\newfont{\smcal}{cmu10 scaled 1200}
\newfont{\handw}{cmmi10 scaled 1200}
\newfont{\handws}{cmmi10 scaled 800}
\newtheorem{Prop}{Proposition}[section]
\newtheorem{Lem}[Prop]{Lemma}
\newtheorem{As}[Prop]{Assumption}
\newtheorem{Th}[Prop]{Theorem}
\newtheorem{Rm}[Prop]{Remark}
\newtheorem{Def}[Prop]{Definition}
\newtheorem{Cor}[Prop]{Corollary}
\newtheorem{Ex}[Prop]{Example}
\newcommand{\Span}{\mbox{\rm span}}
\newcommand{\grad}{\mbox{\rm grad}}
\newcommand{\cov}{\mbox{\rm Cov}}
\newcommand{\rank}{\mbox{\rm rank}}
\newcommand{\fA}{\mathfrak{A}}
\newcommand{\cG}{{\cal G}}
\newcommand{\cN}{{\mathcal{N}}}
\newcommand{\cR}{{\cal R}}
\newcommand{\E}{\mathbb E}
\newcommand{\EE}{\mathbb E}
\newcommand{\RR}{\mathbb R}
\newcommand{\NN}{\mathbb N}
\newcommand{\DD}{\mathbb D}
\newcommand{\SSS}{\mathbb S}
\newcommand{\Prb}{\mathbb P}
\newcommand{\as}{\mbox{\rm ~a.s.}}
\newcommand{\tr}{\mbox{\rm trace}}
\newcommand{\Hess}{\mbox{\rm Hess\,}} 
\DeclareMathOperator*{\argmin}{\mbox{\rm argmin}}
\newcommand{\lw}{\mbox{\handw \symbol{96}}}
\newcommand{\iid}{\operatorname{\stackrel{i.i.d.}{\sim}}}
\newcommand{\twovec}[2]{\begin{pmatrix} #1 \\ #2 \end{pmatrix}}
\newcommand{\twoclass}[2]{\begin{bmatrix} #1 \\ #2 \end{bmatrix}}
\newcommand{\wv}{\widetilde{v}}
\newcommand{\ww}{\widetilde{w}}
\newcommand\commentout[1]{}
\begin{document}
%\title{Random Backward Nested Descriptors Asymptotics with\\ Inference on Early Adult Human Stem Cell Differentiation}
\title{Backward Nested Descriptors Asymptotics with\\ Inference on Stem Cell Differentiation}
%  \title{Flag Asymptotics}
%   \title{Asymptotics for Nested Subspaces}
\author{Stephan F. Huckemann\footnote{Felix-Bernstein-Institut f\"ur Mathematische Statistik in den Biowissenschaften, Georg-August-Universit\"at G\"ottingen} 
~and Benjamin Eltzner$^*$} 
\date{}
\maketitle
% \tableofcontents

\begin{abstract}
  For sequences of random backward nested subspaces as occur, say, in dimension reduction for manifold or stratified space valued data, asymptotic results are derived. In fact, we formulate our results more generally for backward nested families of descriptors (BNFD). Under rather general conditions, asymptotic strong consistency holds. Under additional, still rather general hypotheses, among them existence of a.s. local twice differentiable charts, asymptotic joint normality of a BNFD can be shown. If charts factor suitably, this leads to individual asymptotic normality for the last element, a principal nested mean or a principal nested geodesic, say. It turns out that these results pertain to principal nested spheres (PNS) and principal nested great subsphere (PNGS) analysis by \cite{Jung2010} as well as to the intrinsic mean on a first geodesic principal component (IMo1GPC) for manifolds and Kendall's shape spaces. A nested bootstrap two-sample test is derived and illustrated with simulations. In a study on real data, PNGS is applied to track early human mesenchymal stem cell differentiation over a coarse time grid and, among others, to locate a change point with direct consequences for the design of further studies. 
\end{abstract}

\noindent%
{\it Keywords:} Fr\'echet means, dimension reduction on manifolds, principal nested spheres, asymptotic consistency and normality, geodesic principal component analysis, Kendall's shape spaces, flags of subspaces
%\vfill

\noindent%
{\it AMS Subject Classifications:} Primary 62G20, 62G25. Secondary 62H11, 58C06, 60D05.
\section{Introduction}

In this paper, the novel statistical problem of deriving asymptotic results for nested random sequences of statistical descriptors for data in a non-Euclidean space is considered. 
It can be viewed as a generalization of classical PCA's asymptotics, e.g. by \cite{A63,Wa83,RuYa97}, where, as a consequence of Pythagoras' theorem, nestedness of approximating subspaces is trivially given and thus requires no special attention. For PCA analogs for data in non-Euclidean spaces, due to curvature, nestedness considerably complicates design of descriptors and, to the best knowledge of the authors, has hindered any asymptotic theory to date.

For dimension reduction of non-Euclidean data, \emph{Procrustes analysis} by \cite{Gow} and later \emph{principal geodesic analysis} by \cite{fletch6} are approaches to mimic PCA on shape spaces and Riemannian manifolds, respectively. Both build on the concept of a Fr\'echet mean, a minimizer of expected squared distance, around which classical PCA is conducted for the data mapped to a suitable tangent space. Asymptotics for such means have been subsequently provided, among others, by \cite{Z77, HL96,BP03,BP05,H_Procrustes_10}, allowing for inferential methods such as two-sample tests. Asymptotics for these \emph{tangent space PCA} methods, however, reflecting the \emph{forward nestedness} due to random basepoints (i.e. corresponding means) of tangent spaces with random PCs therein, remain open to date.

Moreover, these tangent space PCA methods are in no way canonical. Not only may statistical outcomes depend on specific choices of tangent space coordinates, more severely, given curvature, no tangent space coordinates can correctly reflect mutual data distances. For this reason, among others, \emph{geodesic principal component analysis} (GPCA) has been introduced by \cite{HZ06,HHM07}, \emph{iterated frame bundle development} by \cite{Sommer2013} and \emph{barycentric subspaces} by \cite{Pennec2015,Pennec2016}. As the following example teaches, nestedness may be lost.
\begin{Ex}\label{intro.ex}
 Consider data on a two-sphere that is confined to its equator and nearly uniformly spread out on it. Then the best $L^2$ approximating geodesic is the equator and far away there are two (due to symmetry) intrinsic Fr\'echet means, each close to one of the poles, see \cite{H_meansmeans_12}. 
\end{Ex}
Let us now detail our ideas, first by elucidating the following.

\textbf{Classical PCA from a geometric perspective.}
Given data on $Q=\RR^m$, for every $0\leq k\leq m$ a unique affine subspace $p^k$ of dimension $k$ is determined by equivalently minimizing residual sums of squares or, among those containing the classical mean $\mu$, maximizing the projected variance. Also equivalently, these subspaces have representations as $p^k = \mu + \Span\{\gamma_1,\ldots,\gamma_k\}$, the affine translates %, through the classical mean $\mu$, 
of spans from an eigenvector decomposition $\gamma_1,\ldots,\gamma_m$ of the data's covariance matrix with descending eigenvalues. In consequence, one may either start from the zero dimensional mean and subsequently add most descriptive dimensions (forward) or start from the full dimensional space and remove least descriptive dimensions (backward) to obtain the same forward and backward nested sequence of subspaces
\begin{align}\label{backward-nested-subspaces:eq}
  \{\mu\}=p^0 \subset p^1\subset \ldots \subset p^m = Q\,.
\end{align} 
For non-Euclidean data, due to failure of Pythagoras' theorem, this canonical decomposition of data variance is no longer possible. For a detailed discussion see \cite{HHM07,Jung2010}. 

\textbf{Nestedness of non-Euclidean PCA} is highly desirable, when due to curvature and data spread, intrinsic Fr\'echet means are away from the data. For instance in Example \ref{intro.ex}, in order to have a mean on the equator, also in this case, \cite{JFM2011} devised \emph{principal arc analysis} with the \emph{backward nested mean} confined to the best approximating circle. This method and its generalization \emph{backward nested sphere analysis} (PNS) by \cite{Jung2010} give a tool for descriptive shape analysis that often strikingly outperforms tangent space PCA, e.g. \cite{PJGZCDHM13}. Here, the data space is a unit sphere $Q=\mathbb S^m$ of dimension $m\in \NN$, say, and in \eqref{backward-nested-subspaces:eq} each of the $p^k$ is a $k$-dimensional (small) subsphere for PNS and for \emph{principal nested great spheres} (PNGS) it is a $k$-dimensional great subsphere. In passing we note that PNS is \emph{higher dimensional} in the sense of having higher dimensional descriptor spaces than classical PCA and PNGS which are equally high dimensional, cf. \cite{HE_LASR15}.

To date, however, there is no asymptotic theory for PNS available, in particular there are no inferential tools for backward nested means, say. Asymptotic results for non-data space valued descriptors, geodesics, say, are only available for single descriptors (cf. \cite{H_ziez_geod_10, H14SemiIntrinsic}) that are directly defined as minimizers, not indirectly as a nested sequence of minimizers.

\textbf{Challenges for and results of this paper.}
It is the objective of this paper to close this gap by providing asymptotic results for rather general random \emph{backward nested families of descriptors} (BNFDs) on rather general spaces. The challenge here is that random objects that are constrained by other random objects are to be investigated, requiring an elaborate 
setup.  Into this setup, we translate strong consistency arguments of \cite{Z77} and \cite{BP03}, and introducing a \emph{constrained} M-estimation technique, we show joint asymptotic normality of an entire BNFD. In the special case of nested subspaces, BNFDs may terminate at any dimension and $p_0 = \{\mu\}$ is not required.

As we minimize a functional under the constraining conditions that other functionals are minimized as well, our approach can be called \emph{constrained M-estimation}. In the literature, this term \emph{constrained M-estimation} has been independently introduced by \cite{KentTyler1996} who robustify M-estimators by introducing constraining conditions and by \cite{Geyer1994, Shapiro2000}, who consider M-estimators that are confined to closed subsets of a Euclidean space with specifically regular boundaries. It seems that our M-estimation problem, which is constrained to satisfying other M-estimation problems has not been dealt with before. We solve it using a random Lagrange multiplier approach.

Furthermore, in order to obtain asymptotic normality of each single sequence element, in particular for the last, we require the rather technical concept of \emph{factoring charts}. Our very general setup will be illustrated, still with some effort, by example of PNS, PNGS and the \emph{intrinsic mean on a first geodesic principal component} (IMo1GPC).

In order to exploit nested asymptotic normality for a \emph{nested two-sample test}, we utilize bootstrapping techniques. While for Fr\'echet means, as they are descriptors assuming values in the data space, one can explicitly model the dependence of the random base points of the tangent spaces as in \cite{HHM09}, so that suitable statistics can be accordingly directly approximated, this modeling and approximation can be avoided using the bootstrap as in \cite{BP05}. For our application at hand, as data space and descriptor space are different, we cannot approximate the distribution of random descriptors and we fall back on the bootstrap.

\textbf{Suggestions for live imaging of stem cell differentiation.}
After illustrations of our nested two-sample test by simulations for PNS and PNGS, we apply it to a cutting edge application in adult human stem cell differentiation research. ``Rooted in a line of experimentation originating in the 1960s'' (from \cite{Biancoetal2013}), the promise that stem cells taken from a patient's bone marrow may be used to rebuild specific, previously lost, patient's tissue is currently undergoing an abundance of clinical trials. Although the underlying mechanisms are, to date, not fully understood, it is common knowledge that early stem cell differentiation is triggered by biomechanical cues, e.g. \cite{ZemelRehfeldBrownDischerSafran2010NatPhys}, which result in specific ordering of the cellular \emph{actin-myosin filament skeleton}. In collaboration with the Third Institute of Physics at the University of G\"ottingen we map fluorescence images of cell structures to two-spheres, where each point stands for a specific ordering. With our 2D PNGS two-sample test we can track the direction of increased ordering over the first 24 hours. We find, however, a consistent reversal of ordering between hours 16 to 20 which hint toward the effect of cell division. This effect suggests that the commonly used time point of 24 hours for fixated hMSCs imaging, e.g. as in \cite{ZemelRehfeldBrownDischerSafran2010NatPhys}, may not be ideal for cell differentiation detection. In fact, our method can be used to direct more elaborate and refined imaging techniques, such as time resolved in-vivo cell imaging, using \cite{EltznerWollnikGottschlichHuckemannRehfeldt2015}, say, to investigate specifically discriminatory time intervals in detail.

\textbf{We conclude our introduction} by noting that our setup of BNFDs has a canonical form on a Riemannian manifold with $p^k$ in (\ref{backward-nested-subspaces:eq}) being a totally geodesic submanifold, not necessarily of codimension one in $p^{k+1}$, however. For example for Kendall's shape spaces $\Sigma_2^j$ which is a complex projective space of real dimension $m=2(j-2)$, cf. \cite{K84}, we have a sequence of
\begin{align}\label{nested-shape-spaces:eq}
  \begin{array}{rcccccccccccccl}
    \{\mu\}&=&p^0 &\subset& p^1&\subset& \Sigma_2^{3}& \subset &\Sigma_{2}^{4}&\subset&\ldots &\subset &\Sigma_2^j &=& Q\\
    &&&1&&1&&2&&2&&2
  \end{array}
\end{align}
where the numbers below the inclusions denote the corresponding co-dimensions. 

More generally, we believe that our setup can be generalized to Riemann stratified spaces. For example, (\ref{nested-shape-spaces:eq}) generalizes at once to $\Sigma_r^j$ (the shape space of $r$-dimensional $j$ landmark configurations which has dimension $r(j-1)-1 - r(r-1)/2$) with $3\leq r\leq j-1$, cf. \cite{KBCL99}, now with
\begin{align*}
  \begin{array}{rcccccccccccccl}
    \{\mu\}&=&p^0 &\subset& p^1&\subset& \Sigma_r^{r+1}& \subset &\Sigma_{r}^{r+2}&\subset&\ldots &\subset &\Sigma_r^j &=& Q\,.\\
    &&&1&&\frac{r(r+1)}{2}-2&&r&&r&&r
  \end{array}
\end{align*}
Indeed, in Section \ref{scn:Kendalls-shape-spaces} we illustrate the generalization to the sequence $\{\mu\}=p^0 \subset p^1\subset p^2=\Sigma_r^j$, giving the %{\it intrinsic mean on a first geodesic principal component} (
IMo1GPC for arbitrary $\Sigma_r^j$. Our setup may also generalize to phylogenetic tree spaces as introduced by \cite{BilleraHolmesVogtmann2001}, cf. also \cite{BardenLeOwen2013}, or torus-PCA and the more general polysphere-PCA, cf. \cite{EltznerHuckemannMardia15},\cite{EltznerHuckemannJung15}. Moreover, our setup may be applied to flags of barycentric subspaces as introduced by \cite{Pennec2016}. 

\textbf{Our paper is organized as follows.} In the following section we introduce the abstract setup of BNFDs and show that the essential assumptions are fulfilled for PNS, PNGS and IMo1GPCs for Riemannian manifolds and Kendall's shape spaces. In the section to follow we will develop a set of assumptions necessary for the main results on asymptotic strong consistency and normality which are stated in Section \ref{main-results:scn}. Also in Section \ref{main-results:scn}, we give our nested bootstrap two-sample test. The elaborate proof of asymptotic strong consistency is deferred to the Appendix. In Section \ref{application:scn} we show simulations and our applications to stem cell differentiation.

% \bibliographystyle{Chicago}
% \bibliography{superbib}
% % \bibliographystyle{../../../BIB/Chicago}
% % \bibliography{../../BIB/diffgeo,../../BIB/circular,../../BIB/stats,../../BIB/trees,../../BIB/machine,../../BIB/numerics,../../BIB/finger,../../BIB/shape,../../BIB/complex,../../BIB/zytos,../../BIB/comp_vis}
% \end{document}

\section{Backward Nested Families of Descriptors}\label{setup:scn}

In this section we first introduce the general framework including the fundamental assumption of \emph{factoring charts} which is essential to prove asymptotic normality of single nested descriptors in Section \ref{main-results:scn}. Then we give examples: the intrinsic mean on a first geodesic principal component (IMo1GPC) for Riemannian manifolds, principal nested spheres (PNS) as well as principal nested great spheres (PNGS) and finally we give an example for the IMo1GPCs also on non-manifold Kendall's shape spaces. The first example is rather straightforward, the last is slightly more involved and the second and third are much more involved. The differential geometry used here can be found in any standard textbook, e.g. \cite{Lee2013}.

First, let us quickly sketch the ideas in case of IMo1GPCs on a Riemannian manifold $Q$. There, we have the space $P_1$ of point sets of geodesics on $Q$ which is the first non-trivial descriptor space ``below'' the space $P_2=\{Q\}$. In order to show strong asymptotic consistency in Theorem \ref{SC:thm}, on $P_1\times P_1$ we require the concept of a \emph{loss function} $d_1$ that has some properties of a distance between two (point sets of) geodesics. In order to model nestedness, given a geodesic $p\in P_1$ we require the set $S_p$ of lower dimensional descriptors in $P_0 = Q$ which lie on $p$. These are the candidate nested means on $p$, and in this case, $S_p=p$. Further, we need the data projection $\pi_{Q,p}:Q\to p$ and we measure the distance $\rho\big(\pi_{Q,p}(q),s\big)$ of the projected data to a candidate nested mean $s \in S_p$. Then every  $(Q,p,s)$ with $p\in P_1,s\in S_p$ will be a \emph{backward nested family of descriptors} (BNFD) and the set $(p,s)$ with $p\in P_1,s\in S_p$ carries a natural manifold structure. It is the objective of \emph{factoring charts} to represent this manifold locally as a direct product of arbitrary variable offsets $s\in P_0=Q$ times a suitable space parametrizing directions of geodesics, parametrized independently from the offset $s\in Q$, cf. Figure \ref{chartfactoring:fig}. We will see that is precisely the geometry of the projective bundle. Once we establish asymptotic normality of the backward nested descriptor $(p,s)$, asymptotic normality follows at once also for $s$, because, under factoring charts, $s$ is given by some coordinates of a Gaussian vector as reasoned in the proof of Theorem \ref{CLT:thm}.

\subsection{General Framework}\label{general_framework:scn}

With a silently underlying probability space $(\Omega,\fA,\Prb)$, \emph{random elements} on a topological space $Q$ are mappings $X:\Omega \to Q$ that are measurable with respect to the Borel $\sigma$-algebra of $Q$. In the following, \emph{smooth} refers to existing continuous 2nd order derivatives. 

For a topological space $Q$ we say that a continuous function $d:Q\times Q\to [0,\infty)$ is a \emph{loss function} if $d(q,q') =0$ if and only if $q=q'$. We say that a set $A\subset Q$ is $d$-bounded if $\sup_{a,a'\in A}d(a,a') <\infty$. Moreover, we say that $B\subset Q$ is \emph{$d$-Heine Borel} if all closed $d$-bounded subsets of $B$ are compact.

\begin{Def}\label{BNFD:def}
  A separable topological space $Q$, called the \emph{data space}, admits \emph{backward nested families of descriptors} (BNFDs) if
  \begin{enumerate}[(i)]
    \item there is a collection $P_j$ ($j=0,\ldots,m$) of topological separable spaces with loss functions $d_j : P_j\times P_j \to [0,\infty)$;
    \item $P_m = \{Q\}$; 
    \item every $p\in P_j$ ($j=1,\ldots,m$) is itself a topological space and gives rise to a topological space $\emptyset \neq S_p \subset P_{j-1}$ which comes with a continuous map
    \begin{align*}
      \rho_p:p\times S_p \to [0,\infty)\,;
    \end{align*}
    \item for every pair $p\in P_j$ ($j=1,\ldots,m$) and $s\in S_p$ there is a measurable map called \emph{projection}
    \begin{align*}
      \pi_{p,s}: p \to s\,.
    \end{align*}
  \end{enumerate}
  For $j\in \{1,\ldots,m\}$ and $k \in \{1,\ldots,j\}$ call a family
  \begin{align*}
    f = \{p^{j},\ldots,p^{j-k}\},\mbox{ with }p^{l-1} \in S_{p^{l}}, l=j-k+1,\ldots,j
  \end{align*}
  a \emph{backward nested family of descriptors (BNFD) from $P_j$ to $P_{j-k}$}. The space of all BNFDs from $P_j$ to $P_{j-k}$ is given by
  \begin{align*}
    T_{j,k} = \left\{f=\{p^{j-l}\}_{l=0}^k: p^{l-1} \in S_{p^{l}}, l=j-k+1,\ldots,j\right\} \subseteq \prod_{l=0}^k \limits P_{j-l}\,.
  \end{align*}
  For $k\in \{1,\ldots,m\}$, given a BNFD $f = \{p^{m-l}\}_{l=0}^{k}$ set
  \begin{align*}
    \pi_{f} = \pi_{p^{m-k+1},p^{m-k}} \circ \ldots \circ \pi_{p^m,p^{m-1}} : p^m \to p^{m-k}\,
  \end{align*}
  which projects along each descriptor.
  For another BNFD $f' = \{{p'}^{j-l}\}_{l=0}^{k}\in T_{j,k}$ %of the same length $k$ 
  set
  \begin{align*}
    d^j(f,f') = \sqrt{\sum_{l=0}^{k} d_j(p^{j-l},{p'}^{j-l})^2}\,.
  \end{align*}
\end{Def}

In case of PNS, the nested projection $\pi_f$ is illustrated in Figure \ref{pns_illustration:fig} (a).

\begin{Def}
  Random elements $X_1,\ldots,X_n \iid X$ on a data space $Q$ admitting BNFDs give rise to \emph{backward nested population} and \emph{sample means} (abbreviated as BN means)
  \begin{align*}
    \{E^{f^{j}}:j=m,\ldots,0\},\quad \{E^{f_n^{j}}_n:j=m,\ldots,0\}
  \end{align*}
  recursively defined via $E^m = \{Q\} = E^m_n$, i.e. $p^m = Q= p_n^m$ and
  \begin{align*}
    E^{f^{j-1}}&=\argmin_{s\in S_{p^{j}}}\E[\rho_{p^{j}}(\pi_{f^{j}}\circ X,s)^2], & f^j &= \{p^k\}_{k=j}^{m} \\
    E^{f_n^{j-1}}_n &=\argmin_{s\in S_{p^{j}_n}} \sum_{i=1}^n\rho_{p_n^{j}}(\pi_{f_n^{j}}\circ X_i,s)^2, & f_n^j &= \{p_n^k\}_{k=j}^{m} \,.
  \end{align*}
  where $p^{j} \in E^{f^{j}}$ and $p^{j}_n \in E^{f_n^{j}}$ is a measurable choice for $j=1,\ldots,m$.
  
  We say that a BNFD $f=\{p^k\}_{k=0}^{m}$ gives \emph{unique} BN population means if $E^{f^{j}} = \{p^{j}\}$ with $f^j= \{p^k\}_{k=j}^{m}$ for all $j=0,\ldots,m$. 
  
  Each of the $E^{f^{j-1}}$ and $E^{f_n^{j-1}}_n$ is also called a \emph{generalized Fr\'echet mean}.
\end{Def}

Note that by definition there is only one $p^m = Q\in P_m$. For this reason, for notational simplicity, we ignore it from now on and begin all BNFDs with $p^{m-1}$ and consider thus the corresponding $T_{m-1,k}$.

\begin{Def}[Factoring Charts]\label{def:factoring}
  Let $j\in\{0,\ldots,m-1\}, k\in \{1,\ldots,j\}$. If $T_{j,k}$ and $P^{j-k}$ carry smooth manifold structures near ${f'} = ( {p'}^{j},\ldots, {p'}^{j-k}) \in T_{j,k}$ and ${p'}^{j-k}\in P^{j-k}$, respectively, with open $W\subset T_{j,k}$, $U\subset P^{j-k}$ such that ${f'} \in W$, ${p'}^{j-k}\in U$, and with local charts 
  \begin{align*}
    \psi: W\to \RR^{\dim(W)},~f=({p}^{j},\ldots, {p}^{j-k})\mapsto \eta = (\theta,\xi),\quad \phi: U\to\RR^{\dim(U)},~p^{j-k}\mapsto \theta\,
  \end{align*}
  we say that the \emph{chart $\psi$ factors}, cf. Figure \ref{chartfactoring:fig} (a) and (b), if with the projections
  \begin{align*}
    \pi^{P^{j-k}}: T_{j,k}\to P^{j-k},~f \mapsto p^{j-k},\quad \pi^{\RR^{\dim(U)}} : \RR^{\dim(W)} \to \RR^{\dim(U)},~(\theta,\xi) \mapsto \theta
  \end{align*}
  we have
  \begin{align*}
    \phi\circ \pi^{P^{j-k}}|_W = \pi^{\RR^{\dim(U)}}|_{\psi(W)}\circ \psi\,.
  \end{align*}
\end{Def}

\begin{figure}[ht!]
  \centering
  $\vcenter{\hbox{\subcaptionbox{BNFD}[0.27\textwidth]{\includegraphics[width=0.27\textwidth, clip=true, trim=8cm 4cm 3.5cm 1.5cm]{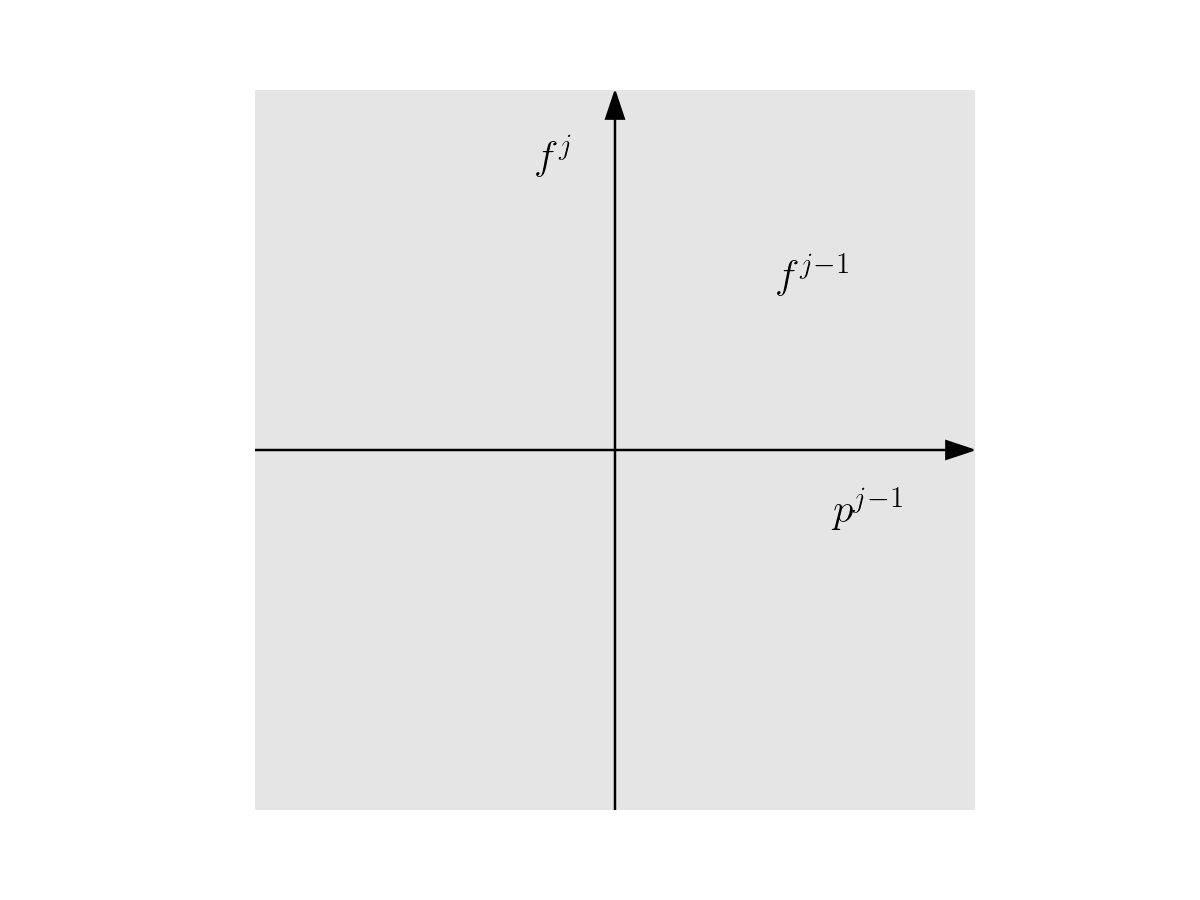}}}}
  \scalebox{2}{$\stackrel{\psi}{\to}$}~~ % \hspace*{0.02\textwidth}
  \vcenter{\hbox{\subcaptionbox{Coordinates}[0.27\textwidth]{\includegraphics[width=0.27\textwidth, clip=true, trim=8cm 4cm 3.5cm 1.5cm]{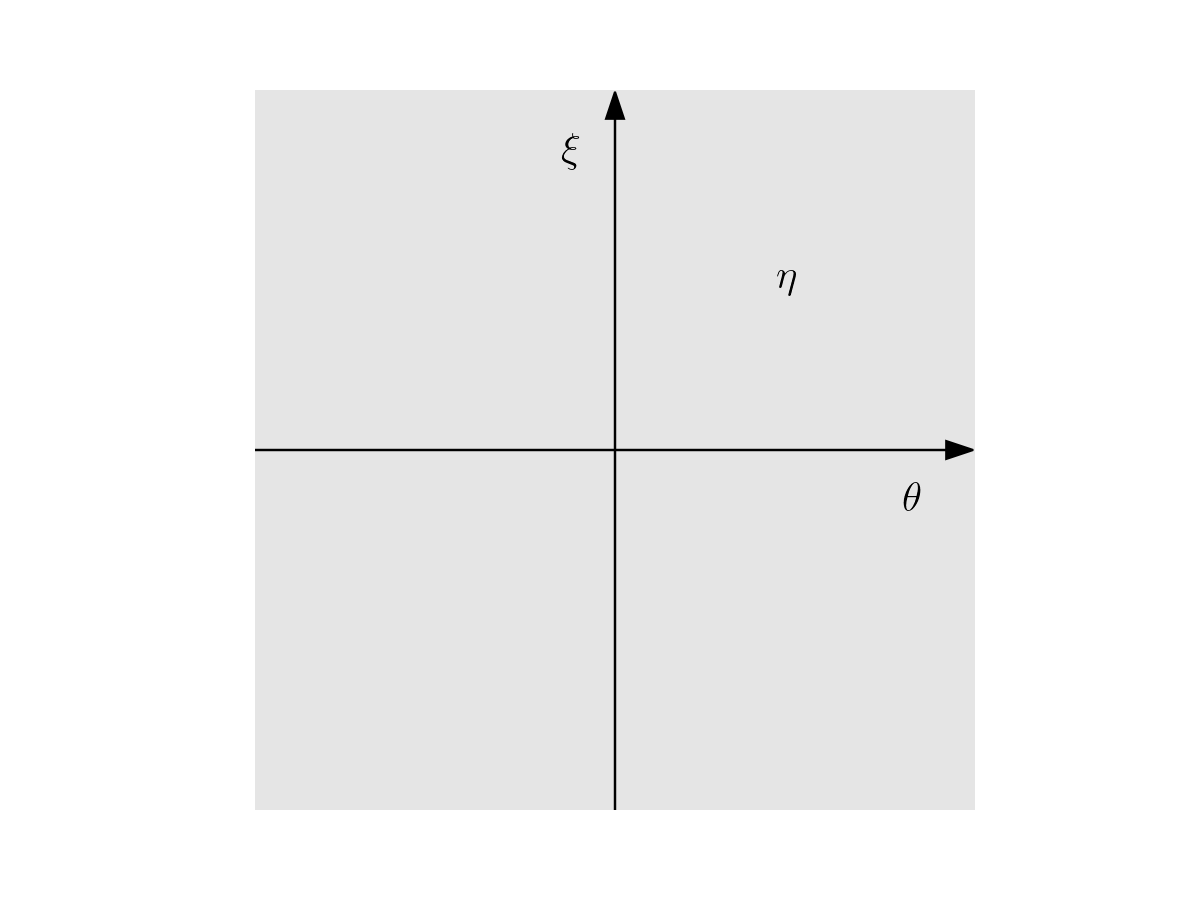}}}}
  \hspace*{0.02\textwidth}
  \vcenter{\hbox{\subcaptionbox{Projective bundle}[0.35\textwidth]{\includegraphics[width=0.35\textwidth, clip=true, trim=4cm 2.5cm 3cm 1.5cm]{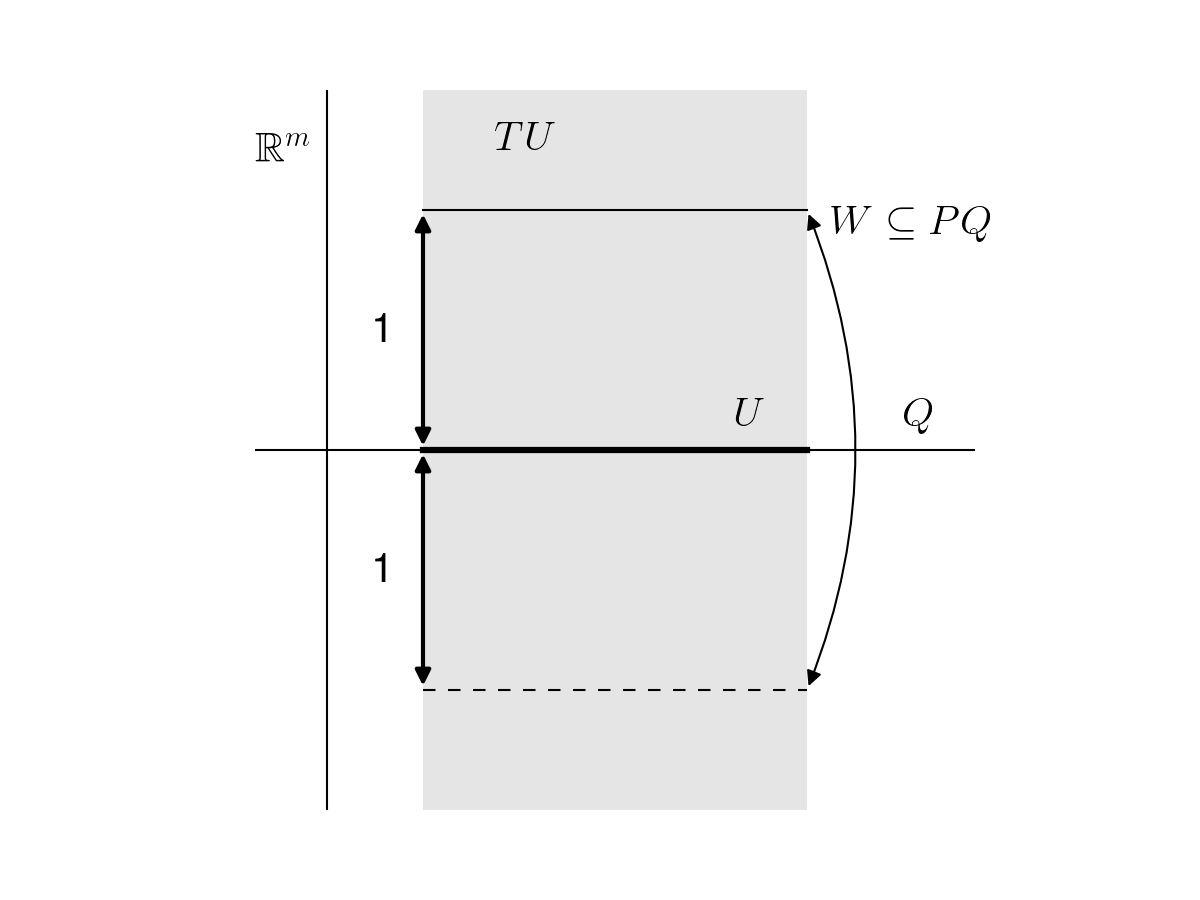}}}}$
  \caption{\it Factoring charts.\label{chartfactoring:fig}}
\end{figure}

% \textbf{Intrinsic means.} Finally let us note that in case of nested subspaces with $Q=P_0$ there are also \emph{intrinsic population means} and  \emph{intrinsic sample means} (e.g. \cite{HL96,BP03}), respectively, if existent, determined by
% \begin{align*}
%   \argmin_{q\in Q}\EE\big[d_0(q,X)^2\big]\mbox{ and } \argmin_{q\in Q}\sum_{k=1}^n d_0(q,X_k\big)^2\,.
% \end{align*}

\subsection{Intrinsic Mean on a First Principal Component Geodesic for Manifolds}\label{IMo1GPC:scn}

Suppose that $X_1,\ldots,X_n \sim X$ are random variables assuming values on a Riemannian manifold $Q$ with Riemannian norm $\|\cdot\|$ for the tangent spaces $T_qQ$ ($q\in Q$), induced metric $d:Q\times Q \to [0,\infty)$, \emph{projective tangent bundle} 
$PQ = \{(q, \{v,-v\}): q \in Q, v\in T_qQ, \|v\| = 1\}$ and space of classes of geodesics given by their point sets
\begin{align*}
  P_1 = \{[\gamma_{q,v}]: (q,\{v,-v\})\in PQ\}, ~ [\gamma_{q,v}] = \{\gamma_{r,w}: \gamma_{q,v}(t) = r,~ \dot\gamma_{q,v}(t)= w\mbox{ for some }t\}
\end{align*}
where $t\mapsto \gamma_{q,v}(t)$ denotes the unique maximal geodesic through $q=\gamma_{q,v}(0)$ with unit speed velocity $v=\dot\gamma_{q,v}(0)$, $\|v\|=1$. 
Then consider
\begin{align*}
  P_2 = \{Q\},\quad S_Q =P_1,\quad P_0 = Q\,.
\end{align*}

There is a well defined distance between a point $s\in Q$ and a class of geodesics determined by
\begin{align*}
  \rho_Q: Q\times P_1 \to [0,\infty),~\big(s,[\gamma_{q,v}]\big) \mapsto \inf_{t}d\big(s,\gamma_{q,v}(t))\,.
\end{align*}
Then every class of geodesics determined by
\begin{align*}
   \argmin_{(q,v)\in TQ}\EE\Big[\rho\big(X,[\gamma_{q,v}]\big)^2\Big]\mbox{ or }\argmin_{(q,v)\in TQ}\sum_{k=1}^n \rho\big(X_k,[\gamma_{q,v}]\big)^2
\end{align*}
is called a \emph{first population principal component geodesic} or a \emph{first sample principal component geodesic}, respectively, cf. \cite{HZ06}. 

Moreover, given a first population principal component geodesic $p=[\gamma_{q,v}]$ and a first sample principal component geodesic $p_n=[\gamma_{q_n,v_n}]$, with the orthogonal projection
\begin{align*}
  \pi_{Q,p} : Q \to p = [\gamma_{q,v}], q'\mapsto \argmin_{\gamma_{q,v}(t)} d(q',\gamma_{q,v}(t))
\end{align*}
which is well defined outside a set of zero Riemannian volume, e.g. \cite[Theorem 2.6]{HHM07}, we have the \emph{intrinsic population means on $p$} and  \emph{intrinsic sample means on $p_n$}  determined by
\begin{align*}
  \argmin_{\gamma_{q,v}(t)}\EE\Big[\rho_{p_n}\big(\pi_{Q,p_n}\circ X,\gamma_{q,v}(t)\big)^2\Big]\mbox{ or } \argmin_{\gamma_{q,v}(t)}\sum_{k=1}^n \rho_p\big(\pi_{Q,p}\circ X_k,\gamma_{q,v}(t)\big)^2\,,
\end{align*}
respectively, where $\rho_p(q,q') = d(q,q')$ for $q,q'$ in $p$. In particular, we have the space of \emph{backward nested descriptors}
\begin{align*}
  T_{1,1} = \{(p,s): p = [\gamma_{q,v}] \in P_1,~ s \in p\}
\end{align*}
which carries the natural manifold structure of the projective tangent bundle $PQ$ conveyed by the identity 
\begin{align}\label{eq:factoring-GPCA}
  T_{1,1} \to PQ,~([\gamma_{q,v}],s) \mapsto (s,\{w,-w\})
\end{align}
where $w= \dot\gamma_{q,v}(t)$, $\|w\|=1$, if $s=\gamma_{q,v}(t)$.

Recall that the tangent bundle $TQ= \{(q,v): q\in Q, v\in T_qQ\}$ admits \emph{local trivializations}, i.e. every $q\in Q$ has a local neighborhood $U\subset Q$ with a smooth one-to-one mapping
\begin{align*}
  \tau = (\tau_1,\tau_2): TU \to U \times \RR^{\dim(Q)}\,
\end{align*}
where the first coordinate satisfies $\tau_1(q',v') = q'$ for all $v'\in T_{q'}Q$, $q'\in U$ and the second coordinate $\tau_2$ is a vector space isomorphism.
In consequence, for a given $(q,v) \in PQ$, with local charts $\phi: U \to \RR^{\dim(Q)}$ of $Q$ around $q$, and $\chi : H \to  \RR^{\dim(Q)-1}$ of the real projective space $P\RR^{\dim(Q)-1}$ of dimension ${\dim(Q)}-1$ around $\{\tau_2(q,v), - \tau_2(q,v)\}\in H \subset \RR^{\dim(Q)-1}$, $H$ open, and the open set
\begin{align*}
  W =\Big \{\big(q',\{v',-v'\}\big): (q',v') \in PU, \big\{\tau_2(q',v'), - \tau_2(q',v')\big\}\in H \Big\}\subset PQ\,,
\end{align*}
the mapping
\begin{align*}
  \psi: W \to \RR^{\dim(Q)} \times \RR^{\dim(Q)-1},~~\big(q',\{v',-v'\}\big) \mapsto \Big(\phi(q'),\chi\big\{\tau_2(q',v'), - \tau_2(q',v')\big\}\Big)
\end{align*}
yields a local chart that factors as in Definition \ref{def:factoring}. This scenario is sketched in Figure \ref{chartfactoring:fig} (c).

\subsection{The Geometric Framework of PNS and PNGS}\label{sec:PNS-Framework}
The (nested) projections detailed below are illustrated in Figure \ref{pns_illustration:fig}.

\textbf{Notation.}
Consider the standard inner product $\langle \cdot,\cdot \rangle$ on $\RR^{m+1}$ with norm $\|x\| = \langle x,x\rangle^{1/2}$ and the $m$-dimensional unit sphere $Q=\SSS^m = \{x\in \RR^{m+1}:\|x\| = 1\}$, with interior $\mathbb D^{m+1} = \{x\in \mathbb R^{m+1}: \|x\| <1\}$. For any matrix $v=(v_1,\ldots,v_k ) \in \RR^{m\times k}$ we have the Frobenius norm $\|v\| = \sqrt{\sum_{j=1}^k \|v_j\|^2}$ and the inner product $\langle v,w\rangle = \tr(vw^T)$ for $v,w\in \RR^{n\times k}$. The $k\times k$ dimensional unit matrix is $I_k$ and $O(k) = \{R\in \RR^{k\times k}: R^TR = I_k\}$ is the orthogonal group. Moreover, e.g. \cite[p. 1]{James1976},
\begin{align*}
  O(k,m) = \Big\{v=(v_1,\ldots,v_{k}) \in \RR^{m\times k}: \langle v_i,v_k\rangle = \delta_{ik},\quad 1\leq i\leq j\leq k\Big\}\cong O(m)/O(m-k)\,,
\end{align*}
denotes the Stiefel manifold of orthonormal $k$-frames in $\RR^m$. For every such orthonormal $k$-frame we have a non-unique \emph{orthonormal complement} $\wv \in O(m-k,m)$ such that $(v,\wv) \in O(m)$.

In PNS and PNGS, for a top sphere $Q=\SSS^m$ a sequence of nested subspheres is sought for. Only in the scenario of PNGS it is required that each subsphere is a great subsphere. In general, every $j$-dimensional subsphere ($j=1,\ldots,m-1$) is the intersection of a $j+1$ dimensional affine subspace of $\RR^{m+1}$ with $\SSS^m$. Recall that every $j+1$ dimensional affine subspace
\begin{align*}
  A = \{x \in \RR^{m+1}: \langle x,v_k\rangle = \alpha_k,~k=1,\ldots,m-j\}
\end{align*}
is determined by a matrix
\begin{align*}
  v=(v_1,\ldots,v_{m-j})
\end{align*}
of $m+1 - (j+1) = m-j$ orthonormal column vectors 
that are orthogonal to $A$ and a vector of signed distances from the origin
\begin{align*}
  \alpha = (\alpha_1,\ldots,\alpha_{m-j})^T\,.
\end{align*}
In particular $\|\alpha\|<1$ ensures that $A$ intersects with $\SSS^m$ in a $j$ dimensional subsphere. Obviously, $A$ determines $v$ and $\alpha$ up to an action of $ O(m-j)$, i.e. with every $R \in O(m-j)$, $vR$ and $R^T \alpha$ determine the same $A$. 

In PNGS only great subspheres are allowed as intersections. Hence all affine spaces under consideration above pass through the origin, i.e. $\alpha = 0$ above. 

\textbf{The parameter space.}
In consequence we have that the family of $j$-dimensional subspheres ($j=1,\ldots,m-1)$ is given by the smooth manifold
\begin{align*}
  P_j = M_j/O(m-j) = \left\{[z]: z \in M_j\right\}\mbox{ with } [z] = \left\{z R: R\in O(m-j)\right\}
\end{align*}
where
\begin{align*}
  M_j &\:=\: \left\{\begin{array}{ll} O(m-j,m+1)\times \mathbb D^{m-j} = \left\{\twovec{v}{\alpha^T}: v\in O(m-j,m+1),\alpha \in \DD^{m-j}\right\}&\mbox{ for PNS}\\
  O(m-j,m+1)\times \{0\}&\mbox{ for PNGS}\end{array}\right.\, 
\end{align*}
with $\{0\}\subset \RR^{m-j}$ above, for compatibility, becoming clear in the considerations below.
Indeed, the smooth action from the right of the compact Lie group $O(m-j)$ on $M_j$ is free (for $v\in O(m-j,m+1)$ and $R\in O(m-j)$, $vR=v$ implies that $R=I_{m-j}$), giving rise to a smooth quotient manifold, e.g. \cite[Theorem 7.10]{Lee2013}, of dimension $(j+2)(m-j)$ for PNS and of dimension $(j+1)(m-j)$ for PNGS. Notably, in the latter case, $P_j$ is just a Grassmannian.

In the above setup, we had excluded the cases $j=0,m$. For $j=m$, for PNS and PNGS it is natural to set, as in Definition \ref{BNFD:def},
\begin{align*}
  P_m = \{\SSS^m\}\,.
\end{align*}
In case of $j=0$, the setup above would yield pairs of points (the intersections of suitable lines with $\SSS^m$ give topologically zero-dimensional spheres $\SSS^0$). In order to have a single nested mean as a zero dimensional descriptor only, for PNS and PNGS (in order to represent nestedness) we use the convention 
\begin{align*}
  P_0 = \SSS^m\mbox{ i.e. } M_0 = O(m,m+1)\times \SSS^{m-1}\mbox{ with }\SSS^{m-1} = \{\alpha \in \RR^{m}:\|\alpha\|=1\}\,.
\end{align*}

\textbf{Distance between subspheres of equal dimension.} For PNS and PNGS, on $M_j$ we have the \emph{extrinsic metric}
\begin{align*}
  \left(z,z'\right) \mapsto \| z - z'\| \,.
\end{align*}
Since the extrinsic metric is invariant under the action of $O(m-j)$ it gives rise to the well defined quotient metric, called the \emph{Ziezold metric}, cf. \cite{Z94}) on $P_j$ given by 
\begin{align}\label{extrinsic-metric:eq}
  d_j(p,p') = \min_{R\in O(m-j)} \| z-z'R\|
\end{align}
for arbitrary representatives $z,z' \in M_j$ of $p,p'\in P_j$, respectively, as the following Lemma teaches.

\begin{Lem}\label{lem:Ziezold-metric} The mapping $d_j: P_j \to P_j \to [0,\infty)$ satisfies the triangle inequality and it is definite, i.e. $d_j(p,p') = 0$ implies $p=p'$.
\end{Lem}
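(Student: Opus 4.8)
The plan is to recognise $d_j$ as the quotient pseudometric induced on $P_j = M_j/O(m-j)$ by the Frobenius metric on $M_j$ under the isometric right action $z\mapsto zR$, and then to check the three asserted properties directly, the only subtle point being that compactness of $O(m-j)$ turns the defining infimum into a genuine minimum. First I would record that right multiplication by $R\in O(m-j)$ preserves the Frobenius norm, since $\|AR\|^2 = \tr(ARR^TA^T) = \tr(AA^T) = \|A\|^2$ because $RR^T = I_{m-j}$; hence $z\mapsto zR$ is an isometry of $M_j$ (and maps $M_j$ into itself, as already noted before the statement), and $R\mapsto\|z-z'R\|$ is continuous on the compact group $O(m-j)$, so the minimum in \eqref{extrinsic-metric:eq} is attained. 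Well-definedness then follows: for other representatives $z_1 = zR_1$, $z_1' = z'R_1'$ of $p,p'$, multiplying on the right by the isometry $R_1^{-1}$ gives $\|z_1 - z_1'R\| = \|z - z'(R_1'RR_1^{-1})\|$, and as $R$ runs over $O(m-j)$ so does $R_1'RR_1^{-1}$, so the minimum is unchanged.

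For symmetry, applying the isometry $R^{-1}$ gives $\|z - z'R\| = \|zR^{-1} - z'\|$, and letting $R$ range yields $d_j(p,p') = d_j(p',p)$. For the triangle inequality, take representatives $z,z',z''$ of $p,p',p''$ and minimisers $R_1,R_2$ with $d_j(p,p') = \|z - z'R_1\|$ and $d_j(p',p'') = \|z' - z''R_2\|$; choosing $R = R_2R_1$ in the definition of $d_j(p,p'')$ and inserting $\pm\,z'R_1$,
\[
d_j(p,p'') \;\le\; \|z - z''R_2R_1\| \;\le\; \|z - z'R_1\| + \|(z' - z''R_2)R_1\| \;=\; d_j(p,p') + d_j(p',p'')\,,
\]
using once more that right multiplication by $R_1$ is an isometry. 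Finally, for definiteness, if $d_j(p,p') = 0$ then, the minimum being attained, there is $R\in O(m-j)$ with $\|z - z'R\| = 0$, i.e.\ $z = z'R$, so $z$ and $z'$ lie in the same $O(m-j)$-orbit and $p = [z] = [z'] = p'$.

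I do not expect any genuine obstacle here; this is the standard quotient-metric argument for an isometric action of a compact group. The one place that merits a sentence of care is definiteness: it relies on the infimum in \eqref{extrinsic-metric:eq} being attained, which is exactly where compactness of $O(m-j)$ enters (equivalently, one could note that each orbit $\{zR:R\in O(m-j)\}$, being the continuous image of a compact group, is closed, so vanishing quotient distance forces the two representatives into one orbit).
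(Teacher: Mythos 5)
Your proof is correct. For the triangle inequality the approach is essentially the same as the paper's: the paper chooses representatives already in optimal position (w.l.o.g.\ $d_j(p,p')=\|z-z'\|$ and $d_j(p,p'')=\|z-z''\|$) and invokes the ordinary triangle inequality, whereas you keep the optimal rotations $R_1,R_2$ explicit and compose them; this is just bookkeeping, not a different idea. Where you genuinely diverge is definiteness. The paper expands $\|z-z'R\|^2 = 2(m-j) + \|\alpha\|^2 + \|\alpha'\|^2 - 2\tr(v^Tv'R) - 2\tr(\alpha^T R^T \alpha')$, bounds the two trace terms by $m-j$ and $\|\alpha\|\,\|\alpha'\|$ respectively, and characterises the equality case to conclude $z=z'R$. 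You instead observe that $R\mapsto\|z-z'R\|$ is continuous on the compact group $O(m-j)$, so the infimum is attained; vanishing of the minimum then forces $z=z'R$ directly. Your route is shorter, does not use the block structure of $M_j$ at all, and generalises verbatim to any isometric action of a compact group, while the paper's explicit expansion is the kind of computation it re-uses later (e.g.\ in Lemma \ref{lem:smooth-op} and the verification of Assumption \ref{projection-unique:as}), so the extra work is not wasted there. You also spell out well-definedness (independence of representatives) and symmetry, both of which the paper treats as implicit in calling $d_j$ a ``quotient metric''; that is a reasonable addition for completeness.
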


\begin{proof}
 Suppose that $z=(v^T,\alpha)^T,z'=({v'}^T,\alpha')^T,z''\in M_j$ are representatives of $p,p',p'' \in P_j$ with the property, w.l.o.g., that $d_j(p,p') = \|z-z'\|, d_j(p,p'') = \|z-z''\|$. Then the usual triangle inequality yields
\begin{align*}
   d_j(p',p') \leq \|z'-z''\| \leq \|z'-z\| +\|z-z'\| = d_j(p',p)+d_j(p,p'')\,.
\end{align*}

Moreover, we have 
\begin{align*}
\|z-z'R\|^2 = 2(m-j) + \|\alpha\|^2 + \|\alpha'\|^2 - 2\tr(v^Tv'R) - 2\tr(\alpha^TR^T\alpha')
\end{align*}
where $\tr(v^Tv'R) \leq m-j$ and  $\tr(\alpha^TR^T\alpha') \leq \|\alpha\|\,\|\alpha'\|$ with equality if and only if $v =v'R$ and $\alpha = c R^T\alpha'$ with $c\geq 0$, yielding that the above vanishes if and only if this is the case with $c=1$.
\end{proof}

\textbf{Backwards nesting.} In PNS, the space of subspheres $S_p \subset P_{j-1}$ within a given subsphere $p = [v^T,\alpha]^T\in P_j$ ($j=2,\ldots,m$) can be given the following structure.
\begin{align*}
  S_p &= \left\{\twoclass{v,v_{m-j+1}}{\alpha^T,\alpha_{m-j+1}}: v_{m-j+1} \in \SSS^{m},\alpha_{m-j+1}\in\RR\mbox{ with } v^T_{m-j+1}v=0, \alpha_{m-j+1}^2<1-\|\alpha\|^2\right\}\\
  &\subset P_{j-1}\,.
\end{align*}
Indeed, if $(R^Tv^T,R^T\alpha)^T$, $R\in O(m-j)$ is another representative for $p$, then $ v^T_{m-j+1}v=0$ if and only if $ v^T_{m-j+1}vR=0$.
In case of PNGS, the condition on entries of $\alpha$ above is simply $\alpha_{m-j+1}=0$ because $\alpha =0$.

According to our convention of $P_0=\SSS^m$, in case of $j=1$ the inequality above needs to be replaced with $\alpha_{m-j+1}^2=1-\|\alpha\|^2$ (for PNS) and with $\alpha_{m-j+1}^2=1$ for (PNGS).

\textbf{Projections.} For all $j=0,\ldots,m-1$ we have the intrinsic orthogonal projection onto $p = [v^T,\alpha]^T \in P_j$
\begin{align}\label{eq:blow-down}
  \pi_{\SSS^m,p^j} : \SSS^m \to p^j,\quad q\mapsto y=v\alpha + \sqrt{1-\|\alpha\|^2} ~\frac{(I_{m+1}-vv^T)q}{\|(I_{m+1}-vv^T)q\| }= v\alpha + \sqrt{1-\|\alpha\|^2}\,\frac{\wv\wv^Tq}{\|\wv^Tq\|}
\end{align}
% \begin{align*}
%   \pi_{\SSS^m,p}: \SSS^m &\to p,\quad 
%   q \mapsto v\alpha + \sqrt{1-\|\alpha\|^2} ~\frac{(I_{m+1}-vv^T)q}{\|(I_{m+1}-vv^T)q\| }
% \end{align*}
which is independent of the representative $(v^T,\alpha)^T$ chosen and independent of the specific orthogonal complement $\wv \in O(j+1,m+1)$ of $v$ chosen; and it is well defined except for a set
\begin{align*}
  \left\{q\in \SSS^m : q = \sum_{i=1}^{m-j}\langle q,v_i\rangle v_i\right\}
\end{align*}
of spherical measure zero. Note that we have $\|\alpha\| =1$ for $j=0$ and hence the constant mapping $y\mapsto v\alpha$.

\textbf{Nested projections.} 
More generally, if $p^j,p^{j'}$ are from a family of backward nested subspheres $\SSS^m \supset p^{m-1}\supset \ldots \supset p^{1} \supset p^{0}$, $0\leq j'<j\leq m-1$, we may choose representatives $(v^T,\alpha)^T$ of $p^j$ and $({v'}^T,\alpha')^T$ of $p^{j'}$ such that
\begin{align*}%\label{proof-PNS-BNDS:eq1}
\arraycolsep=1.4pt
   \left.
  \begin{array}{rl@{\qquad}rl}v &= (v_1,\ldots,v_{m-j}),& v' &= (v_1,\ldots,v_{m-j'})\\
  \alpha^T &= (\alpha_1,\ldots,\alpha_{m-j}),& {\alpha'}^T &= (\alpha_1,\ldots,\alpha_{m-j'})\end{array}\right\}\,.
\end{align*}
 With (\ref{eq:blow-down}) and the arbitrary but fixed complement $\wv$ of $v$ chosen,
embed $p^j$ in $\RR^{j+1}$ (first a translation, possible in PNS, and then a blow up), depending on the specific choice of $\wv$, via
\begin{align}\label{eq:blow-up}
  g_{p^j,\SSS^j} : p^j \to \SSS^j,\quad y\mapsto z=\frac{\wv^Ty}{\|\wv^Ty\|}= \frac{\wv^Tq}{\|\wv^Tq\|}\mbox{ for $y=\pi_{\SSS^m,p^j}(q)$},~g_{p^j,\SSS^j}^{-1}(z)=v\alpha +\sqrt{1-\|\alpha\|^2}\wv z\,.
\end{align}
Now, $g_{p^j,\SSS^j}$ embeds $p^{j'}$ as a $j'$-dimensional (possibly small) subsphere $p'_{j'}= g_{p^j,\SSS^j}(p^{j'})\subset \SSS^j$ in $\RR^{j+1}$, given by $[w^T,\beta]^T = p'_{j'}$, for suitable  $(w^T,\beta)^T \in O(j-j',j+1) \times \DD^{j-j'}$, and there is an orthogonal complement $\ww \in O(j'+1,j+1)$ of $w$, such that
\begin{align}\label{proof-PNS-BNDS:eq2}
  v' = (v,\wv w),\quad \wv' = \wv \ww,\quad {\alpha'}^T=\left(\alpha^T,\beta^T\sqrt{1-\|\alpha\|^2}\right)
\mbox{ and  }(v',\wv') \in O(m+1)\,.
\end{align}
This gives the definition of the projection, independent of the specific orthogonal complements chosen,
\begin{align}\nonumber \label{eq:projections}
  \pi_{p^j,p^{j'}} = g_{p^{j},\SSS^{j}}^{-1} \,\circ\, \pi_{\SSS^j,p'_{j'}}\,\circ\, g_{p^j,\SSS^j}: ~p^j &\to~ p^{j'},\\
  \nonumber
  y&\mapsto~ v\alpha + \sqrt{1-\|\alpha\|^2}\,\wv\left( w\beta + \sqrt{1-\|\beta\|^2}\,\frac{\ww\ww^T\wv^Ty}{\|\ww^T\wv^Ty\|}\right)\\
  &=~   v'\alpha' + \sqrt{1-\|\alpha'\|^2} \frac{\wv'{{\widetilde{v'}}}^Ty}{\|{{\widetilde{v'}}}^Ty\|}\,.
\end{align}
Plugging in (\ref{eq:blow-down}) into the above equality and taking into account that $\wv^Ty/\|\wv^Ty\|=\wv^Tq/\|\wv^Tq\|$, cf. (\ref{eq:blow-up}),  yields at once the following proposition which asserts that projections along nested subspheres only depend on the final subsphere at which it ends. Recall that $\alpha =0$ ($=\beta$ if $j'> 0$) in case of PNGS. 

\begin{Prop}\label{PNS-BNDS:prop}
  With the above notation $\pi_{p^j,p^{j'}}\,\circ\, \pi_{\SSS^m,p^j} = \pi_{\SSS^m,p^{j'}}$.
\end{Prop}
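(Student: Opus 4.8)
The plan is a direct computation from the explicit formulas \eqref{eq:blow-down}, \eqref{eq:blow-up} and \eqref{eq:projections}, using the compatibly chosen representatives $(v^T,\alpha)^T$ of $p^j$ and $({v'}^T,\alpha')^T$ of $p^{j'}$ and the block decomposition \eqref{proof-PNS-BNDS:eq2}. The single fact that does the work is the following. Write $q' = \pi_{\SSS^m,p^j}(q) = v\alpha + \sqrt{1-\|\alpha\|^2}\,\wv\wv^T q/\|\wv^T q\|$. Since $(v,\wv)\in O(m+1)$ gives $\wv^T v = 0$ and $\wv^T\wv = I_{j+1}$, one obtains $\wv^T q' = \sqrt{1-\|\alpha\|^2}\,\wv^T q/\|\wv^T q\|$ and hence, after normalisation,
\begin{align*}
  \frac{\wv^T q'}{\|\wv^T q'\|} \;=\; \frac{\wv^T q}{\|\wv^T q\|}\,.
\end{align*}
This is precisely the identity already recorded in \eqref{eq:blow-up}, expressing that the blow-up chart $g_{p^j,\SSS^j}$ sends $q$ and its $p^j$-projection $q'$ to the same point of $\SSS^j$.

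Next I would feed this into the two formulas for $\pi_{p^j,p^{j'}}(y)$ given in \eqref{eq:projections} with $y = q'$. Using $\wv' = \wv\ww$ from \eqref{proof-PNS-BNDS:eq2}, every occurrence of $\wv^T y$ — equivalently $\ww^T\wv^T y$, equivalently $\wv'^T y$ — is a common positive scalar multiple (depending on $q$) of the corresponding quantity for $q$, and that scalar cancels between numerator and denominator of the normalised fractions. As the offset $v'\alpha'$ and the blow-up constant $\sqrt{1-\|\alpha'\|^2}$ depend only on $p^{j'}$, \eqref{eq:projections} collapses to
\begin{align*}
  \pi_{p^j,p^{j'}}\big(\pi_{\SSS^m,p^j}(q)\big) \;=\; v'\alpha' + \sqrt{1-\|\alpha'\|^2}\,\frac{\wv'\wv'^T q}{\|\wv'^T q\|} \;=\; \pi_{\SSS^m,p^{j'}}(q)\,,
\end{align*}
the last equality being \eqref{eq:blow-down} applied to $p^{j'}$. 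Alternatively one may expand $v'\alpha' = v\alpha + \sqrt{1-\|\alpha\|^2}\,\wv w\beta$ and $\sqrt{1-\|\alpha'\|^2} = \sqrt{1-\|\alpha\|^2}\,\sqrt{1-\|\beta\|^2}$ out of \eqref{proof-PNS-BNDS:eq2} and match the middle line of \eqref{eq:projections} term by term.

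Finally I would dispose of the conventions and domains. For $j'=0$ one has $\|\alpha'\|=1$, so both sides equal the constant $v'\alpha'$ and the statement is trivial; in the PNGS case one puts $\alpha = 0$ (and $\beta = 0$ when $j'>0$) and the argument applies verbatim. Independence from the choices of orthonormal complements $\wv,\ww,\wv'$ is automatic, since each enters only through the orthoprojectors $\wv\wv^T$ and $\wv'\wv'^T$, which are determined by $p^j$ and $p^{j'}$; and since the columns of $v$ are among those of $v'$, the composition $\pi_{p^j,p^{j'}}\circ\pi_{\SSS^m,p^j}$ is defined exactly where $\wv'^T q \neq 0$, the same spherical-measure-zero complement on which $\pi_{\SSS^m,p^{j'}}$ is defined, so the identity is asserted on their common domain. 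I do not expect a genuine obstacle here: once the orthogonality relations packed into $(v',\wv')\in O(m+1)$ are written out, the proposition is immediate, and the only real risk is a bookkeeping slip in tracking the normalisations.
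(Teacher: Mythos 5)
Your proof is correct and follows exactly the argument the paper gives (in the sentence immediately preceding the proposition): substitute $y=\pi_{\SSS^m,p^j}(q)$ into the last line of \eqref{eq:projections} and use the normalisation identity $\wv^Ty/\|\wv^Ty\|=\wv^Tq/\|\wv^Tq\|$ from \eqref{eq:blow-up}. You simply spell out a few more of the intermediate steps (propagating the identity through $\wv'=\wv\ww$, the $j'=0$ and PNGS conventions, and the measure-zero domain issue), which the paper leaves implicit.
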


\textbf{Distance between projected data and next subsphere.} Here, we compute the intrinsic geodesic distance $\rho_{p^j}(y,p^{j-1})$ between $y$ and $p^{j-1} =[{v'}^T,\alpha']^T\in S_{p^j}$ in a subsphere $p^j = [v^T,\alpha]^T \in P_j$ ($j=1,\ldots,m)$. Note that only in case of $p^j$ being a great subsphere, this distance agrees with the spherical distance $\arccos\big(y^T\, \pi_{p^j,p^{j-1}}(y)\big)$ in the top sphere $\SSS^m$. If $p^j$ is a proper subsphere ($0<\|\alpha\|<1$), assuming w.l.o.g. that $v'= (v,v_{m-j+1})$ and ${\alpha'}^T = (\alpha^T,\alpha_{m-j+1})^T$ we have 
\begin{align} \nonumber 
  \MoveEqLeft \rho_{p^j}(y,p^{j-1})\\ \nonumber
  &= \sqrt{1-\|\alpha\|^2}\arccos \left(\big(g_{p^j,\SSS^j}(y)\big)^T\, g_{p^j,\SSS^j}\circ \pi_{p^j,p^{j-1}}(y)\right)\\
  &= \sqrt{1-\|\alpha\|^2}\arccos\left(\frac{1}{1-\|\alpha\|^2} \left(y^Tv_{m-j+1}\alpha_{m-j+1} + \sqrt{1-\|\alpha'\|^2}y^T(I_{m+1}-v'{v'}^T)y
  \right)\right)\,. \label{rho-p-PNS:eq}
\end{align}
Indeed with $j'=j-1$ and the notation from (\ref{eq:blow-down}) to (\ref{eq:projections}) and orthogonal complements $\wv = (v_{m-j+1},\wv')$ of $v$ and $\wv'$ of $v'$, respectively,
we have 
\begin{align*}
  \big(g_{p^j,\SSS^j}(y)\big)^T\, g_{p^j,\SSS^j}\circ \pi_{p^j,p^{j'}}(y) 
  &=\frac{y^T\wv}{\|\wv^Ty\|} 
  \frac{\wv^T\left(v'\alpha' + \sqrt{1-\|\alpha'\|^2} \frac{\wv'{{\widetilde{v'}}}^Ty}{\|{{\widetilde{v'}}}^Ty\|}\right)}{\left\|\wv^T\left(v'\alpha' + \sqrt{1-\|\alpha'\|^2} \frac{\wv'{{\widetilde{v'}}}^Ty}{\|{{\widetilde{v'}}}^Ty\|}\right)\right\|}
\end{align*}
since $\|\wv^T y\| = \sqrt{1-\|\alpha\|^2}$ due to (\ref{eq:blow-down}), $y^T \wv \wv^T v' \alpha' = y^Tv_{m-j+1}\alpha_{m-j+1}$ and $y^T \wv \wv^T \wv'{{\widetilde{v'}}}^Ty = y^T\wv'{{\widetilde{v'}}}^Ty$.

\textbf{Optimal positioning.} On $M_j$ we have the extrinsic metric due to its embedding in $\RR^{(m+1)\times(m-j)}\times \RR^{m-j}$, cf. (\ref{extrinsic-metric:eq}), and its thus induced Riemannian metric. Obviously $O(m-j)$ acts on $M_j$ isometrically w.r.t. the extrinsic metric. It also acts isometrically w.r.t. the Riemannian metric because the action also preserves geodesics on $O(m-j,m+1)$, e.g. \cite[p. 309]{EAS98}. In consequence, for both metrics, we say for given $z,z'\in M_j$ that $R\in O(m-j)$ puts $z$ i.o.p. (\emph{in optimal position}) to $z'$,  if
\begin{align*}
  \| z'-zR\| =\min_{R'\in O(m-j)} \| z'-zR'\|\,.
\end{align*}
% Every isometric action, $g:M\to M, p\mapsto g(p)$ for $g\in G$, of a Lie group $G$ on a metric space $(M,d)$ comes with a concept of optimal positioning: for $p,p'\in M$, $g\in G$ puts $p$ i.o. $G$-p. t. $p'$ (\emph{in optimal $G$-position to $p'$}) if
% \begin{align*}
%   d\big(p',g(p)\big) = \min_{g'\in G} d\big(p',g'(p)\big)\,.
% \end{align*}
% 

\begin{Lem}\label{lem:smooth-op} 
  Let $j \in \{0,\ldots,m-1\}$ and $z,z' \in M_j$ be sufficiently close. Then, there is a unique $R_{z,z'} \in O(m-j)$ such that
  \begin{enumerate}[(i)]
    \item $R_{z,z'}$ puts $z$ uniquely i.o.p. $zR_{z,z'}$ to $z'$,
    \item if ${z'}^T z$ is symmetric then $R_{z,z'} = I_{m-j}$.
  \end{enumerate}
\end{Lem}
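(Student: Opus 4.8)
The plan is to observe that, after dividing out the $O(m-j)$-action, determining $R_{z,z'}$ is an instance of the orthogonal Procrustes problem, whose solution is read off from a singular value decomposition (SVD).

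First I would expand the objective. Writing $z=(v^T,\alpha)^T$, $z'=(v'^T,{\alpha'}^T)^T\in M_j$ and using $v^Tv={v'}^Tv'=I_{m-j}$ together with $R^TR=I_{m-j}$, one gets, exactly as in the proof of Lemma~\ref{lem:Ziezold-metric},
\begin{align*}
\|z'-zR\|^2 \;=\; 2(m-j)+\|\alpha\|^2+\|\alpha'\|^2-2\,\tr\!\big(D\,R^T\big),\qquad D:=z^Tz'=v^Tv'+\alpha\,{\alpha'}^T,
\end{align*}
so minimising $\|z'-zR\|$ over $R\in O(m-j)$ amounts to maximising the linear functional $R\mapsto\tr(DR^T)$. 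Fixing an SVD $D=U\Sigma W^T$ with $U,W\in O(m-j)$ and $\Sigma=\diag(\sigma_1,\dots,\sigma_{m-j})$, $\sigma_i\ge0$, and substituting $\tilde R:=W^TR^TU\in O(m-j)$, one obtains $\tr(DR^T)=\tr(\Sigma\tilde R)=\sum_i\sigma_i\tilde R_{ii}\le\sum_i\sigma_i$, since every entry of an orthogonal matrix is at most $1$ in modulus, with equality forcing $\tilde R_{ii}=1$ whenever $\sigma_i>0$. When all $\sigma_i>0$, such a $\tilde R$ is orthogonal with all diagonal entries equal to $1$, hence $\tilde R=I_{m-j}$ and $R=UW^T=:R_{z,z'}$ is the unique maximiser; equivalently $R_{z,z'}=D(D^TD)^{-1/2}$ is the orthogonal polar factor of $D$, which in particular depends smoothly on $(z,z')$ wherever $D$ is invertible.

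It then remains to use the closeness hypothesis and to settle (ii). For $z=z'$ the matrix $D$ becomes $D_0:=z^Tz=I_{m-j}+\alpha\alpha^T\succeq I_{m-j}$, which is symmetric positive definite; since $\|D-D_0\|=\|z^T(z'-z)\|\le\|z\|\,\|z'-z\|$ with $\|z\|^2=(m-j)+\|\alpha\|^2$ bounded uniformly on $M_j$, there is $\varepsilon>0$ independent of the pair so that $\|z-z'\|<\varepsilon$ keeps all singular values of $D$ positive (invertibility survives operator-norm perturbations smaller than the least singular value of $D_0$, which is $\ge1$). The previous paragraph then gives existence and uniqueness of the minimiser $R_{z,z'}$ of $R\mapsto\|z'-zR\|$ over $O(m-j)$, which is (i). For (ii), note ${z'}^Tz=D^T$, so "${z'}^Tz$ symmetric" says precisely $D=D^T$; a symmetric $D$ close to the positive definite $D_0$ is itself positive definite, so its SVD is its spectral decomposition $D=U\Sigma U^T$ (i.e. $W=U$), and hence $R_{z,z'}=UW^T=UU^T=I_{m-j}$.

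The one point to treat with care is the uniqueness claim: when $\Sigma$ has repeated singular values the factors $U,W$ are not individually unique, but their product $UW^T$ — the polar factor — still is, provided $D$ is invertible. So the genuine role of the "sufficiently close" hypothesis is exactly to force nondegeneracy of $D$, which works because the limiting matrix satisfies $D_0\succeq I_{m-j}$; this is the only real obstacle, and it is mild.
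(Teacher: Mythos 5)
Your proof is correct and follows essentially the same route as the paper: reduce the minimisation over $O(m-j)$ to maximising a trace against an orthogonal matrix, solve it by an SVD of $z^Tz'$, and use the closeness hypothesis only to guarantee that this Gram-type matrix stays invertible because its limit $I_{m-j}+\alpha\alpha^T$ is positive definite. Your identification of the unique minimiser as the polar factor $D(D^TD)^{-1/2}$ coincides with the paper's formula $A^TS\Lambda^{-1}S^T$, and your observation in (ii) that symmetry together with proximity to $I_{m-j}+\alpha\alpha^T$ forces positive definiteness (so that the SVD really is a spectral decomposition) makes explicit a point the paper passes over quickly.
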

\begin{proof} 
  In order that $R\in O(m-j)$ minimizes the r.h.s of (\ref{extrinsic-metric:eq}), given by
 $ % \begin{align*}
    \|z' - zR\|^2 = \|z'\|^2 + \|z\|^2 -2\,\tr(AR)\,,
 $ % \end{align*}
  with $A= {z'}^Tz$, it maximizes $\tr(AR)$. This is so if $R = QS^T$ for a singular value decomposition (svd) $S\Lambda Q^T=A$ stemming from a spectral decomposition $AA^T = S\Lambda^2S^T$. 
  Since for $z=z'=({v'}^T,\alpha')^T$ we have that $A= I_{m-j} + \alpha'{\alpha'}^T$ is of full rank for all $\alpha'\in \RR^{m-j}$, there is locally a full rank svd, which is unique up to $S\mapsto SL$ and $Q = A^TS\Lambda^{-1} \mapsto A^TSL\Lambda^{-1}$ for any $L \in O(m-j)$ with $L\Lambda L^T = \Lambda$. However, $R_{z,z'}=A^TS\Lambda^{-1} S^T$ is unique under actions of such $L$, yielding (i).
  (ii): The symmetry $S\Lambda Q^T=A=A^T=Q\Lambda S^T$ allows to choose $Q=S$, i.e. $R_{z,z'}= I_{m-j}$.
  \end{proof}

This Lemma has the following immediate consequence. 
\begin{Cor}
  If $z,z' \in M_j$ are sufficiently close then
  \begin{align*}
    z,z'\mbox{ are i.o.p. }\Leftrightarrow z^Tz' - {z'}^T z=0\,.
  \end{align*}
\end{Cor}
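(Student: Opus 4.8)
The plan is to read off both implications directly from Lemma~\ref{lem:smooth-op}, after observing that the algebraic condition is just a symmetry statement: since $({z'}^Tz)^T = z^Tz'$, we have $z^Tz' - {z'}^Tz = 0$ if and only if $A := {z'}^Tz$ is symmetric. Throughout one uses that $z, z'$ are sufficiently close, so that Lemma~\ref{lem:smooth-op} applies and supplies the \emph{unique} minimiser $R_{z,z'}$ of $R'\mapsto\|z'-zR'\|$ over $O(m-j)$; by definition ``$z, z'$ are i.o.p.'' means that this minimum is attained at $R' = I_{m-j}$, so, invoking uniqueness, it is equivalent to the equality $I_{m-j} = R_{z,z'}$.

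For the implication ``$A$ symmetric $\Rightarrow z, z'$ i.o.p.'', I would simply quote part (ii) of Lemma~\ref{lem:smooth-op}, which gives $R_{z,z'} = I_{m-j}$, together with part (i), which says $R_{z,z'}$ realises the minimum; hence $I_{m-j}$ realises it and $z, z'$ are i.o.p. For the converse ``$z, z'$ i.o.p. $\Rightarrow A$ symmetric'', I would argue via the first-order optimality condition. Writing $\|z'-zR\|^2 = \|z'\|^2 + \|z\|^2 - 2\,\tr(AR)$ as in the proof of the Lemma, optimality of $R = I_{m-j}$ on the compact manifold $O(m-j)$ forces $\tfrac{d}{dt}\tr\!\big(Ae^{tX}\big)\big|_{t=0} = \tr(AX) = 0$ for every skew-symmetric $X$; since the trace of a symmetric matrix times a skew-symmetric one always vanishes, this reduces to $\tr\!\big(\tfrac12(A-A^T)X\big) = 0$ for all skew $X$, and taking $X = \tfrac12(A-A^T)$ gives $\|\tfrac12(A-A^T)\|^2 = 0$, so $A = A^T$. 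Alternatively one can reuse the explicit formula $R_{z,z'} = A^TS\Lambda^{-1}S^T$ with $AA^T = S\Lambda^2S^T$ from the proof of Lemma~\ref{lem:smooth-op}: setting $R_{z,z'} = I_{m-j}$ yields $A^T = S\Lambda S^T$, which is symmetric.

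The argument is genuinely short, and the only point that needs care — the ``main obstacle'', such as it is — is the correct reading of the hypotheses: the equivalence hinges on the \emph{uniqueness} of the optimal rotation in Lemma~\ref{lem:smooth-op}, and hence on the closeness of $z$ and $z'$, which is exactly what lets one pass between ``$I_{m-j}$ is optimal'' and ``$I_{m-j} = R_{z,z'}$''. Everything else is a direct quotation of the Lemma or a one-line computation in $O(m-j)$.
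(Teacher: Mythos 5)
Your proof is correct and takes essentially the route the paper intends — the paper leaves the corollary as an ``immediate consequence'' of Lemma~\ref{lem:smooth-op}, and your reading matches: part~(ii) gives the $\Leftarrow$ direction, while the converse is read off either from uniqueness in part~(i) together with the explicit formula $R_{z,z'}=A^TS\Lambda^{-1}S^T$ (your second argument, the one implicit in the paper), or, as you also note, from the first-order stationarity condition $\tr(AX)=0$ for all skew-symmetric $X$, which is a nice self-contained alternative for that direction. One tiny sharpening: the first-order argument for $\Rightarrow$ does not in fact use closeness at all (global optimality of $R=I_{m-j}$ already implies stationarity on $SO(m-j)$), whereas closeness is genuinely needed for $\Leftarrow$ — not just for uniqueness of the optimal rotation but already to ensure that $A$ is near the positive-definite matrix $I_{m-j}+\alpha'{\alpha'}^T$, since a symmetric $A$ with a negative eigenvalue would make $R=I_{m-j}$ a critical point but not the maximizer of $\tr(AR)$.
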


Thus, in further consequence, for every $z' \in M_j$ there is $\epsilon >0$ such that 
\begin{align}\label{eq:chart-Pj}
  U_j^{z'} &\::=\: \left\{z\in M_j: z^Tz' - {z'}^T z=0, \|z-z'\|<\epsilon\right\}
\end{align}
is a smooth manifold and every $p'\in P_j$ has a neighborhood $V$ with a smooth diffeomorphism
\begin{align}
  \label{eq:1chart-Pj} V\to U_j^{z'}, \qquad p \mapsto z = \widetilde{z}R_{\widetilde{z},z'}
\end{align}
where $z'\in M_j$ is a fixed representative of $p'$ and $\widetilde{z}\in M_j$ is an arbitrary representative of $p$, and $z$ is a representative i.o. p to $z'$. This follows from the fact that locally  a point is i.o.p to $z'$ if and only if it can be reached by a \emph{horizontal} geodesic from $z'$, and from the fact that all geodesics in $P_j$ through $p'$ lift to horizontal geodesics in $M_j$ through $z'$ and all horizontal geodesics in $M_j$ project to geodesics in $P_j$. For a detailed discussion e.g. \cite{HHM07}.

\textbf{Representing spaces of nested subspheres: Factoring charts.} 
For $j \in \{1,\ldots,m-1\}$ recall
\begin{align*}
  T_{j,1} = \{(p,s): p \in P_j, s\in S_p\}\,,
\end{align*}
and that if $y=(v^T,\alpha)^T\in M_j$ is a representative of $p \in P_j$, then every $s\in S_p$ is represented by some
\begin{align*}
  z=\twovec{v,v_{m-j+1}}{\alpha^T,\alpha_{m-j+1}}\in M_{j-1}
\end{align*}
with suitable choices $v_{m-j+1}\in \SSS^{m}$, $v^Tv_{m-j+1}=0$ and $\alpha_{m-j+1}^2 < 1- \|\alpha\|^2$ (for PNS, $j>1$), $\alpha_{m-j+1} =0=\|\alpha\|$ (for PNGS, $j>1$) and $\alpha_{m-j+1}^2 = 1- \|\alpha\|^2$ (for $j=1$). Vice versa, from every representative $z=(w^T,\beta)^T\in M_{j-1}$ of $s\in P_{j-1}$, all of the $p\in P_j$ with $s\in S_p$ are determined by choosing $m-j$ orthonormal vectors from the column span of $w$ (the columns of $wB$ below) with suitable $m-j$ distances (given by the vector $\beta^TB$ below), i.e. every such $p$ has a representation
\begin{align*}
  p = [zB]
\end{align*}
as $B$ ranges over $O(m-j,m-j+1)$. In case of PNGS and $j=1$, for compatibility we set $(0,\ldots,0,1)B= (0,\ldots,0)$.
Of course, different $B$ may give same the $p$. More precisely, here is the central observation.

Given $s = [z]\in P^{j-1}$, every $p\in P^j$ with $s\in S_p$ is uniquely determined as $p=[zB]$ by choice of $b\in \SSS^{m-j}$, where $B$ is an arbitrary orthonormal complement of $b$ in $O(m-j+1)$, i.e. $(B,b) \in O(m-j+1)$. This means that $\SSS^{m-j}$ uniquely parametrizes $\{p \in P^{j}: s\in S_p\}$. The reason is the well known fact that the Grassmannian $O(m-j,m-j+1)/SO(m-j) = SO(m-j+1)/SO(m-j)$ is diffeomorphic with $\SSS^{m-j}$. %, in fact there is an analytic diffeomorphism (e.g. \cite[p. 148]{Rossmann2002}). 
Again, in case of PNGS and $j=1$, for compatibility we set $(0,\ldots,0,1)b= 0$.

This observation gives rise to  \emph{factoring charts} as introduced in Definition \ref{def:factoring}. Here and below in Theorem \ref{th:several-nested-subspheres}, $U_{j-1}^{z'}$ assumes the role of $U$ from Definition \ref{def:factoring}.

\begin{Lem}\label{lem:two-nested-subspheres}
  Every $(p',s')\in T_{j,1}$ has a neighborhood $W\subset T_{j,1}$ and a smooth diffeomorphism
  \begin{align*}
    \phi : W \to U_{j-1}^{z'}\times V,~~(p,s) \mapsto (z,b)\,
  \end{align*}
  where $U_{j-1}^{z'}$ is from (\ref{eq:chart-Pj}), $V\subset \SSS^{m-j}$ is a neighborhood of $(0,\ldots,0,1)^T\in \RR^{m-j+1}$ and
  \begin{align*}
    z' = \twovec{{v'},v'_{m-j+1}}{{\alpha'}^T,\alpha_{m-j+1}}\mbox{ and } \twovec{v'}{{\alpha'}^T}
  \end{align*}
  are representatives of $s'$ and $p'$ respectively.
  In particular,
  \begin{align*}
    s = [z]\mbox{ and } p = [zB]
  \end{align*}
  with arbitrary orthonormal complement $B$ of $b$.
\end{Lem}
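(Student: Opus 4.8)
The plan is to produce the chart $\phi$ by first constructing its inverse $\psi=\phi^{-1}$ explicitly as a smooth embedding of a product into $P_j\times P_{j-1}$, and then to read off the factoring property. Two preliminary devices are needed. First, by Lemma \ref{lem:smooth-op} together with the chart (\ref{eq:1chart-Pj}), every $s$ in a small neighbourhood of $s'$ has a \emph{unique} representative $z(s)\in U_{j-1}^{z'}$, namely the one in optimal position to $z'$, and $s\mapsto z(s)$ is a smooth diffeomorphism onto $U_{j-1}^{z'}$. Second, I would fix, near $b'=(0,\ldots,0,1)^{T}\in\SSS^{m-j}$, a smooth parametrisation $b\mapsto B(b)\in O(m-j,m-j+1)$ of orthonormal $(m-j)$-frames by their complementary direction, obtained by Gram--Schmidt started at $b'$, so that $(B(b),b)\in O(m-j+1)$ and $B(b')=(I_{m-j},0)^{T}$; note that $\Span B(b)=b^{\perp}$ depends on $b$ only up to sign, but the chosen assignment $b\mapsto B(b)$ is genuinely well defined near $b'$.

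I would then set, for $z\in U_{j-1}^{z'}$ and $b$ in a small neighbourhood $V\subset\SSS^{m-j}$ of $b'$,
\begin{align*}
  \psi(z,b)=\big([zB(b)],\,[z]\big)\in P_{j}\times P_{j-1}\,.
\end{align*}
The first thing to check is that $\psi$ lands in $T_{j,1}$, i.e. $[z]\in S_{[zB(b)]}$: writing $z$ with frame part $w$ and distance part $\beta$, the frame $wB(b)$ lies in $O(m-j,m+1)$, while the column $wb$ and the entry $\beta^{T}b$ are precisely the extra data restoring $z$ from $zB(b)$ up to the orthogonal action, and the only inequality in the defining list of $S_p$ collapses, via $\|\beta^{T}B(b)\|^{2}+(\beta^{T}b)^{2}=\|\beta\|^{2}$, to $\|\beta\|<1$ (respectively $\|\beta\|=1$ when $j=1$, with the $(0,\ldots,0,1)B=0$ convention in the PNGS, $j=1$ case), which holds by definition of $M_{j-1}$. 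Smoothness of $\psi$ is immediate, since $(z,b)\mapsto zB(b)$ is smooth into $M_j$ and the quotient maps $M_{j}\to P_{j}$, $M_{j-1}\to P_{j-1}$ are smooth submersions.

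It then remains to verify that $\psi$ is a bijection onto an open subset $W$ of $T_{j,1}$, that its inverse $\phi$ is continuous, and that $\psi$ is an immersion. For injectivity: $[z]=[z_{1}]$ with $z,z_{1}\in U_{j-1}^{z'}$ forces $z=z_{1}$ by uniqueness of the optimally positioned representative (Lemma \ref{lem:smooth-op}), whereupon $[zB(b)]=[zB(b_{1})]$ forces $\Span B(b)=\Span B(b_{1})$, because $z$ has full column rank, hence $b=\pm b_{1}$, hence $b=b_{1}$ once $V$ is small. Surjectivity onto a neighbourhood of $(p',s')$ is exactly the observation recorded just above the statement of the Lemma: given $(p,s)\in T_{j,1}$ near $(p',s')$, put $z=z(s)$; since $s\in S_p$ there is $R\in O(m-j+1)$ such that $zR$ is a representative of $s$ whose first $m-j$ frame columns and distance entries form a representative of $p$, so that with $B$ the first $m-j$ columns of $R$ and $b$ its last column, adjusted in sign to lie in $V$, one has $p=[zB]=[zB(b)]$ and therefore $\psi(z,b)=(p,s)$. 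Continuity of $\phi=\psi^{-1}$, $(p,s)\mapsto(z(s),b)$, holds because $z(s)$ is continuous by (\ref{eq:1chart-Pj}) and $b$ is recovered as the normalisation of $w^{T}(I_{m+1}-vv^{T})w_{m-j+1}$ from any representative $v$ of $p$ (the result is independent of the choice, since $vv^{T}$ is) and from $z(s)$, whose frame columns are $w_{1},\ldots,w_{m-j+1}$ — an expression that is smooth near $(p',s')$ because $(I_{m+1}-vv^{T})w_{m-j+1}$ is nonzero there. Finally $\psi$ is an immersion: its second component recovers $\dot z$ through the diffeomorphism (\ref{eq:1chart-Pj}), and the first component then recovers $\dot b$ since $b\mapsto B(b)$ is an immersion. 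Hence $\psi$ is a smooth embedding, $W:=\psi(U_{j-1}^{z'}\times V)$ is open in $T_{j,1}$ and receives the asserted smooth structure, $\phi=\psi^{-1}$ is the claimed chart with $s=[z]$ and $p=[zB]$ for $B$ an arbitrary orthonormal complement of $b$, and it factors in the sense of Definition \ref{def:factoring}, since $\pi^{P^{j-1}}\circ\psi(z,b)=[z]$ depends on $z$ alone, so that $U_{j-1}^{z'}$ indeed plays the role of $U$ there.

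The step I expect to be the real obstacle is not any individual computation but keeping straight which quantities are genuine functions on $T_{j,1}$ and which merely depend on a choice of frame: the maps $[\,\cdot\,]$ are defined only modulo the $O(m-j)$- and $O(m-j+1)$-actions, and the complementary direction $b$ is a priori defined only up to sign. One has to pin these down once and for all — the representative of $s$ by optimal positioning, the branch of $b$ by continuity at $b'$ — and then show that the resulting assignment $(p,s)\mapsto(z,b)$ is not merely well defined but smooth in both directions; once the coordinates are fixed this way, the remaining algebraic identities (the orthogonality relations, $\|\beta^{T}B\|^{2}+(\beta^{T}b)^{2}=\|\beta\|^{2}$, and the $j=1$ and PNGS edge conventions) are routine.
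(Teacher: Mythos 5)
Your proof is correct and takes essentially the same route as the paper's: optimal positioning (Lemma \ref{lem:smooth-op}) pins down a smoothly varying representative $z(s)\in U_{j-1}^{z'}$ of $s$, the parametrisation $p=[zB]$ by the unit vector $b$ complementary to $B$ recovers $p$, and the $\pm b$ ambiguity is removed by shrinking $V$ so it contains no antipodal pairs. The paper states these three facts and stops; you additionally exhibit $\psi=\phi^{-1}$ explicitly, verify it lands in $T_{j,1}$ (via $\|B(b)^T\beta\|^2+(b^T\beta)^2=\|\beta\|^2$), and check bijectivity, smoothness of the inverse through the formula $b\propto w^T(I_{m+1}-vv^T)w_{m-j+1}$, and the immersion property — all of which is sound but elaborates rather than changes the argument.
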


\begin{proof}
  Let $s = [\tilde{z}]$ for some $\tilde{z}\in M_j$ be sufficiently close to $s'$. Then, according to Lemma \ref{lem:smooth-op}, $s$ has a unique representative $z=\tilde{z}R_{\tilde{z},z'}$ in optimal position to $z'$, varying smoothly in $\tilde{z}$, hence smoothly in $s$. Moreover, as elaborated above, every $p\in P_j$ with $s\in S_p$ depends uniquely upon the choice of $b\in \SSS^{m-j}$ and any orthonormal complement $B$, i.e. $p = [zB]$. Although $-b$ determines the same $m-j$ dimensional subspace, a one-to-one mapping is obtained choosing the neighborhood $V$ of $(0,\ldots,0,1)$ suitably small not containing antipodals.
\end{proof}

The following is a straightforward generalization to spaces of sequences of several nested subspheres.
\begin{Th}[Factoring Charts]\label{th:several-nested-subspheres}
  Let $j \in\{1,\ldots,m-1\}$ and $k\in \{1,\ldots,j\}$. Then
  every $({p'}^j,\ldots, {p'}^{j-k})\in T_{j,k}$ has a neighborhood $W\subset T_{j,k}$ and a smooth diffeomorphism
  \begin{align*}
    \phi : W \to U_{j-k}^{z'}\times \prod_{r=1}^kV_r,~~(p^j,\ldots,p^{j-k}) \mapsto (z,b_1,\ldots,b_k)\,
  \end{align*}
  where $U_{j-k}^{z'}$ is from (\ref{eq:chart-Pj}), each $V_r\subset \SSS^{m-j+r-1}$ is a neighborhood of $(0,\ldots,0,1)^T\in \RR^{m-j+r}$,
  \begin{align*}
    z' = \twovec{v'_1,\ldots, v'_{m-j+k}}{{\alpha'_1},\ldots,{\alpha'_{m-j+k}}}\mbox{ and } \twovec{v'_1,\ldots, v'_{m-j+r}}{\alpha'_1,\ldots,\alpha'_{m-j+r}}
  \end{align*}
  are representatives of ${p'}^{j-k}$ and ${p'}^{j-r}$ respectively ($r=1,\ldots,k$). In particular,
  \begin{align*}
    p^{j-k} = [z]\mbox{ and } p^{j-r+1} = [zB_k\cdots B_{r}]
  \end{align*}
  with arbitrary orthonormal complements $B_r$ in $O(m-j+r)$ of $b_r\in V_r$ ($r=1,\ldots,k$).
\end{Th}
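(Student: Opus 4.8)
The plan is to argue by induction on $k$, the base case $k=1$ being exactly Lemma \ref{lem:two-nested-subspheres}. For the inductive step, assume the assertion for $k-1$ and every admissible first index, and let $({p'}^j,\ldots,{p'}^{j-k})\in T_{j,k}$ be given. The essential decision is to peel off the \emph{outermost} subsphere ${p'}^j$: its truncation $({p'}^{j-1},\ldots,{p'}^{j-k})$ lies in $T_{j-1,k-1}$, and the inductive hypothesis applied to it already delivers the innermost descriptor in optimal position together with all the larger ones expressed through that single representative — precisely the factoring structure we must propagate. First I would fix once and for all a representative $z'\in M_{j-k}$ of ${p'}^{j-k}$ whose leading $(m-j+r)$-column truncation represents ${p'}^{j-r}$ for $r=0,\ldots,k$; this is possible because the backward nesting ${p'}^{l-1}\in S_{{p'}^{l}}$ says precisely that the defining frame of ${p'}^{l-1}$ extends that of ${p'}^{l}$ by one orthonormal column and one signed distance, as described before Lemma \ref{lem:two-nested-subspheres}, and with this choice the north-pole normalisation of the $V_r$ comes out automatically.

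Applying the inductive hypothesis to $({p'}^{j-1},\ldots,{p'}^{j-k})$ with this same $z'$ as base representative (its truncation pattern is exactly the one needed there, indices shifted by one) gives a neighbourhood $\tilde W\subseteq T_{j-1,k-1}$ and a smooth diffeomorphism onto $U_{j-k}^{z'}\times\prod_{r=2}^{k}V_r$ taking $(p^{j-1},\ldots,p^{j-k})$ to $(z,b_2,\ldots,b_k)$, with $p^{j-k}=[z]$, $z\in U_{j-k}^{z'}$ in optimal position to $z'$, and $p^{j-r+1}=[zB_k\cdots B_r]$ for $r=2,\ldots,k$, where the $B_r$ are obtained from suitable smooth local sections $b_r\mapsto B_r$ of the Stiefel fibrations (Gram--Schmidt near the north pole, say). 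The relabelling of the sphere factors and of the $b$'s against the codimensions $m-j+r-1$ is routine; the point is that $z$, the coordinate of the innermost descriptor, is produced independently of $b_2,\ldots,b_k$.

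It remains to adjoin the top subsphere $p^j\in P_j$, which contains $p^{j-1}=[zB_k\cdots B_2]$. Here I invoke the central observation preceding Lemma \ref{lem:two-nested-subspheres}: once a representative $w\in M_{j-1}$ of $p^{j-1}$ is fixed, $\{p\in P_j:p^{j-1}\in S_p\}$ is parametrised diffeomorphically by $\SSS^{m-j}$ via $p=[wB]$, $(B,b)\in O(m-j+1)$ (this realises $SO(m-j+1)/SO(m-j)\cong\SSS^{m-j}$), and on a hemispherical neighbourhood $V_1$ of $(0,\ldots,0,1)^T$, chosen to exclude antipodal pairs, the inverse $p\mapsto b$ is smooth. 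Taking $w:=zB_k(b_k)\cdots B_2(b_2)$, which is smooth in $(z,b_2,\ldots,b_k)$, then defines $b_1$ smoothly in terms of $(p^j,\ldots,p^{j-k})$. Putting $W:=\{(p^j,\ldots,p^{j-k})\in T_{j,k}:(p^{j-1},\ldots,p^{j-k})\in\tilde W,\ b_1\in V_1\}$, the map $\phi:(p^j,\ldots,p^{j-k})\mapsto(z,b_1,b_2,\ldots,b_k)$ is smooth, is a bijection onto $U_{j-k}^{z'}\times\prod_{r=1}^{k}V_r$ after shrinking, and has smooth inverse given by the reconstruction $p^{j-k}=[z]$, $p^{j-r+1}=[zB_k(b_k)\cdots B_r(b_r)]$; this closes the induction.

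I expect the main obstacle to be precisely the point that forced peeling the outermost rather than the innermost descriptor: every larger subsphere must be expressed through the \emph{single} optimal-position representative $z$ of the innermost one, so that the $P^{j-k}$-coordinate decouples from the direction coordinates. Iterating Lemma \ref{lem:two-nested-subspheres} from the inside out would, at each level, re-optimise the representative of the previous descriptor, inserting orthogonal rotations $R_{\,\cdot\,,z'}$ between consecutive complement factors; these depend on $z$, so the $b_r$ would cease to be independent of $z$ and the factoring property — the whole point of the theorem — would break. A lesser nuisance is keeping the Grassmannian-to-sphere identification single-valued (forcing the hemispheres around the north poles) and choosing the $B_r$ through genuine smooth local sections, which is what makes the reconstruction, hence $\phi^{-1}$, smooth.
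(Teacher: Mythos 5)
The paper offers no proof of this theorem, stating only that it is ``a straightforward generalization'' of Lemma \ref{lem:two-nested-subspheres}; your proposal supplies the missing argument and is correct. The induction on $k$, peeling off the outermost subsphere and invoking the $(j-1,k-1)$ case with the same base representative $z'$ (after checking that its truncation pattern is exactly the one the shifted indices require), is a natural way to make the claim precise, and your insistence on fixed smooth local sections $b_r\mapsto B_r$ rather than literally ``arbitrary orthonormal complements'' is a genuine improvement on the statement as printed: replacing $B_s$ by $B_sR_s$ for some $s>r$ changes $[zB_k\cdots B_r]$ unless $R_s$ fixes $b_{s-1}$ up to sign, so both the reconstruction formulas and the smoothness of $\phi^{-1}$ rest on a coherent smooth choice of complements, exactly as you arrange.

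The one place your discussion drifts is the closing diagnosis of the alternatives. You are right that peeling the \emph{innermost} descriptor would ruin factoring, since the chart coordinate for $p^{j-k}$ would then be split between a representative of $p^{j-k+1}$ and an additional column, so the $P_{j-k}$-block of Definition \ref{def:factoring} would not be isolable. But the further claim --- that iterating Lemma \ref{lem:two-nested-subspheres} pairwise from the inside out with re-optimization at each stage would make ``the factoring property \ldots break'' --- overstates things. Definition \ref{def:factoring} only asks that the first block of chart coordinates equal $\phi(p^{j-k})$, and that block remains $z$ in the re-optimized scheme; one could still define the chart as $(z,\beta_k,\ldots,\beta_1)$ with each $\beta_r$ read off against the re-optimized representative, and it would factor. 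What that scheme sacrifices is the clean formula $p^{j-r+1}=[zB_k\cdots B_r]$, since $z$-dependent rotations would need to be interleaved between the $B_r$'s --- not the factoring itself. Indeed in your own construction the $b_r$ also depend on $z$ through $w$; Definition \ref{def:factoring} never required otherwise.
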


\textbf{A joint representation.} Here we assume that $f'=({p'}^{m-1},\ldots, {p'}^{0})$ and $f=({p}^{m-1},\ldots, {p}^{0})$ are two sufficiently close families of backward nested spheres with representatives
\begin{align*}
  {p'}^{j} = \twoclass{v'_1,\ldots,v'_{m-j} }{\alpha'_1,\ldots,\alpha'_{m-j}},\quad {p}^{j} = \twoclass{v_1,\ldots,v_{m-j} }{\alpha_1,\ldots,\alpha_{m-j}},\quad j=0,\ldots,m
\end{align*}
such that
\begin{align*}
  \twovec{v'_1,\ldots,v'_{m} }{\alpha'_1,\ldots,\alpha'_{m}},\quad \twovec{v_1,\ldots,v_{m} }{\alpha_1,\ldots,\alpha_{m}}
\end{align*}
are in o. $O(m)$-p. Notably, there are then uniquely determined $SO(m+1)$ complements $v'_{m+1}$ and $v_{m+1}$ respectively. For a sufficiently small neighborhood $W\subset T_{m-1,m-1}$ of $f'$ and neighborhoods $V_r\subset \SSS^r$ of $(0,\ldots,0,1)^T$ in $\SSS^{r}$ ($r=1,\ldots,k$) we have then a smooth mapping
\begin{align*}
  W \to \prod_{r=1}^m V_r,\quad f \mapsto (b_m,\ldots,b_1)
\end{align*}
determined by the following algorithm (here is only the PNGS version). For arbitrary $v\in \RR^{\dim(v)}$, $\Pi_{v^\perp}: x \mapsto (I_{\dim(v)}- vv^T)x$ denotes the orthogonal projection to the complement of $v$.
\begin{itemize}
  \item $b_m \in \SSS^{m}$ is the unique element such that $v_m = (v'_1,\ldots,v'_{m+1})b_m$.
  \item $b_{m-1} \in \SSS^{m-1}$ is the unique element such that $v_{m-1} = ({v'}^{(1)}_1,\ldots,{v'}^{(1)}_m)b_{m-1}$. Here, $({v'}^{(1)}_1,\ldots,{v'}^{(1)}_m)$ is obtained from $(\Pi_{v_m^\perp}v'_1, \ldots, \Pi_{v_m^\perp}v'_{m+1})$ by removing one column such that $v_1,\ldots,v_{m-1}$ is in the span of the rest (usually $\Pi_{v_m^\perp}v'_m$).
  \item[] $\vdots$
  \item $b_{1} \in \SSS^{1}$ is the unique element such that $v_{1} = ({v'}^{(m-1)}_1,{v'}^{(m-1)}_2)b_{1}$. Here, $({v'}^{(m-1)}_1,{v'}^{(m-1)}_2)$ is obtained from $(\Pi_{v_2^\perp}{v'}^{(m-2)}_1, \Pi_{v_2^\perp}{v'}^{(m-2)}_{2}, \Pi_{v_2^\perp}{v'}^{(m-2)}_{3})$ by removing one column such that $v_1$ is in the span of the other two. 
\end{itemize}

\textbf{Local charts.} For $P_j$ and $T_{j,k}$, we have derived in \eqref{eq:chart-Pj} and Theorem \ref{th:several-nested-subspheres}, respectively, local smooth diffeomorphic representations in a Euclidean space of suitable dimension, $r$, say, that can be generically written as
\begin{align*}
  U = \{x\in V : \Phi(x) = 0\}
\end{align*}
with a neighborhood $V\subset \RR^r$ of some $x'\in \RR^r$ and a smooth mapping $\Phi: V \to \RR^s$, $r,s\in \NN$, $r> s$ with full rank derivative $d\Phi_{x'}$ at $x'$. Here, the corresponding matrices are viewed as vectorized by stacking their columns on top of one another. With $\Pi_{\cR d\Phi_{x'}}$, the orthogonal projection to the row space of the $s\times r$ matrix of the derivative at $x'\in V$ and orthogonal $r$-vectors $y_1,\ldots,y_t$, spanning the kernel of $d\Phi_{x'}$ of dimension $t=r-s$, obtain the local chart
\begin{align}
  \label{eq:local-chart} U \to \RR^{t}, \qquad x \mapsto (y_1,\ldots,y_t) (I_r - \Pi_{\cR d\Phi_{x'}}) (x-x')\,.
\end{align}

\subsection{Intrinsic Mean on a First Geodesic Principal Component for Kendall's Shape Spaces}\label{scn:Kendalls-shape-spaces}

Another application is given by the intrinsic mean on a geodesic principal component on a quotient space $Q=M/G$ due to an isometric action of a Lie group $G$ on a Riemannian manifold $M$. We treat here the prominent application of Kendall's shape spaces
\begin{align*}
  Q=\Sigma_m^k = \SSS^{m(k-1)-1}/SO(m)=\{[x]:x\in \SSS^{m(k-1)-1}\},\quad ,[x]=\{gx: g\in SO(m)\}
\end{align*}
where the space $\SSS^{m(k-1)-1}$ of unit size $m\times (k-1)$ matrices -- corresponding to normed and centered configuration of $k$ landmarks in $m$-dimensional Euclidean space -- is taken modulo the group of rotations in $m$-dimensional space. This space models all $m$-dimensional configurations of $k$ landmarks, not all coinciding, modulo similarity transformations, cf. \cite{DM98}. For $m\geq 3$ this space is no longer a manifold but decomposes into strata of manifolds of different dimensions, cf. \cite{KBCL99}. Geodesics on the unit sphere $\SSS^{m(k-1)-1}$, i.e. great circles, orthogonal to the the orbits $[x]$, called \emph{horizontal great circles}, project to geodesics in $Q=\Sigma_m^k$ such that the space $P_1$ of geodesics of $Q$ can be given the quotient structure of a stratified space
\begin{align*}
  P_1 = O^H\big(2,m\times (k-1)\big)/ \left( SO(m)\times O(2) \right)
\end{align*}
with the horizontal Stiefel manifold
\begin{align*}
  O^H\big(2,m\times (k-1)\big) = \{&(x,v) \in \RR^{m\times (k-1)}\times \RR^{m\times (k-1)}: \\
  &\tr(x^Tv)=0=\tr(x^Tx)-1 = \tr(v^Tv)-1, ~xv^T=vx^T\}
\end{align*}
and the orbits
\begin{align*}
  [[x,v]] = \{ (gx,gv) h: g\in SO(m), h \in O(2)\}\,
\end{align*}
for $(x,v) \in O^H\big(2,m\times (k-1)\big)$, cf. \cite[Theorem 5.2]{HHM07}. The action of $SO(m)$ is not free for $m\geq 3$, giving rise to a non-trivial stratified structure. As before in Section \ref{IMo1GPC:scn}, we set
\begin{align*}
  P_2=\{Q\},~P_0 = Q\,.
\end{align*}

Having geodesics, orthogonal projections $\pi_{Q,p} : Q\to p, p\in P_1$ can be defined, which are unique outside a set of intrinsic measure zero, cf. \cite[Theorem 2.6]{HHM07}. The geodesic distance $d$ on $Q$ gives rise to the geodesic distance $\rho_Q=\rho, \rho(q,p) = \min_{q'\in p}d(q,q')$ between a datum $q$ and a geodesic $p$. Similarly the induced intrinsic distance on $p$ gives rise to $\rho_p: p \times S_p \to [0,\infty)$ where $S_p = \{s\in p:s\in Q\}$ can be identified with $p$. For $d_1$ and $d_0$, for simplicity, not the canonical intrinsic distances on $P_1$ and $P_0=Q$ but quotient distances due to the embedding of $O^H\big(2,m\times (k-1)\big)\hookrightarrow \RR^{m\times (k-1)}\times \RR^{m\times (k-1)}$ and $\SSS^{m(k-1)-1}\hookrightarrow \RR^{m\times (k-1)}$, respectively, can be used, called the Ziezold distances, cf. \cite{H_ziez_geod_10}.

The generalized Fr\'echet mean corresponding to $\rho$ is the first geodesic principal component (GPC) $p$, the generalized Fr\'echet mean corresponding to $\rho_p$ is the the intrinsic mean on the first GPC, cf. \cite{HHM07,H_ziez_geod_10}. The latter is again a nested mean. With the \emph{horizontal projective bundle} over the unit sphere
\begin{align*}
  P^H \SSS^{m(k-1)-1} &= \mathop{\bigcup}_{x\in \SSS^{m(k-1)-1}}\{x\}\times P^H_x \SSS^{m(k-1)-1}\\
  P^H_x \SSS^{m(k-1)-1} &= \{\{v,-v\}: v\in T_x \SSS^{m(k-1)-1}, xv^T = vx^T, \tr(v^Tv) = 1\}
\end{align*}
we have the space
\begin{align*}
  T_{1,1} = \{(p,s): p \in P_1, s\in S_p\} \cong\{([[x,v]],[x]): x\in \SSS^{m(k-1)-1}, \{v,-v\} \in P^H_x \SSS^{m(k-1)-1}\}\,.
\end{align*}

The Principal Orbit Theorem (e.g. \cite[p. 199]{Bre72} states in particular, that $P_0=Q$ and $P_1$ have open and dense subsets $Q^*\subset Q, P_1^*\subset P_1$ that are manifolds, in our case smooth manifolds. This gives rise to the manifold
\begin{align*}
  T_{1,1}^* = \{(p,s) \in T_{1,1}: p\in P_1^*, s \in Q^*\}\,
\end{align*}
with local smooth coordinates near $(p',s') \in W\subset T_{1,1}^*$
\begin{align*}%\label{T_1-chart:eq}
  W \to P^H\SSS^{m(k-1)-1},~~ (p,s) \mapsto (x,\{v,-v\})
\end{align*}
where $x \in s$ and $(x,v)$ is a representative of $p$ i.o.p. to $(x',v')$, under the action of $SO(m)\times O(2)$, with $\tr(v^Tv') >0$. Here $(x',v')$ is an arbitrary but fixed representative of $p'$ such that $x'$ is a representative of $s'$. Along the lines of Lemma \ref{lem:smooth-op} one can show that optimal positioning is unique if $x'x^T + v'v^T$ has rank $\geq m-1$, which may be assumed for most realistic data scenarios. 

% where $x \in s$ is i.o.p. to $x'$ and $(x,v)$ is that representative of $p$ with $\tr(v^Tv') >0$ where $(x',v')$ is an arbitrary but fixed representative of $p'$ such that $x'$ is a representative of $s'$. Indeed, for $(\wv,\wx)$ representing $(p,s) \in T^*_{1,1}$ near $(p',s')\in T^*_{1,1}$, optimal positioning $\wx \mapsto g\wx = x$ is locally analytic and unique (due to a svd as in Lemma \ref{lem:smooth-op}) and $\tr(\wv^Tg^Tv)>0$ by continuity for $(p,s)$ sufficiently close to $(p',s')$. From $(v,x)\in U$, a local chart is obtained as in (\ref{eq:local-chart}), the first coordinates are residual coordinates, cf. \cite[Section 4.3.]{DM98}. 

Arguing as in Section \ref{IMo1GPC:scn}, with every local trivialization of the horizontal bundle
\begin{align*}
  HS^{m(k-1)-1} = \{(x,v) \in \SSS^{m(k-1)-1}\times \RR^{m(k-1)}: \tr(x^Tv) =0\mbox{ and } xv^T=vx^T\}
\end{align*}
comes a factoring chart.

\section{Assumptions for the Main Results}\label{assumptions:scn}
In this section we are back in the general scenario described in Section \ref{general_framework:scn}. We develop a set of assumptions necessary for the general results on asymptotic consistency and asymptotic normality in Section \ref{asymptotics:scn}. We then show that they are fulfilled in case of PNS/PNGS and the IMo1GPC of Kendall's shape spaces. 
\subsection{Assumptions for Strong Consistency}

% First we require the following basic assumption which is in particular fulfilled if all $P_j$ ($j=1,\ldots,m)$ are compact, say. Hence it holds automatically for PNS and PNGS as well as for IMo1GPCs on Kendall's shape spaces.

For the following assumptions suppose that $j \in\{ 1,\ldots,m-1\}$.
\begin{As}\label{Ass-meas-exists:as}
  For a random element $X$ in $Q$, assume that $\E[\rho_{p^{j}}(\pi_{f}\circ X,s)^2]<\infty$ for all BNFDs $f$ ending at $p^j$, $s\in S_{p^j}$.
\end{As}

In order to measure a difference between $s\in S_p$ and $s' \in S_{p'}$ for $p,p' \in P_j$ define the orthogonal projection of $s\in S_p$ onto $S_{p'}$ as
\begin{align*}
  S^s_{p'} =\argmin_{s'\in S_{p'}} \limits d_{j-1}(s,s')\,.
\end{align*}
In case of PNS this is illustrated in Figure \ref{pns_illustration:fig} (a).

\begin{As}\label{projection-unique:as}
  For every $s\in S_p$ there is $\delta>0$ such that
  \begin{align*}
    |S_{p'}^s| = 1
  \end{align*}
  whenever $p,p'\in P_j$ with $d_j(p,p')<\delta$.
\end{As}

\begin{figure}[h!]
  \centering
  \subcaptionbox{Nested projection}[0.48\textwidth]{\includegraphics[width = 0.45\textwidth, clip=true, trim=3.5cm 1cm 3cm 1cm]{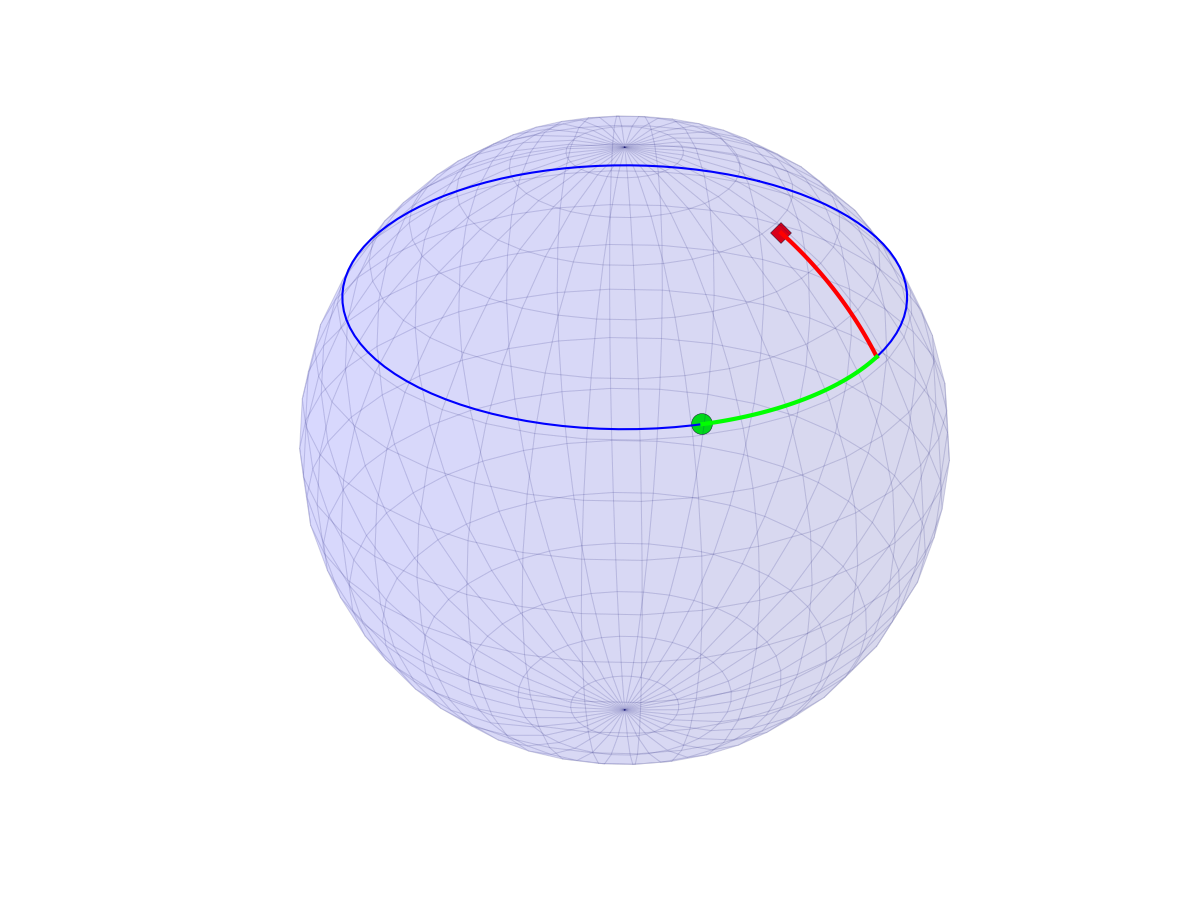}}
  \subcaptionbox{Projection of descriptors}[0.48\textwidth]{\includegraphics[width = 0.45\textwidth, clip=true, trim=3.5cm 1cm 3cm 1cm]{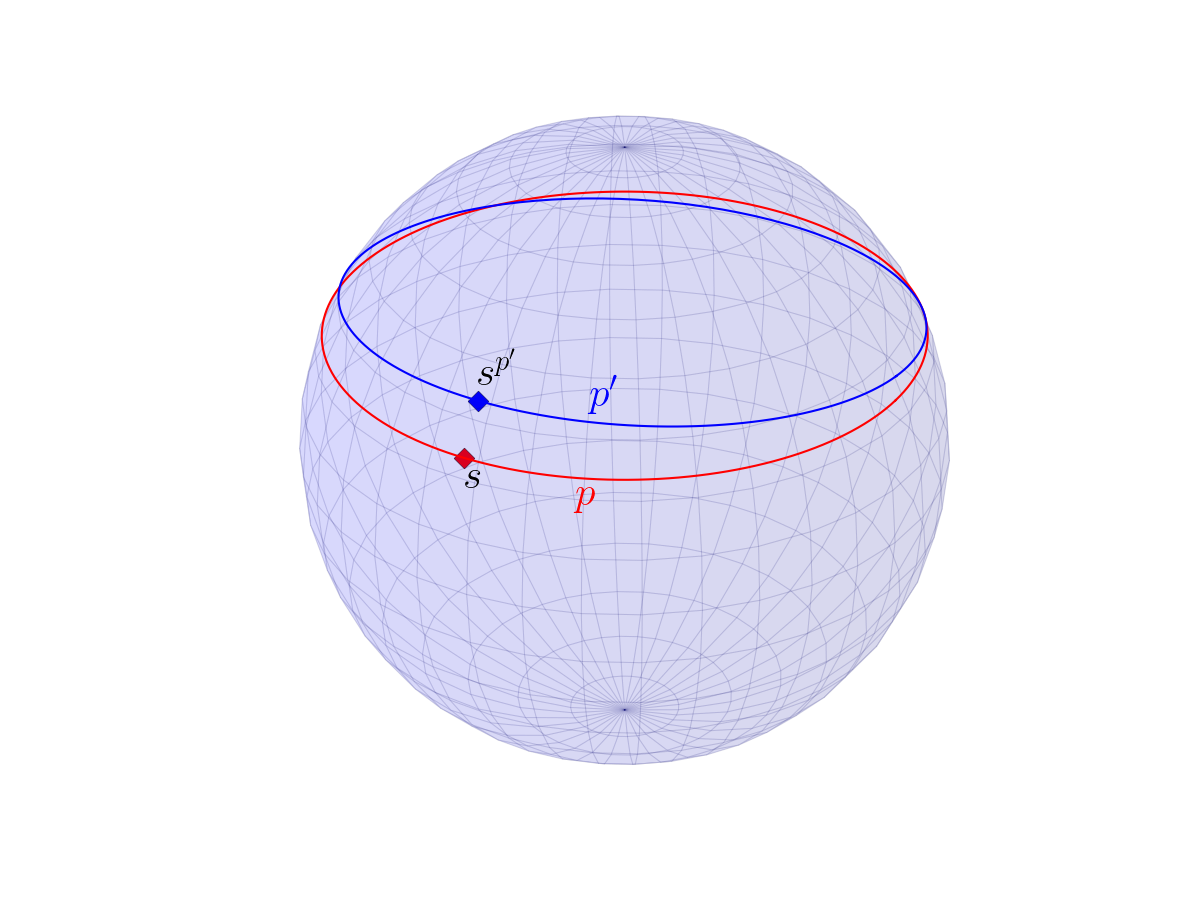}}
  \caption{\it PNS illustration. Left: Projection of $X$ (red) in $Q = \SSS^2$ onto small circle $p$ (blue) and further onto $s$ (green). Right: Projection $s^{p'}$ (blue) onto $S_{p'}$ (which is $p'$ in this case) of $s$ (red) on $S_{p}$ (which is $p$ in this case).\label{pns_illustration:fig}} 
\end{figure}

For $s\in S_p$ and $p,p'\in P_j$ sufficiently close let $s^{p'} \in S_{p'}^s$ be the unique element. Note that in general 
\begin{align*}
  (s^{p'})^p \neq s\,.
\end{align*}
In the following assumption, however, we will require that they will uniformly not differ too much if $p$ is close to $p'$. Also, we require that $s^{p'}$ and $s$ be close.

\begin{As}\label{projection-close:as}
  For $\epsilon >0$ there is $\delta>0$ such that
  \begin{align*}
    d_{j-1}(s^{p'},s) <\epsilon\mbox{ and }d_{j-1}\big((s^{p'})^p,s\big)<\epsilon\quad\forall s \in S_p
  \end{align*}
  whenever $p,p'\in P_j$ with $d_j(p,p')<\delta$.
\end{As}

We will also require the following assumption, which, in conjunction with Assumption \ref{projection-close:as}, is a consequence of the triangle inequality, if $d_{j-1}$ is a metric.

\begin{As}\label{almost-triangle:as}
  Suppose that $d_j(p_n,p) \to 0$ and $d_{j-1}(s_n,s)\to 0$ with $p,p_n \in P_j$ and $s\in S_p, s_n\in S_{p_n}$. Then also
  \begin{align*}
    d_{j-1}(s_n,s^{p_n}) \to 0
  \end{align*}
\end{As}

Moreover, we require uniformity and coercivity in the following senses.
\begin{As}\label{uniform-link:as}
  For all $\epsilon >0$ there are $\delta_1,\delta_2>0$ such that
  \begin{align*}%\label{uniform-link}
    \Big|\rho_{p}\big(\pi_f(q),s\big) - \rho_{{p'}}\big(\pi_{f'}(q),s'\big)\Big| <\epsilon \quad \forall q\in Q
  \end{align*}
  for all BNFDs $f,f'\in T_{m-1,m-j-1}$ ending in $p,p'\in P_j$, respectively, with $d(f,f')<\delta_1$ and $s\in S_p, s'\in S_{p'}$ with $d_{j-1}(s,s') < \delta_2$.
\end{As}

\begin{As}\label{coercivity:as}
  If $p_n,p \in P_{j}$ and $s_n\in S_{p_n}, s\in S_{p}$ with $d_{j-1}(s_n,s) \to \infty$, then for every $C>0$ we have that
  \begin{align*}
    \rho_{p_n}(\pi_{f_n}q,s_n) \to \infty
  \end{align*}
  for every $q\in Q$ with $\rho_{p}(\pi_fq,s) < C$ and BNFDs $f,f_n\in T_{m-1,m-j-1}$ ending at $p,p_n$ respectively.
\end{As}

\begin{Rm}\label{compact:rm}
 Due to continuity, Assumptions \ref{Ass-meas-exists:as} and \ref{uniform-link:as} hold if $Q$ is compact and Assumption \ref{coercivity:as} if each $P_j$ is compact. % and each $\rho_p$ varies continuously in $p\in P_j$.
\end{Rm}

\begin{Th}
  Assumptions \ref{Ass-meas-exists:as} -- \ref{coercivity:as} hold for PNS and PNGS for all $j = 1,\ldots,m-1$. Moreover, each $P_j$ is $d_j$-Heine Borel for $j=0,\ldots,m$.
\end{Th}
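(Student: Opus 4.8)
The plan is to verify each of Assumptions \ref{Ass-meas-exists:as}--\ref{coercivity:as} in turn, together with the $d_j$-Heine-Borel property, exploiting that the relevant spaces are (up to the quotient) compact. First I would record the key compactness facts: for $j = 1, \ldots, m-1$ the representing space $M_j = O(m-j, m+1) \times \overline{\DD^{m-j}}$ (closure) is compact for PNGS (where $\DD^{m-j} = \{0\}$) and has compact closure for PNS, so $P_j = M_j / O(m-j)$ is compact in the quotient (Ziezold) metric; likewise $P_0 = \SSS^m$ and $P_m = \{\SSS^m\}$ are compact, and $Q = \SSS^m$ is compact. Since a closed $d_j$-bounded subset of a compact metric space is compact, this immediately gives the $d_j$-Heine-Borel claim for all $j$, and by Remark \ref{compact:rm} it disposes of Assumptions \ref{Ass-meas-exists:as} and \ref{uniform-link:as} (the integrand $\rho_{p}(\pi_f(q),s)$ is continuous in all arguments jointly on a compact set, hence bounded and uniformly continuous) and Assumption \ref{coercivity:as} (the hypothesis $d_{j-1}(s_n,s) \to \infty$ is vacuous, so there is nothing to prove).

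The remaining work is Assumptions \ref{projection-unique:as}, \ref{projection-close:as} and \ref{almost-triangle:as}, which concern the metric projection $S_{p'}^s = \argmin_{s' \in S_{p'}} d_{j-1}(s,s')$. Here I would use the explicit description of $S_p$: for $p = [v^T, \alpha]^T \in P_j$, the set $S_p \subset P_{j-1}$ consists of the classes $[v, v_{m-j+1}; \alpha^T, \alpha_{m-j+1}]$ with $v^T v_{m-j+1} = 0$ and $\alpha_{m-j+1}^2 < 1 - \|\alpha\|^2$ (with the boundary/degenerate conventions for $j=1$ and PNGS). The first step is to compute $d_{j-1}(s, s')^2$ for $s \in S_p$, $s' \in S_{p'}$ in terms of representatives in optimal position (Lemma \ref{lem:smooth-op}): expanding $\|z - z'R\|^2$ as in the proof of Lemma \ref{lem:Ziezold-metric}, one gets an expression of the form $2(j'+2) + \|\alpha'\|^2 + \|\alpha''\|^2 - 2\tr(\cdot) - \ldots$ whose minimization over the extra frame direction $v_{m-j+1}$ and over $\alpha_{m-j+1}$ is, when $d_j(p,p')$ is small, a strictly convex problem (the Hessian is nondegenerate near the diagonal since $p = p'$ gives a unique minimizer and an argument like Lemma \ref{lem:smooth-op}(i) shows local uniqueness persists). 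This yields $|S_{p'}^s| = 1$ for $d_j(p,p') < \delta$, i.e. Assumption \ref{projection-unique:as}, with the argument uniform in $s$ by compactness of $S_p$ for fixed $p$ and a covering argument over $P_j$.

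For Assumption \ref{projection-close:as} I would argue by the same continuity-plus-compactness scheme: the map $(p, p', s) \mapsto s^{p'}$ is continuous on the (compact) locus where it is single-valued, and on the diagonal $p = p'$ we have $s^{p} = s$, so $d_{j-1}(s^{p'}, s)$ and $d_{j-1}((s^{p'})^p, s)$ are continuous functions vanishing on $\{p = p'\}$; by a standard uniform-continuity argument on the compact set $\{(p,p',s): d_j(p,p') \le \delta_0, s \in S_p\}$ one gets the required $\delta$ uniformly in $s$. Finally, Assumption \ref{almost-triangle:as} follows because $d_{j-1}$ is in fact the Ziezold quotient metric, hence a genuine metric satisfying the triangle inequality (Lemma \ref{lem:Ziezold-metric}), so $d_{j-1}(s_n, s^{p_n}) \le d_{j-1}(s_n, s) + d_{j-1}(s, s^{p_n})$, the first term $\to 0$ by hypothesis and the second $\to 0$ by Assumption \ref{projection-close:as} applied to $p_n \to p$. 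The main obstacle is making the uniqueness and uniformity of the metric projection in Assumptions \ref{projection-unique:as} and \ref{projection-close:as} rigorous near the degenerate strata, in particular near $\|\alpha\| \to 1$ and for the $j = 1$ and PNGS boundary conventions where $S_p$ sits in the boundary of the natural parameter domain; there one must check that the strict convexity / nondegeneracy of the projection problem does not break down, which is where the explicit formula \eqref{rho-p-PNS:eq} and the optimal-positioning Corollary are needed in full.
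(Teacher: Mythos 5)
Your overall strategy coincides with the paper's: Assumptions \ref{Ass-meas-exists:as}, \ref{uniform-link:as} and \ref{coercivity:as} are dispatched via Remark \ref{compact:rm}, Assumption \ref{almost-triangle:as} follows from Assumption \ref{projection-close:as} together with the triangle inequality for the Ziezold metric (Lemma \ref{lem:Ziezold-metric}), and the heart of the matter is the uniqueness and continuity of the metric projection $s\mapsto s^{p'}$, attacked through representatives in optimal position and the local uniqueness of the optimal $R$ from Lemma \ref{lem:smooth-op}. The one substantive difference is that where you invoke ``strict convexity plus a continuity-and-compactness scheme,'' the paper actually solves the projection problem in closed form: after localizing the minimizers by continuity and splitting the optimal $R=(B,b)$, a Lagrange-multiplier computation yields the unique minimizer $x=(I_{m+1}-v'{v'}^T)wb/\|(I_{m+1}-v'{v'}^T)wb\|$ and $y=\beta^Tb$, so that $s^{p'}$ is an explicit continuous function of its arguments; Assumptions \ref{projection-unique:as}--\ref{almost-triangle:as} then all follow at once from this formula.

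The concrete gap in your write-up is the repeated appeal to compactness of $P_j$ and of $S_p$ in the PNS case. There $M_j=O(m-j,m+1)\times\DD^{m-j}$ with the \emph{open} disc $\DD^{m-j}$, so neither $M_j$ nor $P_j$ nor $S_p$ is compact, and your hedge that $M_j$ ``has compact closure'' does not yield the stated conclusion that ``$P_j$ is compact in the quotient metric.'' This matters in two places. First, the uniformity in $s$ required by Assumption \ref{projection-close:as} (a single $\delta$ valid for all $s\in S_p$) is justified in your proposal only by ``compactness of $S_p$,'' which fails for PNS since $\alpha_{m-j+1}$ ranges over an open interval; the paper obtains this uniformity from the explicit formula above rather than from a covering argument. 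Second, the claim that ``a closed $d_j$-bounded subset of a compact metric space is compact'' does not apply to the PNS $P_j$, where sequences with $\|\alpha_n\|\to1$ must be handled separately. (Assumption \ref{coercivity:as} survives, since boundedness of $P_{j-1}$ already makes the hypothesis $d_{j-1}(s_n,s)\to\infty$ vacuous, and the PNGS case is genuinely compact.) Your closing remark — that the degenerate strata $\|\alpha\|\to1$ and the $j=1$/PNGS boundary conventions are where the real work lies — is exactly right; the proposal just does not carry that work out, whereas the paper's closed-form solution of the constrained minimization is what substitutes for it.
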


\begin{proof}
  Recall from Proposition \ref{PNS-BNDS:prop} that $\pi_f$ only depends on the final descriptor at which $f$ ends. For PNS and PNGS we use the notation introduced in Section \ref{setup:scn} and show first Assumption \ref{projection-unique:as}. 
  Let $(v^T,\alpha)^T$ and $({v'}^T,{\alpha'})^T$ be representatives of $p,p'\in P_j$ in optimal position and $ s = [w^T,\beta]^T\in S_p$ with $w=(v,v_{m-j+1}) \in O(m-j+1,m+1)$ and $\beta^T = (\alpha^T,\alpha_{m-j+1})$. Moreover consider a candidate element
  \begin{align*}
  s^* = \twoclass{v',x}{{\alpha'}^T,y} \in S_{p'},\quad x\in \SSS^{m+1}, x^Tv' =0,~\left\{\begin{array}{ll}y\in (-1,1), y^2 + \|\alpha'\|^2 < 1&, \mbox{ for PNS}, j>1\\
	  y=0&, \mbox{ for PNGS}, j>1\\
    y\in \{-1,1\}, y^2 + \|\alpha'\|^2 = 1&, \mbox{ for PNS/PNGS}, j=1\\
    \end{array}\right.
  \end{align*} 
  and its squared distance to $s$, 
  \begin{align*}
  d^2_j(s^*,s) &= \mathop{\min}_{R \in O(m-j)} \left\| \twovec{w}{\beta^T} R -  \twovec{v',x}{{\alpha'}^T,y}\right\|^2
  \\
  &=2(m-j+1) + \|\beta\|^2 + \|\alpha'\|^2 + \|y\|^2 - 2 \mathop{\max}_{R \in O(m-j)}\tr(H R)
  \end{align*}
  with
  \begin{align*}
  H = \left(\begin{array}{cc} v^T{v'} + \alpha{\alpha'}^T & v^Tx + \alpha y\\ v_{m-j+1}^Tv' + \alpha_{m-j+1} {\alpha'}^T& v_{m-j+1}^Tx + \alpha_{m-j+1} y\end{array}\right)\,.
  \end{align*}

  With the continuous function 
  $F(v',\alpha') = \min_{x,y} d^2_j(s^*,s)$,
%   \begin{align*}
%     F(v',\alpha') = \min_{x,y} d^2_j(s^*,s)\,.
%   \end{align*}
    we have $F(v,\alpha) = 0$, which, by Lemma \ref{lem:Ziezold-metric}, is uniquely assumed for $x=v_{m-j+1}$ and $y=\alpha_{m-j+1}$.
  Due to continuity, for $(v',\alpha')$ sufficiently close to $(v,\alpha)$ all minimizers $(x,y)$ of $F(v',\alpha')$ are in a neighborhood of $(v_{m-j+1},\alpha_{m-j+1})$ and there, $H$ is full rank and hence, arguing as in the proof of (i) of Lemma \ref{lem:smooth-op}, the extremal $R\in O(m-j)$ is uniquely determined. Let us write $R=(B,b)$ to obtain
   \begin{align*}
  d^2_j(s^*,s) &=\min_{\begin{array}{c} B \in O(m-j,m-j+1)\\ 0=B^Tb,b\in \SSS^{m-j}\end{array}} \left( \left\| \twovec{w}{\beta^T} B -  \twovec{v'}{{\alpha'}^T}\right\|^2 +  \left\| \twovec{w}{\beta^T}b -  \twovec{x}{y}\right\|^2\right)\,.
  \end{align*}
  As $b$ is unique, minimizing the above for $x$ is equivalent to minimizing $\|wb-x\|^2$ under the constraining conditions $\|x\|^2=1$ and ${v'}^Tx=0$ which yields the necessary equation
  $$ x - wb + v' \lambda + \mu x =0, \mbox{ with suitable Lagrange multipliers }\lambda \in \RR^{m-j},\mu \in \RR\,.$$
  Right multiplication with ${v'}^T$ yields at once $\lambda = {v'}^Twb$ such that $x$ is uniquely determined by
  $$ x = \frac{(I_{m+1}-v'{v'}^T)wb}{\|(I_{m+1}-v'{v'}^T)wb\|}\,.$$
  Indeed, $x$ is well defined because it is in a neighborhood of $v_{m-j+1}$ and hence $b$ in a neighborhood of $(0,\ldots,0,1)^T$.
  More simply, without Lagrange minimization, we obtain $y = \beta^Tb$. 
  
  For $p,p'$ sufficiently close, this yields a unique $s^*=s^{p'}$ minimizing $d_j(s^*,s)$ over $s^* \in S_{p'}$, yielding at once Assumptions \ref{projection-unique:as}, \ref{projection-close:as} and \ref{almost-triangle:as} (because $d_j$ is a metric).

  Due to Remark \ref{compact:rm}, Assumptions \ref{Ass-meas-exists:as}, \ref{uniform-link:as} and %follows at once from (\ref{rho-p-PNS:eq}). Finally Assumption 
  \ref{coercivity:as} hold. %s because all distances $d_j$ are bounded. 
  Because each $P_j$ is a finite dimensional manifold and $d_j$ is a topologically compatible metric, $P_j$ is $d_j$-Heine Borel.
\end{proof}
\begin{Th}
  For IMo1GPCs on Kendall's shape spaces $Q=\Sigma_m^k$, $0<m<k$, Assumptions \ref{Ass-meas-exists:as} -- \ref{coercivity:as} hold for $j=1$. Moreover, $P_j$ is $d^j$ Heine-Borel for $j=0,1$.
\end{Th}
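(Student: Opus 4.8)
The argument parallels the proof of the preceding theorem for PNS/PNGS, with compactness carrying most of the weight. First I would record that the data space $Q=\Sigma_m^k=\SSS^{m(k-1)-1}/SO(m)$ is compact, being the quotient of a compact sphere by the compact group $SO(m)$, and that $P_1=O^H\big(2,m\times(k-1)\big)/\big(SO(m)\times O(2)\big)$ is likewise compact: the horizontal Stiefel manifold $O^H\big(2,m\times(k-1)\big)$ is a closed subset of the compact Stiefel manifold $O\big(2,m\times(k-1)\big)$, cut out by the additional closed conditions $xv^T=vx^T$, hence compact, and $SO(m)\times O(2)$ is compact. Since $d_0$ and $d_1$ are quotient metrics of the ambient Euclidean metrics under isometric group actions, they satisfy the triangle inequality and are definite, as in Lemma \ref{lem:Ziezold-metric}, and they induce the quotient topologies; thus $P_0=Q$, $P_1$ (and $P_2=\{Q\}$) are compact metric spaces, so every closed $d^j$-bounded subset is compact, giving the Heine--Borel property for $j=0,1$. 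By Remark \ref{compact:rm}, compactness of $Q$ yields Assumptions \ref{Ass-meas-exists:as} and \ref{uniform-link:as}, and compactness of all $P_j$ yields Assumption \ref{coercivity:as}.

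It remains to verify Assumptions \ref{projection-unique:as}--\ref{almost-triangle:as} for $j=1$, where $S_p$ is identified with the geodesic $p\subset Q$ itself and $d_{j-1}=d_0$. Fix $p\in P_1$ and $s\in S_p\cong p$; since $d_0$ is definite, $S^s_p=\{s\}$ is the unique minimizer of $s'\mapsto d_0(s,s')$ over the compact geodesic $p$. To obtain uniqueness and continuity for nearby $p'$, I would lift to the pre-shape sphere: choosing a horizontal lift $x$ of $s$ and a horizontal great circle $t\mapsto \cos t\,x'+\sin t\,v'$ lifting $p'$ (with $(x',v')$ in the horizontal Stiefel manifold), the squared Ziezold distance from $s$ to the point of $p'$ at parameter $t$ is $2-2\max_{g\in SO(m)}\tr\!\big(x^Tg(\cos t\,x'+\sin t\,v')\big)$, so $s^{p'}$ is found by jointly maximizing this expression over $(g,t)\in SO(m)\times\SSS^1$. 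With lifts chosen compatibly, the maximizer at $p'=p$ has parameter $t_0$ (the one with projected point $s$), and under the generic full-rank condition on the configuration of $s$ -- the analogue of the rank hypothesis in Lemma \ref{lem:smooth-op}, cf. Section \ref{scn:Kendalls-shape-spaces} -- the objective has a nondegenerate maximum there (the $t$-Hessian being essentially $2$ as on any great circle), so in particular the optimal parameter, equivalently the projected point $s^{p'}$, is locally unique. The implicit function theorem then produces, for $p'$ in a neighborhood of $p$, a unique nearby maximizer depending smoothly on $p'$ and on $s$, with $s^{p'}\to s$ and $(s^{p'})^p\to s$ as $p'\to p$; a standard compactness/continuity argument over the compact $P_1$ and $p$ rules out competing far-away minimizers and upgrades the convergence to uniformity in $s\in S_p$. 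This gives Assumptions \ref{projection-unique:as} and \ref{projection-close:as}; Assumption \ref{almost-triangle:as} then follows from $d_0(s_n,s^{p_n})\le d_0(s_n,s)+d_0(s,s^{p_n})$ together with Assumption \ref{projection-close:as}, exactly as in the remark preceding it.

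The main obstacle is the quotient/stratified geometry: for $m\ge 3$ the $SO(m)$-action is not free, $Q$ and $P_1$ are only stratified spaces, and both the optimal-rotation map and optimal positioning are guaranteed single-valued only on a generic (full-rank) locus. The clean route is to carry out the implicit-function-theorem step on the open dense manifold parts $Q^*\subset Q$ and $P_1^*\subset P_1$ provided by the Principal Orbit Theorem (\cite[p.~199]{Bre72}), using the local trivializations of the horizontal bundle and the factoring charts of Section \ref{scn:Kendalls-shape-spaces}, and then to recover the full statements on all of $P_1$ by continuity/density together with the purely metric compactness arguments above, which need no smoothness -- one only ever minimizes continuous functions over compact geodesics. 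Checking that no pathology hides on the singular strata, i.e. that the projection $S^s_{p'}$ stays single-valued there once $p'$ is close enough to $p$, is the one place where genuine care is required.
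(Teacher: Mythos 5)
Your treatment of the Heine--Borel property and of Assumptions \ref{Ass-meas-exists:as}, \ref{uniform-link:as} and \ref{coercivity:as} via compactness and Remark \ref{compact:rm} is exactly the paper's argument. Where you diverge is on Assumptions \ref{projection-unique:as}--\ref{almost-triangle:as}: the paper disposes of these in one line by invoking the neighborhood construction in Claim II of the proof of \cite[Theorem A.5]{HHM07}, which produces, for each geodesic $p$, a neighborhood of its point set on which the orthogonal projection onto $p$ is well defined and continuous in $p$; since $s\in p$ lies in such a neighborhood of any sufficiently close $p'$, uniqueness and continuity of $s^{p'}$ follow, and Assumption \ref{almost-triangle:as} then comes from the triangle inequality for the metric $d_0$. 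You instead rebuild that result from scratch: lifting to the pre-shape sphere, writing $d_0(s,\gamma(t))^2=2-2\max_{g}\tr\big(x^Tg(\cos t\,x'+\sin t\,v')\big)$ and running an implicit-function-theorem argument at the nondegenerate maximum. That computation is sound on the principal stratum and has the merit of being self-contained and of exhibiting the smooth dependence explicitly (which is closer in spirit to what the paper does for PNS/PNGS in the preceding theorem), but it costs you the rank hypothesis from the analogue of Lemma \ref{lem:smooth-op}.

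The one genuine hole is the point you yourself flag at the end: for $m\geq 3$ the $SO(m)$-action is not free, and your IFT step only applies on the open dense manifold parts $Q^*$, $P_1^*$ of Section \ref{scn:Kendalls-shape-spaces}. ``Recovering the full statement by continuity/density'' does not work for a \emph{uniqueness} claim --- uniqueness of a minimizer is not a closed condition, so density of the good locus gives you nothing on the singular strata, and Assumption \ref{projection-unique:as} is quantified over \emph{every} $s\in S_p$ and every nearby $p'$. This is precisely the case that the citation of \cite[Theorem A.5]{HHM07} is there to cover: the neighborhood constructed in Claim II of that proof is obtained by a metric-geometric argument on the quotient and does not require the orbit through $s$ to be principal. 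So either import that result, as the paper does, or restrict the claim to the principal stratum; as written, your argument does not establish Assumptions \ref{projection-unique:as} and \ref{projection-close:as} on all of $P_1$.
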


\begin{proof}
  Assumption \ref{projection-unique:as} follows at once from the compactness of $Q$, hence the geodesics $p\subset Q$ are also compact and the proof of \cite[Theorem A.5]{HHM07}, as there, in Claim II, a neighborhood of a geodesic $p$ is constructed, restricted to which the orthogonal projection $\pi_p$ is well defined and continuous in $p$. Compactness and continuity also imply Assumptions \ref{projection-close:as} and \ref{almost-triangle:as}. Assumptions \ref{Ass-meas-exists:as}, \ref{uniform-link:as} and \ref{coercivity:as} follow from Remark \ref{compact:rm}.
\end{proof}

% \bibliographystyle{Chicago}
% \bibliography{superbib}
% % \bibliographystyle{../../../BIB/Chicago}
% % \bibliography{../../BIB/diffgeo,../../BIB/circular,../../BIB/stats,../../BIB/trees,../../BIB/machine,../../BIB/numerics,../../BIB/finger,../../BIB/shape,../../BIB/complex,../../BIB/zytos,../../BIB/comp_vis}
% \end{document}

\subsection{An Additional Assumption for Asymptotic Normality.}

Again, let $j\in\{1,\ldots,m-1\}$.
\begin{As}\label{as:clt-uniform-derivatives}
% \begin{As}\label{as:clt-mfd-struture}
  Assume that $T_{m-1,m-j}$ carries a smooth manifold structure near the unique BN population mean ${f'}^{j-1} = ( {p'}^{m-1},\ldots, {p'}^{j-1})$ such that there is an open set $W\subset T_{m-1,m-j}$, ${f'}^{j-1} \in W$ and a local chart
  \begin{align*}
    \psi: W\to \RR^{\dim(U)},~f^{j-1}=({p}^{m-1},\ldots, {p}^{j-1})\mapsto \eta\,.
  \end{align*}
% \end{As}
% 
% \begin{As}\label{as:clt-uniform-derivatives}
%   Under Assumption \ref{as:clt-mfd-struture} with local chart
%   \begin{align*}
%     \psi: W\to \RR^{\dim(W)},~f^{j-1}=({p}^{m-1},\ldots, {p}^{j-1})\mapsto \eta\,
%   \end{align*}
%   near the unique BN population mean ${f'}^{j-1} = ( {p'}^{m-1},\ldots, {p'}^{j-1})$ 
   Further, 
   assume that for every $l=j,\ldots,m$ the mapping
  \begin{align*}
    \eta \mapsto f^{l-1}\mapsto \rho_{p^{l}}(\pi_{f^l}\circ X, p^{l-1})^2:=\tau^l(\eta,X)
  \end{align*}
  has first and second derivatives, such that for all $l=j,\ldots,m$,
  \begin{align*}
    \cov\big[\grad_\eta \tau^l(\eta',X)\big]\mbox{, and }\mathbb E\big[\Hess_\eta \tau^l(\eta',X)\big]\,
  \end{align*}
  exist and are in expectation continuous near $\eta'$, i.e. for $\delta\to 0$ we have 
  \begin{align*}
    \EE\left[\mathop{\sup}_{\|\eta-\eta'\|<\delta} \left\|\grad_\eta \tau^l(\eta,X) - \grad_\eta \tau^l(\eta',X)\right\|\right]\:\to\:0\,,
    \\
    \EE\left[\mathop{\sup}_{\|\eta-\eta'\|<\delta}\left\|\Hess_\eta \tau^l(\eta,X) - \Hess_\eta \tau^l(\eta',X)\right\|\right]\:\to\:0\,.
  \end{align*}
  Finally, assume that the vectors $\EE\big[\grad_\eta \tau^{j+1}(\eta',X)\big],\ldots, \EE\big[\grad_\eta \tau^{m}(\eta',X)\big]$ are linearly independent.
\end{As}

\begin{Rm}
  For PNS and PNGS a global, manifold structure has been derived in Section \ref{sec:PNS-Framework} with projections (\ref{eq:blow-down}) (see also Proposition \ref{PNS-BNDS:prop}) and distances (\ref{rho-p-PNS:eq}) smooth away from singularity sets. For IMo1GPCs on Kendall's shape spaces, this has been provided in Section \ref{scn:Kendalls-shape-spaces}, cf. also \cite{H_ziez_geod_10}.

  In general, however, it is unclear under which circumstances (if the second derivatives are continuous in both arguments where $X$ is supported in a compact set, then convergence to zero holds not only in expectation but also a.s.) the three assumptions above, uniqueness, existence of first and second moments of second and first derivatives and their continuity in expectation are valid. Even for the much simpler case of intrinsic means on manifolds this is only very partially known, cf. the discussion in \cite{HH13}. It seems that only for the most simple non-Euclidean case of intrinsic means on circles the full picture is available (\cite{HH15}). Recently, rather generic conditions for densities have been derived by \cite{BL16}, ensuring $\sqrt{n}$-Gaussian asymptotic normality.
  
  The condition on linear independence is rather natural for realistic scenarios where each constraining condition adds a new constraint, not covered by the previous, as introduced after Corollary \ref{cor:lambda-lem2}. For example, if charts factor, then with decreasing $l$, every constraning condition results in conditions on new coordinates. %and then uniqueness of BN means implies linear independence.    
%   The condition on linear independence is rather natural for realistic scenarios where each constraining condition adds a new constraint, not covered by the previous, as introduced after Corollary \ref{cor:lambda-lem2}.
\end{Rm}

\section{The Main Results}\label{main-results:scn}

\subsection{Asymptotic Theorems}\label{asymptotics:scn}

\begin{Th}\label{SC:thm} 
  Let $k\in \{0,\ldots,m-1\}$ and consider random data $X_1,\ldots,X_n \stackrel{iid}{\sim }X$ on a data space $Q$ admitting BN descriptor families from $P_{m}$ to $P_k$, unique BN population means $\{p^{m},\ldots,p^k\}$ and BN sample means $\{E_n^{f_n^{m}},\ldots,E_n^{f_n^k}\}$ due to a measurable selection $p^{j}_n \in E^{f_n^{j}}_n$ giving rise to BNFDs $f^j_n=\{p^l_n\}_{l=j}^{m}$, $j=k,\ldots,m$. If Assumptions \ref{Ass-meas-exists:as} -- \ref{coercivity:as} are valid for all $j=k,\ldots,m-1$, and every $\overline{\cup_{n=1}^\infty E^{f^j_n}_n}$ is a.s. $d_j$-Heine Borel ($j=k,\ldots,m$) then $\{E_n^{f_n^{m}},\ldots,E_n^{f_n^k}\}$ converges a.s. to $\{p^{m},\ldots,p^k\}$ in the sense that $\exists \Omega' \subset \Omega$ measurable with $\Prb(\Omega')=1$ such that for all $j=k,\ldots,m$, $\epsilon >0$ and $\omega\in \Omega'$, $\exists N =N(\epsilon,\omega)$ with
  \begin{align}\label{BP-SC:eq}
    \bigcup_{r=n}^\infty E^{f_r^j}_r\subset \{p\in P_j: d_j(p^j,p)\leq \epsilon\}\quad \forall n \geq N,~ \omega\in \Omega'\,.
  \end{align}	
\end{Th}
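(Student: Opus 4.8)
The plan is to prove the strong consistency statement \eqref{BP-SC:eq} by downward induction on $j$, from $j=m$ (where the statement is trivial since $p^m = E_n^{f_n^m} = \{Q\}$) down to $j=k$, following the classical M-estimation consistency arguments of \cite{Z77} and \cite{BP03} adapted to the nested, constrained setting. The induction hypothesis at step $j$ is that the BNFDs $f_n^j = \{p_n^l\}_{l=j}^m$ converge a.s. to $f^j = \{p^l\}_{l=j}^m$ in the sense of \eqref{BP-SC:eq} for all indices $\geq j$; we then must show the same for $p_n^{j-1} \in E_n^{f_n^{j-1}}$, i.e. that any measurable selection of sample minimizers over $S_{p_n^j}$ accumulates only at $p^{j-1}$, the unique minimizer over $S_{p^j}$ of the population objective.

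First I would set up the relevant objective functions: for a BNFD $f$ ending at a $j$-level descriptor $p$ and $s \in S_p$, write $F_p(s) = \EE[\rho_p(\pi_f \circ X, s)^2]$ and $F_{n,p}(s) = \frac1n\sum_{i=1}^n \rho_p(\pi_{f_n}\circ X_i, s)^2$, where by Proposition \ref{PNS-BNDS:prop}-type reasoning (abstractly, this is built into the definitions) $\pi_f$ depends only on the final descriptor. Using the projection maps $s \mapsto s^{p'}$ of Assumption \ref{projection-unique:as}, one transports candidate minimizers over $S_{p_n^j}$ back to $S_{p^j}$ so that everything lives on one fixed space near $p^{j-1}$. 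The key analytic ingredients are: (a) a uniform strong law of large numbers — $\sup_{s}|F_{n,p_n^j}(s) - F_{p^j}(s^{p^j})|$ tends to zero on compacta, which follows from the uniform continuity Assumption \ref{uniform-link:as} together with the consistency $d^j(f_n^j, f^j)\to 0$ from the induction hypothesis, plus the pointwise SLLN and an equicontinuity/bracketing argument (the continuity-in-expectation pieces are not needed here, only continuity and the finite second moment from Assumption \ref{Ass-meas-exists:as}); (b) a coercivity estimate from Assumption \ref{coercivity:as} showing sample minimizers cannot escape to infinity, so together with the $d_j$-Heine–Borel hypothesis on $\overline{\cup_n E_n^{f_n^{j-1}}}$ the minimizers eventually lie in a fixed compact $d_{j-1}$-bounded set; (c) the uniqueness of the population minimizer $p^{j-1}$ over $S_{p^j}$, which is part of the hypothesis ``unique BN population means.'' Combining (a)–(c) in the standard Wald/Ziezold way — any accumulation point $s^*$ of $p_n^{j-1}$ satisfies $F_{p^j}(s^*)\leq \liminf F_{n,p_n^j}(p_n^{j-1}) \leq \limsup F_{n,p_n^j}((p^{j-1})^{p_n^j}) = F_{p^j}(p^{j-1})$, forcing $s^* = p^{j-1}$ — then with Assumptions \ref{projection-close:as} and \ref{almost-triangle:as} to handle the discrepancy between $S_{p_n^j}$ and $S_{p^j}$, gives the desired inclusion for index $j-1$.

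The delicate point, and the main obstacle, is that the domain $S_{p_n^j}$ of the $n$-th sample problem is itself random and only converges to $S_{p^j}$ in the weak, non-metric sense controlled by Assumptions \ref{projection-unique:as}–\ref{almost-triangle:as}; one must be careful that the projections $s\mapsto s^{p_n^j}$ are well-defined (single-valued) eventually a.s. — which needs $d_j(p_n^j, p^j) < \delta$, available only on the event of convergence at level $j$ from the induction hypothesis — and that comparisons like $F_{n,p_n^j}((p^{j-1})^{p_n^j})$ versus $F_{n,p_n^j}(p_n^{j-1})$ are legitimate, i.e. that $(p^{j-1})^{p_n^j}$ indeed lies in $S_{p_n^j}$. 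A second subtlety is measurability: one must ensure a measurable selection $p_n^{j-1}\in E_n^{f_n^{j-1}}$ exists and that the ``bad'' $\omega$-sets at each of the finitely many levels can be unioned into a single null set, yielding the common $\Omega'$ with $\Prb(\Omega')=1$ and the level-uniform $N(\epsilon,\omega)$. The detailed verification of these domain-matching and measurability issues is exactly what makes the full proof ``elaborate,'' which is why it is deferred to the Appendix; here I would record the induction skeleton and cite Assumptions \ref{Ass-meas-exists:as}–\ref{coercivity:as} and the Heine–Borel hypotheses at each invocation.
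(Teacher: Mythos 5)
Your proposal is correct and follows essentially the same route as the paper: backward induction on $j$, transporting candidates via $s\mapsto s^{p_n}$, a Ziezold-style strong law (pointwise on a dense countable set plus the continuity estimates from Assumptions \ref{projection-close:as} and \ref{uniform-link:as}, which is the paper's packaging of your ``uniform SLLN on compacta''), the $\liminf$/$\limsup$ sandwich identifying accumulation points with the unique population minimizer, and coercivity plus the Heine--Borel hypothesis to exclude escape to infinity, yielding the Bhattacharya--Patrangenaru form \eqref{BP-SC:eq}. The delicate points you flag (single-valuedness of the projections only once $d_j(p_n^j,p^j)<\delta$, membership of $(p^{j-1})^{p_n^j}$ in $S_{p_n^j}$, and a common null set across the finitely many levels) are exactly the issues the appendix argument handles.
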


\begin{proof}
  We proceed by backward induction on $j$. The case $j=m$ is trivial and the case $j=m-1$ has been covered by Theorems A.3 and A.4 from \cite{H_ziez_geod_10}.
  
  Now suppose that (\ref{BP-SC:eq}) have been established for $j+1 \in \{k+1,\ldots,m\}$.
  Set $P=P_{j+1}, p = p^{j+1}, f=\{p^{m},\ldots,p^{j+1}\}, p_n = p_n^{j+1}, f_n=\{p_n^{m},\ldots,p_n^{j+1}\}$ and for an arbitrary BNFD $f'$ ending at $p'\in P_{j+1}$
  \begin{align*}
    F_f(s) &= \mathbb E\big[\rho_{p}(\pi_f\circ X,s)^2\big],~s\in S_p& F_{n,f'}(s) &= \frac{1}{n}\sum_{i=1}^n \rho_{p'}(\pi_{f'}\circ X_i,s)^2,~s\in S_{p'}\\
    \lw_f &= \inf_{s\in S_p} F_f(s),& \lw_{n,f'} &= \inf_{s\in S_{p'}} F_{n,f'}(s)
  \end{align*}
  Then, $F_f(s) <\infty$ for all $s\in S_p$, by hypothesis, and with $s^*=p^{j}$,
  \begin{align*}
    \{s^*\} = \argmin_{s\in S_p}F_f(s),&& E_n^{f_n} = \argmin_{s\in S_{p_n}}F_{n,f_n}(s)\,. 
  \end{align*}
  To complete the proof we first show in the Appendix
  \begin{align}\label{Ziezold-sc:eq}
    \bigcap_{n=1}^\infty \overline{\bigcup_{r=n}^\infty E^{f_r}_r} \subset \{s^*\}\mbox{ a.s. }\,.
    \end{align}
  This is Ziezold's version of strong consistency (cf. \cite{Z77}). Further, we show that this implies the Bhattacharya-Patrangenaru version (cf. \cite{BP03}) of strong consistency which takes here the form (\ref{BP-SC:eq}).
\end{proof}

\begin{Rm}
   Careful inspection of the proof yields that we have only used that the ``distances'' $d_j$ vanish on the diagonal $d_j(p,p)=0$ for all $p\in P_j$;
%    under the hypotheses of Theorem \ref{SC:thm} we have in particular shown that $p_n^j \to p^j$ a.s. in the sense of the topology, for every measurable choice of samples means, $j=0,\ldots,m-1$ if all $P_j$ are compact. To this end the ``distances'' $d_j$ have only played an auxiliary role; 
   they need not be definite, i.e. it is not necessary that $d_j(p,p')=0\Rightarrow p=p'$. 

   Moreover, 
  note that the $d_j$-Heine Borel property holds trivially in case of unique sample descriptors.
\end{Rm} 

\begin{Cor}\label{cor:lambda-lem2}
  Suppose that (\ref{BP-SC:eq}) holds together with Assumption %s \ref{as:clt-mfd-struture} and 
  \ref{as:clt-uniform-derivatives}. Then we have for $l=1,\ldots,m$ the following convergence in probability
  \begin{align*}
    \frac{1}{n}\sum_{k=1}^n \grad_\eta\tau^l(\eta_n,X_k) &\:\stackrel{\Prb}{\to}\: \EE\left[ \grad_\eta\tau^l(\eta',X)\right]\,,&
    \frac{1}{n}\sum_{k=1}^n \Hess_\eta\tau^l(\eta_n,X_k) &\:\stackrel{\Prb}{\to}\: \EE\left[ \Hess_\eta\tau^l(\eta',X)\right]\,.
  \end{align*}
\end{Cor}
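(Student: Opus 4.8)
The plan is to combine the classical strong law of large numbers (SLLN) with the uniform equicontinuity built into Assumption~\ref{as:clt-uniform-derivatives}. I would argue the gradient statement in detail; the Hessian statement follows verbatim on replacing $\grad_\eta\tau^l$ by $\Hess_\eta\tau^l$. Fix $l$ and recall that $\eta_n=\psi(p^{m-1}_n,\ldots,p^{j-1}_n)$ is the chart image of the sample BN means. First I would record that, by strong consistency~(\ref{BP-SC:eq}), $d_{j'}(p^{j'}_n,p^{j'})\to 0$ a.s. for every $j'\in\{j-1,\ldots,m-1\}$, hence $(p^{m-1}_n,\ldots,p^{j-1}_n)\to{f'}^{j-1}$ a.s. in $T_{m-1,m-j}$; since near ${f'}^{j-1}$ this convergence takes place in the smooth-manifold topology underlying Assumption~\ref{as:clt-uniform-derivatives} and $W$ is an open neighborhood of ${f'}^{j-1}$, a.s. the sample family lies in $W$ for $n$ large and $\eta_n\to\eta'$ a.s. by continuity of $\psi$. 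All subsequent estimates are made on this full-probability event, where $\eta_n$ is eventually defined.

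Next I would decompose
\begin{align*}
  \frac1n\sum_{k=1}^n\grad_\eta\tau^l(\eta_n,X_k)-\EE\big[\grad_\eta\tau^l(\eta',X)\big]\;=\;A_n+B_n\,,
\end{align*}
with $A_n=\frac1n\sum_{k=1}^n\big(\grad_\eta\tau^l(\eta_n,X_k)-\grad_\eta\tau^l(\eta',X_k)\big)$ and $B_n=\frac1n\sum_{k=1}^n\grad_\eta\tau^l(\eta',X_k)-\EE[\grad_\eta\tau^l(\eta',X)]$. Since $\cov[\grad_\eta\tau^l(\eta',X)]$ exists by Assumption~\ref{as:clt-uniform-derivatives}, the vector $\grad_\eta\tau^l(\eta',X)$ is integrable, and Kolmogorov's SLLN applied coordinatewise gives $B_n\to 0$ a.s.

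For $A_n$, I would fix $\epsilon>0$ and, using the in-expectation continuity in Assumption~\ref{as:clt-uniform-derivatives}, choose $\delta>0$ with $\EE[g_\delta(X)]<\epsilon$, where $g_\delta(x):=\sup_{\|\eta-\eta'\|<\delta}\|\grad_\eta\tau^l(\eta,x)-\grad_\eta\tau^l(\eta',x)\|$; this $g_\delta$ has finite expectation for $\delta$ small by that same assumption, and is measurable since $\eta\mapsto\grad_\eta\tau^l(\eta,x)$ is continuous and $\RR^{\dim W}$ separable, so the supremum may be taken over a countable dense set. On the event $\{\|\eta_n-\eta'\|<\delta\}$, which holds for all large $n$ a.s., the triangle inequality yields $\|A_n\|\le\frac1n\sum_{k=1}^n g_\delta(X_k)$, and by the SLLN the right-hand side converges to $\EE[g_\delta(X)]<\epsilon$ a.s. Hence $\limsup_n\|A_n+B_n\|\le\epsilon$ a.s.; letting $\epsilon\downarrow 0$ gives $\frac1n\sum_{k=1}^n\grad_\eta\tau^l(\eta_n,X_k)\to\EE[\grad_\eta\tau^l(\eta',X)]$ a.s., in particular in probability. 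The Hessian case is identical.

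The one step that will need genuine care --- the rest being routine --- is transferring the consistency~(\ref{BP-SC:eq}), which is phrased through the loss-type functions $d_j$ on the $P_j$, into the convergence $\eta_n\to\eta'$ inside the chart: one needs that near ${f'}^{j-1}$ the $d_j$-topology (equivalently the product-metric topology on $T_{m-1,m-j}$) coincides with the smooth-manifold topology in which $\psi$ of Assumption~\ref{as:clt-uniform-derivatives} is a chart, so that $d$-convergence both traps the sample family in $W$ eventually and forces its $\psi$-image to $\psi({f'}^{j-1})$. In the worked examples this is precisely the topological compatibility of the $d_j$ verified in Sections~\ref{sec:PNS-Framework} and~\ref{scn:Kendalls-shape-spaces}; in the abstract setting it is naturally to be read into the manifold hypothesis of Assumption~\ref{as:clt-uniform-derivatives}. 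A secondary, purely technical point --- measurability of the suprema $g_\delta$ --- I would dispatch, as above, via continuity of $\tau^l$ in $\eta$ together with separability.
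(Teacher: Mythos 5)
Your proposal is correct and follows essentially the same route as the paper: both rest on the in-expectation equicontinuity of the derivatives from Assumption \ref{as:clt-uniform-derivatives}, the consistency \eqref{BP-SC:eq} to trap $\eta_n$ in a shrinking ball around $\eta'$, and the law of large numbers, the only difference being that you split explicitly into the equicontinuity term $A_n$ and the centered term $B_n$ and conclude a.s. convergence, whereas the paper bounds everything by one supremum and applies Markov's inequality to get convergence in probability directly. Your variant is in fact slightly more careful, since it avoids the paper's appeal to a deterministic sequence $\delta_n\to 0$ with $\|\eta_n-\eta'\|<\delta_n$ a.s. (which need not exist for merely a.s. convergent $\eta_n$) by instead working, for each fixed $\delta$, on the full-measure event where $\|\eta_n-\eta'\|<\delta$ eventually.
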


\begin{proof}
  Let $\epsilon >0$. Then by Assumption \ref{as:clt-uniform-derivatives}, Chebyshev's inequality and (\ref{BP-SC:eq}), there is a sequence $\delta_n \to 0$ such that $\|\eta_n - \eta'\|<\delta_n\as$ and
  \begin{align*}
    \MoveEqLeft
    \Prb\left\{\left\| \frac{1}{n}\sum_{k=1}^n\grad_\eta\tau^l(\eta_n,X_k) - \EE\big[\grad_\eta\tau^l(\eta',X)\big]\right\| \geq \epsilon \right\}
    \\
    &\leq \Prb\left\{\sup_{\|\eta-\eta'\|<\delta_n}\left\| \frac{1}{n}\sum_{k=1}^n\grad_\eta\tau^l(\eta,X_k) - \EE\big[\grad_\eta\tau^l(\eta',X)\big]\right\| \geq \epsilon \right\}\\
    &\leq \frac{1}{\epsilon}\,\EE\left[\sup_{\|\eta-\eta'\|<\delta_n}\left\| \frac{1}{n}\sum_{k=1}^n\grad_\eta\tau^l(\eta,X_k) - \EE\big[\grad_\eta\tau^l(\eta',X)\big]\right\| \right]~\to~0
  \end{align*}
  as $n\to \infty$. yielding the first assertion. The second assertion follows similarly.
\end{proof}

We now introduce notation we use for the central limit theorem. Let $j \in \{1,\ldots,m\}$. By construction, every measurable selection $f_n^{j-1}$ of BN sample means minimizes
\begin{align*}
  \frac{1}{n} \sum_{k=1}^n \rho_{p^j_n}(\pi_{f^j_n}\circ X_k,p^{j-1}_n)^2
\end{align*}
under the constraints that it minimizes each of
\begin{align*}
  \frac{1}{n} \sum_{k=1}^n \rho_{Q}(X_k,p^{m-1}_n)^2, \quad \ldots \quad , \frac{1}{n} \sum_{k=1}^n \rho_{p^{j+1}_n}(\pi_{f^{j+1}_n}\circ X_k,p^{j}_n)^2\,.
\end{align*}
Similarly, the BN population mean ${f'}^{j-1}$ minimizes
\begin{align*}
  \EE\left[ \rho_{p^j}(\pi_{f^j}\circ X,p^{j-1})^2\right]
\end{align*}
under the constraints that it minimizes each of
\begin{align*}
  \EE\left[ \rho_{Q}(X,p^{m-1})^2\right], \quad\ldots\quad , \EE\left[ \rho_{p^{j+1}}(\pi_{f^{j+1}}\circ X,p^{j})^2\right]\,.
\end{align*}
In consequence, due to differentiability guaranteed by Assumption \ref{as:clt-uniform-derivatives}, with the notation of $\tau^j$ there, suitable random Lagrange multipliers $\lambda^{j+1}_n,\ldots,\lambda^m_n \in \RR$ and deterministic Lagrange multipliers $\lambda^{j+1},\ldots,\lambda^m \in \RR$ exist such that for $\eta_n = \psi^{-1}(f_n^{j-1})$ and $\eta' = \psi^{-1}({f'}^{j-1})$ the following hold
\begin{alignat}{7}
  &&\grad_{\eta}G_n(\eta_n) &=0&\mbox{ with }&&G_n(\eta) &:= \frac{1}{n} \sum_{k=1}^n \tau^j(\eta,X_k) &&+ \sum_{l=j+1}^m \lambda_n^l\,\frac{1}{n} \sum_{k=1}^n \tau^l(\eta,X_k)  \label{eq:sample-Lagrange}\\
  &&\grad_{\eta}G(\eta') &=0 &\mbox{ with }&&G(\eta) &:= \EE\big[\tau^j(\eta,X)\big] &&+ \sum_{l=j+1}^m \lambda^l \,\EE\big[\tau^l(\eta,X)\big]   \label{eq:population-Lagrange}
\end{alignat}

\begin{Cor}\label{lem:lambda-lem1}
  Suppose that (\ref{BP-SC:eq}) holds together with Assumption %s \ref{as:clt-mfd-struture} and 
  \ref{as:clt-uniform-derivatives}. Then the random Lagrange multipliers $\lambda^{j+1}_n,\ldots,\lambda^m_n$ in (\ref{eq:sample-Lagrange}) and $\lambda^{j+1},\ldots,\lambda^m$ in (\ref{eq:population-Lagrange}) satisfy
  \begin{align*}
    \lambda^{l}_n \stackrel{\Prb}{\to} \lambda^{l} \mbox{ for }l=j+1,\ldots,m\,.
  \end{align*}
\end{Cor}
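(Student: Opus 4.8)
The plan is to exhibit both the random and the deterministic Lagrange multipliers as explicit solutions of linear systems whose coefficients converge, and then to conclude by the continuous mapping theorem. The first ingredient is that strong consistency (\ref{BP-SC:eq}), together with continuity of the chart $\psi$ near ${f'}^{j-1}$, forces $\eta_n=\psi^{-1}(f_n^{j-1})\to\eta'=\psi^{-1}({f'}^{j-1})$ a.s.; this is exactly the hypothesis needed to invoke Corollary \ref{cor:lambda-lem2}, which then yields, for every $l=j,j+1,\ldots,m$,
$$
g_n^l:=\frac1n\sum_{k=1}^n\grad_\eta\tau^l(\eta_n,X_k)\ \stackrel{\Prb}{\to}\ g^l:=\EE\big[\grad_\eta\tau^l(\eta',X)\big]\,.
$$

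Next I would recast the stationarity equations (\ref{eq:sample-Lagrange}) and (\ref{eq:population-Lagrange}) in matrix form. Collect the constraint gradients into the $\dim(W)\times(m-j)$ matrices $\Gamma_n=(g_n^{j+1},\ldots,g_n^m)$ and $\Gamma=(g^{j+1},\ldots,g^m)$, and write $\lambda_n=(\lambda_n^{j+1},\ldots,\lambda_n^m)^T$, $\lambda=(\lambda^{j+1},\ldots,\lambda^m)^T$. Then the two equations read $g_n^j+\Gamma_n\lambda_n=0$ and $g^j+\Gamma\lambda=0$. By the final hypothesis of Assumption \ref{as:clt-uniform-derivatives} the columns of $\Gamma$ are linearly independent, so $\Gamma^T\Gamma$ is invertible and the population multipliers are uniquely $\lambda=-(\Gamma^T\Gamma)^{-1}\Gamma^Tg^j$. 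Since $\Gamma_n\stackrel{\Prb}{\to}\Gamma$ we get $\det(\Gamma_n^T\Gamma_n)\stackrel{\Prb}{\to}\det(\Gamma^T\Gamma)\neq0$, hence the event that $\Gamma_n$ has full column rank has probability tending to $1$; on that event the random multipliers $\lambda_n$ (whose existence was granted in the discussion preceding the corollary) are necessarily the unique solution $\lambda_n=-(\Gamma_n^T\Gamma_n)^{-1}\Gamma_n^Tg_n^j$. Applying the continuous mapping theorem to $(\Gamma_n,g_n^j)$ then gives $\lambda_n\stackrel{\Prb}{\to}-(\Gamma^T\Gamma)^{-1}\Gamma^Tg^j=\lambda$, i.e. $\lambda_n^l\stackrel{\Prb}{\to}\lambda^l$ for each $l=j+1,\ldots,m$.

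The delicate point is precisely this pinning-down step: one has to be sure that eventual full column rank of $\Gamma_n$ really holds, since that is what upgrades ``some multipliers exist'' to ``the multipliers are the least-squares solution'' and lets the argument close without a separate tightness estimate. If one wished to avoid the explicit pseudo-inverse, one could instead argue along subsequences: every subsequence of $(\lambda_n)$ has a further subsequence along which $(\Gamma_{n_k},g_{n_k}^j)$ converges a.s., and any accumulation point $\tilde\lambda$ of $(\lambda_{n_k})$ would satisfy $g^j+\Gamma\tilde\lambda=0$, hence $\tilde\lambda=\lambda$ by linear independence; that variant, however, additionally requires a priori boundedness of $(\lambda_n)$, which the direct matrix argument sidesteps.
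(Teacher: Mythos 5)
Your argument is correct and is essentially the paper's own proof: you express both sets of multipliers as solutions of the linear stationarity system, use Corollary~\ref{cor:lambda-lem2} together with the linear-independence hypothesis of Assumption~\ref{as:clt-uniform-derivatives} to show the coefficient matrices converge in probability to a full-column-rank limit, restrict to the event of full column rank (which has probability tending to one), and conclude by continuity. The only cosmetic difference is that the paper cites \cite{Stewart1969} for continuity of the Moore--Penrose pseudoinverse at the final step, whereas you substitute the explicit normal-equations formula $-(\Gamma^T\Gamma)^{-1}\Gamma^Tg^j$ and invoke the continuous mapping theorem, which is an equivalent and slightly more self-contained way to close the same argument.
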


\begin{proof}
  By hypothesis, the vector
  \begin{align*}
    a_n:= \frac{1}{n} \sum_{k=1}^n \grad_{\eta}\,\tau^j(\eta_n,X_k)
  \end{align*}
  is a linear combination of the vectors
  \begin{align*}
    b_n^l :=\frac{1}{n} \sum_{k=1}^n \grad_\eta \,\tau^l(\eta_n,X_k),~~l=j+1,\ldots,m
  \end{align*}
  conveyed by $\lambda^{j+1}_n,\ldots,\lambda^m_n$. Similarly, $a:=\EE[ \grad_{\eta}\,\tau^j(\eta',X)]$ is a linear combination of the vectors $b^l:=\EE[ \grad_{\eta}\,\tau^l(\eta',X)] $, $l=j+1,\ldots,m$ conveyed by $\lambda^{j+1},\ldots,\lambda^m$. Set $b = (b^{j+1},\ldots,b^m)$, $b_n = (b_n^{j+1},\ldots,b_n^m)$ and $\lambda = (\lambda^{j+1},\ldots,\lambda^m)^T$, $\lambda_n = (\lambda_n^{j+1},\ldots,\lambda_n^m)^T$.
  
  By Assumption \ref{as:clt-uniform-derivatives} we have $\rank(b) = m-j$. We set $\Omega_n = \{\rank(b_n) = m-j\}$. Since the determinant is continuous, by Corollary \ref{cor:lambda-lem2}, 
  \begin{align*}
    \Prb(\Omega_n) \to 1\mbox{ as }n\to \infty\,.
  \end{align*}
  Now consider the function $g(b,a) = b^{+} a$ where $b^+$ denotes the Moore-Penrose pseudoinverse of $b$. Then $\lambda = b^{+}a$ and $\lambda_n = b_n^+ a_n$.
  In consequence, for arbitrary $\epsilon >0$ and $n\to \infty$ we have that
  \begin{align*}
    \Prb\{\|\lambda_n - \lambda\| > \epsilon\}\leq \Prb\Big\{\|g(b_n,a_n) -g(b,a)\| > \epsilon, \omega\in \Omega_n\Big\} + \Prb(\Omega \setminus \Omega_n)\to 0
  \end{align*}
  because, for any $\delta >0$ the first term is smaller than
  \begin{align*}
    \Prb\Big\{\|g(b_n,a_n) -g(b,a)\| > \epsilon, \omega\in \Omega_n, \|(b_n,a_n) - (b,a)\| \leq \delta\Big\} + \Prb\Big\{\|(b_n,a_n) - (b,a)\| >\delta\Big\}\,.
  \end{align*}
  Here, due to continuity established by \cite{Stewart1969}, the first term vanishes for $\delta$ sufficiently small, and, due to Corollary \ref{cor:lambda-lem2}, for any fixed $\delta >0$, the second term tends to zero. This yields the assertion.
%   
%   Now, consider the case that $\rank(b) =r < m-j$. Then w.l.o.g. suppose that $b^{j+1},\ldots,b^{j+r}$ generate the column space of $b$ so that we can assume $0=\lambda^{j+r+1}=\ldots = \lambda^{m}$. With the same argument as above we have then that $\Prb(\Omega_n)\to 1$ with $\Omega_n = \{ b^{j+1}_n,\ldots,b^{j+r}_n$ generate the column space of $b_n\}$ such that $0=\lambda^{j+r+1}_n=\ldots = \lambda^{m}_n$ can be chosen on $\Omega_n$. Then a minor modification of the above argument yields the assertion of the lemma also in this case.
\end{proof}

\begin{Th}\label{CLT:thm}
  Let $j\in \{1,\ldots,m-1\}$ and consider random data $X_1,\ldots,X_n \stackrel{iid}{\sim }X$ on a data space $Q$ admitting BNFDs from $P_{m-1}$ to $P_{j-1}$, a unique BN population mean ${f'}^{j-1} = \{{p'}^{m-1},\ldots,{p'}^{j-1}\}$ and BN sample means $\{E_n^{f_n^{m-1}},\ldots,E_n^{f_n^{j-1}}\}$ due to a measurable selection $p^{l}_n \in E^{f_n^{l}}_n$, $f^{j-1}_n = \{p_n^{m-1},\ldots,p_n^{j-1}\}$, $l=j-1,\ldots,m-1$.
  
  \begin{itemize}
    \item[(i)]
      Assuming that Assumption %s \ref{as:clt-mfd-struture} and 
      \ref{as:clt-uniform-derivatives} hold as well as (\ref{BP-SC:eq}) for all $j\in \{j-1,\ldots,m-1\}$, we have that
      \begin{align*} %\label{CLT:eq}
        \sqrt{n}H_\psi \big(\psi^{-1}(f^{j-1}_n) - \psi^{-1}({f'}^{j-1})\big)\to{\cal N}(0,B_{\psi})
      \end{align*}
      with a chart $\psi$ as specified in Assumption  \ref{as:clt-uniform-derivatives} %\ref{as:clt-mfd-struture} 
      as well as
      \begin{align*}
        H_{\psi} &\:=\: \EE\left[\Hess_{\eta}\tau^j(\eta',X)+ \sum_{l=j+1}^m \lambda^l\, \Hess_{\eta}\tau^l(\eta',X) \right] \mbox{ and }\\
        B_{\psi} &\:=\: \cov\left[ \grad_{\eta}\tau^j(\eta',X) + \sum_{l=j+1}^m \lambda^l\, \grad_{\eta}\tau^l(\eta',X)\right] \,,
      \end{align*}
      with the notation from Assumption \ref{as:clt-uniform-derivatives} where $\lambda_{j+1},\ldots\lambda_m\in \RR$ are suitable such that
      \begin{align*}
        \grad_{\eta}\, \EE\big[\tau^j(\eta,X)\big] + \sum_{l=j+1}^m \lambda^l\,\grad_\eta \,\EE\big[\tau^l(\eta,X)\big]\,
      \end{align*}
      vanishes at $\eta=\eta'$.
    \item[(ii)]
      If additionally $H_{\psi}>0$, then 
      $f^{j-1}_n$ satisfies a Gaussian $\sqrt{n}$-CLT 
      \begin{align*} %\label{CLT:eq}
        \sqrt{n} \big(\psi^{-1}(f^{j-1}_n) - \psi^{-1}({f'}^{j-1})\big)\to{\cal N}(0, \Sigma_{\psi}),\quad
        \Sigma_{\psi} = H_\psi^{-1} B_\psi H_\psi^{-1} \,.
      \end{align*}
      \item[(iii)] If additionally the chart $\psi$ factors as in Definition \ref{def:factoring}, then also $p^{j-1}_n$ satisfies a Gaussian $\sqrt{n}$-CLT 
      \begin{align*}
        \sqrt{n} \big(\phi^{-1}(p^{j-1}_n) - \phi^{-1}({p'}^{j-1})\big)\to{\cal N}(0, \Sigma_{\phi}),\quad \Sigma_{\phi} = \big({\Sigma_{\psi}}_{ik}\big)_{i,k=1}^{\dim(P_{j-1})} \,
      \end{align*}
      with the notation of Definition \ref{def:factoring}. 
  \end{itemize}

\end{Th}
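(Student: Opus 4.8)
The plan is to read Theorem \ref{CLT:thm} as a constrained M-estimation (Z-estimation) problem and to run the classical sandwich argument on the Lagrange stationarity identities (\ref{eq:sample-Lagrange}) and (\ref{eq:population-Lagrange}); strong consistency (\ref{BP-SC:eq}) serves only to localise the estimator, while Corollaries \ref{cor:lambda-lem2} and \ref{lem:lambda-lem1} furnish the passage of the sample averages and of the random Lagrange multipliers to their deterministic limits. Heuristically, the upshot will be that the constrained sample BNFD $\eta_n=\psi^{-1}(f^{j-1}_n)$ behaves asymptotically like the \emph{unconstrained} M-estimator minimising $\frac1n\sum_k\big(\tau^j(\eta,X_k)+\sum_{l=j+1}^m\lambda^l\tau^l(\eta,X_k)\big)$ with the \emph{deterministic} multipliers $\lambda^l$ of (\ref{eq:population-Lagrange}), for which the sandwich produces exactly $H_\psi$ and $B_\psi$. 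First I would observe that (\ref{BP-SC:eq}), applied at the levels $j-1,\ldots,m-1$, forces $f^l_n\to {f'}^l$ a.s.\ for every $l$, hence $\eta_n\to\eta'=\psi^{-1}({f'}^{j-1})$ a.s.\ by continuity of the chart; so eventually $\eta_n$ lies in the open set $W$ where all $\tau^l(\cdot,X)$ are twice differentiable and Assumption \ref{as:clt-uniform-derivatives} is in force.

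\textbf{Taylor expansion.} Writing $c_n^j=1,\ c^j=1$ and $c_n^l=\lambda_n^l,\ c^l=\lambda^l$ for $l>j$, I would expand each $\grad_\eta\tau^l(\eta_n,X_k)$ about $\eta'$ with integral remainder, so that (\ref{eq:sample-Lagrange}) reads
\begin{align*}
  0 \;=\; \sum_{l=j}^m c_n^l\,\frac1n\sum_{k=1}^n\grad_\eta\tau^l(\eta',X_k)\;+\;M_n\,(\eta_n-\eta'),\qquad M_n:=\sum_{l=j}^m c_n^l\,\frac1n\sum_{k=1}^n\int_0^1\Hess_\eta\tau^l\big(\eta'+t(\eta_n-\eta'),X_k\big)\,dt.
\end{align*}
Using $\eta_n\to\eta'$, the in-expectation continuity of the Hessians (Assumption \ref{as:clt-uniform-derivatives}), a uniform law of large numbers exactly as in the proof of Corollary \ref{cor:lambda-lem2} (now with the random argument on the segment $[\eta',\eta_n]$), and $c_n^l\stackrel{\Prb}{\to}c^l$ from Corollary \ref{lem:lambda-lem1}, one gets $M_n\stackrel{\Prb}{\to}H_\psi$. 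Simultaneously the multivariate central limit theorem gives that $\sqrt n\big(\frac1n\sum_k\grad_\eta\tau^l(\eta',X_k)-\EE[\grad_\eta\tau^l(\eta',X)]\big)_{l=j}^m$ converges jointly to a centred Gaussian with finite covariances (Assumption \ref{as:clt-uniform-derivatives}).

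\textbf{The random multipliers --- the main obstacle.} Multiplying the displayed identity by $\sqrt n$ and subtracting the population identity (\ref{eq:population-Lagrange}), i.e.\ $\sum_{l=j}^m c^l\,\EE[\grad_\eta\tau^l(\eta',X)]=0$, I would rewrite
\begin{align*}
  -M_n\sqrt n(\eta_n-\eta') \;=\; \sum_{l=j}^m c_n^l\,\sqrt n\Big(\tfrac1n\sum_k\grad_\eta\tau^l(\eta',X_k)-\EE[\grad_\eta\tau^l(\eta',X)]\Big)\;+\;\sqrt n\!\!\sum_{l=j+1}^m(\lambda_n^l-\lambda^l)\,\EE[\grad_\eta\tau^l(\eta',X)].
\end{align*}
By the joint CLT, Slutsky, and $c_n^l\to c^l$, the first sum tends to ${\cal N}(0,B_\psi)$. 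The delicate point --- and the main obstacle of the whole proof --- is the second, Lagrange-multiplier term: one must show it is negligible in the relevant sense. This is where the explicit representation $\lambda_n=\pm b_n^+a_n$, $\lambda=\pm b^+a$ from the proof of Corollary \ref{lem:lambda-lem1}, the linear independence of $\EE[\grad_\eta\tau^{j+1}(\eta',X)],\ldots,\EE[\grad_\eta\tau^m(\eta',X)]$, and the higher-level stationarity conditions must be combined carefully; once this is done, $\sqrt n\,H_\psi(\eta_n-\eta')=M_n\sqrt n(\eta_n-\eta')+o_\Prb(1)\to{\cal N}(0,B_\psi)$, proving (i) (stochastic boundedness of $\sqrt n(\eta_n-\eta')$ transverse to $\ker H_\psi$ coming from the same higher-level equations).

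\textbf{Parts (ii) and (iii).} If moreover $H_\psi>0$, then $M_n$ is eventually invertible with $M_n^{-1}\stackrel{\Prb}{\to}H_\psi^{-1}$; solving the identity for $\sqrt n(\eta_n-\eta')$ and invoking Slutsky gives $\sqrt n(\eta_n-\eta')\to H_\psi^{-1}{\cal N}(0,B_\psi)={\cal N}(0,\Sigma_\psi)$ with $\Sigma_\psi=H_\psi^{-1}B_\psi H_\psi^{-1}$, which is (ii). For (iii), Definition \ref{def:factoring} says that the first $\dim(P_{j-1})$ coordinates of $\psi^{-1}(f)$ coincide with $\phi^{-1}(p^{j-1})$; hence $\phi^{-1}(p^{j-1}_n)-\phi^{-1}({p'}^{j-1})$ is the coordinate projection of $\eta_n-\eta'$ onto the first $\dim(P_{j-1})$ components, and the continuous mapping theorem applied to this linear projection of the Gaussian limit from (ii) yields the CLT for $p^{j-1}_n$ with covariance the leading $\dim(P_{j-1})\times\dim(P_{j-1})$ principal submatrix $\Sigma_\phi$ of $\Sigma_\psi$.
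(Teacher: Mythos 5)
Your plan reproduces the paper's proof almost verbatim: write the sample Lagrange stationarity $\grad_\eta G_n(\eta_n)=0$, Taylor-expand about $\eta'$, send the Hessian factor to $H_\psi$ via Corollaries~\ref{cor:lambda-lem2} and \ref{lem:lambda-lem1} plus the in-expectation continuity in Assumption~\ref{as:clt-uniform-derivatives}, feed the gradient term into the CLT, and finish (ii), (iii) by Slutsky and coordinate projection respectively. The integral-form remainder you use in place of the paper's mean-value form with a random intermediate point $\widetilde\eta_n$ is a cosmetic difference; parts (ii) and (iii) are correct and essentially identical to the paper's.

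The genuine gap is exactly where you flag it and then stop. After subtracting the population stationarity identity, you correctly isolate the extra term
$\sqrt n\sum_{l>j}(\lambda_n^l-\lambda^l)\,\EE[\grad_\eta\tau^l(\eta',X)]$, but you never establish that it is negligible (or absorbed into $\cal G$); you merely announce that the explicit formula $\lambda_n=b_n^+a_n$, the linear independence of the $\EE[\grad_\eta\tau^l(\eta',X)]$, and the higher-level stationarity conditions ``must be combined carefully'' and proceed as if this were done. Corollary~\ref{lem:lambda-lem1} only gives $\lambda_n^l-\lambda^l=o_p(1)$, so the term in question is $o_p(1)\cdot O_p(\sqrt n)$, which need not vanish; you would need $\lambda_n^l-\lambda^l=o_p(n^{-1/2})$, or else a cancellation against the Hessian-times-increment term, neither of which you prove. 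In fairness, the paper's own proof handles this point in exactly the same nonchalant way: it substitutes $(\lambda^l+o_p(1))\grad_\eta\tau^l(\eta',X_k)$ inside the $n^{-1/2}\sum_k$ and asserts convergence to $\cal G$, without explaining why the $o_p(1)$ factor survives multiplication by the diverging $n^{-1/2}\sum_k\grad_\eta\tau^l(\eta',X_k)$ whose mean $\EE[\grad_\eta\tau^l(\eta',X)]$ is assumed nonzero. So your diagnosis of the ``main obstacle'' is accurate and in fact sharper than the paper's own exposition, but a proof that leaves the obstacle standing is not a proof; you still owe a rate (or a cancellation argument) for the random multipliers before part (i) is established.
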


\begin{proof}
  By Taylor expansion we have for $G_n$ defined in (\ref{eq:sample-Lagrange}), 
  \begin{alignat}{3}
    0&= \sqrt{n}\,\grad_{\eta}G_n(\eta_n) \vphantom{\smash[t]{\Big)}}\nonumber\\
    &= \sqrt{n}\,\grad_{\eta}G_n(\eta') ~&&+~ \Hess_{\eta}G_n(\eta') \sqrt{n}\, (\eta_n - \eta') \label{eq:clt-proof-1a}\\
    &&&+~ \Big(\Hess_{\eta}G_n(\widetilde{\eta}_n) - \Hess_{\eta}G_n(\eta') \Big) \sqrt{n}\, (\eta_n - \eta') \nonumber
  \end{alignat}
  with some random $\widetilde{\eta}_n$ between $\eta_n$ and $\eta'$. In consequence of Corollary \ref{lem:lambda-lem1} we have with the usual CLT for the first term in (\ref{eq:clt-proof-1a}) that 
  \begin{align*}
    \sqrt{n}\, \grad_{\eta}G_n(\eta') &\:=\: \frac{1}{\sqrt{n}}\sum_{k=1}^n \left( \grad_{\eta}\tau^j(\eta',X_k) + \sum_{l=j+1}^m \big(\lambda^l+o_p(1)\big)\, \grad_{\eta}\tau^l(\eta',X_k) \right) \to~ \cG
  \end{align*}
  with a zero-mean Gaussian vector $\cG$ of covariance
  \begin{align*}
    \cov\left[ \grad_{\eta}\tau^j(\eta',X) + \sum_{l=j+1}^m \lambda^l\, \grad_{\eta}\tau^l(\eta',X)\right]\,.
  \end{align*}
  Similarly, we have for the first factor in the second term in (\ref{eq:clt-proof-1a}),
  \begin{align*}
    \Hess_{\eta}G_n(\eta') & \:=\: \frac{1}{n} \sum_{k=1}^n \left( \Hess_{\eta}\tau^j(\eta',X_k) + \sum_{l=j+1}^m \lambda^l_n\, \Hess_{\eta}\tau^l(\eta',X_k) \right)\\ 
    &\:\stackrel{\Prb}{\to}\: \EE\left[\Hess_{\eta}\tau^j(\eta',X)+ \sum_{l=j+1}^m \lambda^l\, \Hess_{\eta}\tau^l(\eta',X) \right]\,.
  \end{align*}
  Finally, for the first factor in the last the term in (\ref{eq:clt-proof-1a}), invoking also Corollary \ref{cor:lambda-lem2}, we obtain that
  \begin{align*}
    \left\| \Hess_{\eta}G_n(\widetilde{\eta}_n) - \Hess_{\eta}G_n(\eta') \right\| \stackrel{\Prb}{\to} 0\,.
  \end{align*}
  This yields Assertion (i). If $H_\psi$ is invertible, as asserted in (ii), joint normality follows at once for $\sqrt{n}(\eta_n -\eta')$. 
  
  (iii): In case of factoring charts we can rewrite
  \begin{align*}
    \rho_{p^{j}}(\pi_{f^j}\circ X, p^{j-1})^2
    &= \rho_{\pi^{P_{j}} \circ \psi_2^{-1}(\xi)} \Big(\pi_{\psi_2^{-1}(\xi)}\circ X, \psi_1^{-1}(\theta)\Big)^2=:\tau^j(\theta,\xi,X)
  \end{align*}
  where $\psi(f^{j-1}) = \psi(f^j,p^{j-1}) = \big(\psi_1(f^{j-1}),\psi_2(f^{j})\big) = \eta$ with $\psi_1(f^{j-1}) = \theta = \phi(p^{j-1})$ and $\psi_2(f^{j}) = \xi$, and $\pi^{P_j} : T_{m-1,m-j+1}\to P_j$ is defined by $(p^{m-1},\ldots,p^{j}) \mapsto p^j$. With $(\theta_n,\xi_n) = \eta_n$ in Assertion (ii) we obtain thus
  \begin{align*}
    \sqrt{n} H_\psi \left(\begin{pmatrix} \theta_n\\ \xi_n\end{pmatrix}-\eta'\right) \to \cN(0,B_\psi)\,.
  \end{align*}
  Since under projection to the first coordinates $\theta = \phi(p^{j-1})$, asymptotic normality is preserved, Assertion (iii) follows at once. 
\end{proof}

\subsection{A Nested Two-Sample Bootstrap Test}
Suppose that we have two independent i.i.d. samples $X_1,\ldots,X_n\sim X \in Q$, $Y_1,\ldots,Y_m\sim Y \in Q$ in a data space $Q$ admitting BNFDs and we want to test 
\begin{align*}
  H_0: X\sim Y \quad\mbox{versus}\quad H_1: X\not \sim Y
\end{align*} 
using descriptors in $p \in P$. Here, $p \in P$ stands either for a single $p_j\in P_j$ for which we have established factoring charts, or for a suitable sequence $f \in T_{j,k}$. We assume that the first sample gives rise to $\hat p^X_n\in P$, the second to $\hat p^Y_m\in P$, and that these are unique. Under the corresponding assumptions of Theorem \ref{CLT:thm}, 
define a statistic
\begin{align*}
  T^2(A) = \big(\phi(\hat p_n^X )- \phi(\hat p_m^Y )\big)^T A \big(\phi(\hat p_n^X )- \phi(\hat p_m^Y )\big)\,.
\end{align*}
Under $H_0$, up to a suitable factor, this is Hotelling $T^2$ distributed if $A^{-1}$ is the corresponding empirical covariance matrix. Therefore, for $A^{-1}$ we use the empirical covariance matrix from bootstrap samples.

With this fixed $A$, we simulate that statistic under $H_0$ by again bootstrapping $B$ times. Namely from $X_1,\ldots,X_n,Y_1\ldots,Y_m$ we sample $Z_{1,b},\ldots,Z_{n+m,b}$ and compute the corresponding
$ {T^*}^2(A)_b$ ($b = 1,\ldots,B$) from $X^*_{i,b} = Z_{i,b}, Y^*_{j,b} = Z_{n+j,b}$ ($i=1,\ldots,n$, $j=1,\ldots,m$). From these, for a given level $\alpha \in (0,1)$ we compute the empirical quantile $c^*_{1-\alpha}$ such that
\begin{align*}
  \Prb\big\{ {T^*}^2(A) \leq c^*_{1-\alpha}|X_1,\ldots,X_n,Y_1,\ldots,Y_m\big\} = 1-\alpha\,.
\end{align*}
Arguing as in \cite[Corollary 2.3 and Remark 2.6]{BP05} which extends at once to our setup, we assume that the corresponding population covariance matrix $\Sigma_\psi$ or $\Sigma_\phi$, respectively, from Theorem \ref{CLT:thm} is invertible. We have then under $H_0$ that $c^*_{1-\alpha}$ gives an asymptotic coverage of $1-\alpha$ for $T^2(A)$, i.~e. $ \Prb\{T^2(A) \leq c^*_{1-\alpha}\}\to 1-\alpha$ as $n,m \to \infty$ if $n/m \to c$ with a fixed $c \in (0,\infty)$.

\section{Applications}\label{application:scn}

\subsection{Simulations}

To illustrate our CLT for principle nested spheres (PNS) and principle nested great spheres (PNGS), we simulate three data sets, each from two paired random variables $X$ and $Y$, displayed in Figure \ref{simulated_data}.
% \begin{itemize}
%  \item[] Data on an $S^3$ concentrate on the same proper small $S^2$ and there on segments of orthogonal great circles such that their nested means 
% \begin{enumerate}[I)]
%   \item   are antipodal,
%   \item   coincide,
%   \end{enumerate}
%   \item[] Data on an $S^2$ concentrate about segments of different small circles
% \begin{enumerate}[I)]
%  \item[III)]  have different nested means under PNS, but coinciding intrinsic means and coinciding principal geodesics as well as coinciding nested means under PNGS.
% \end{enumerate}
% \end{itemize}
\begin{enumerate}[I)]
  \item Data on an $S^3$ concentrate on the same proper small $S^2$ and there on segments of orthogonal great circles such that their nested means are antipodal.
  \item Data on an $S^3$ concentrate on the same proper small $S^2$ and there on segments of orthogonal great circles such that their nested means coincide.
  \item Data on an $S^2$ concentrate on segments of different small circles, have different nested means under PNS, but, under PNGS,  %coinciding intrinsic means and 
  coinciding principal geodesics and nested means.
\end{enumerate}

\begin{figure}[ht!]
  \centering
  \subcaptionbox{Data set I}[0.3\textwidth]{\includegraphics[width=0.3\textwidth, clip=true, trim=3.5cm 1cm 3cm 1cm]{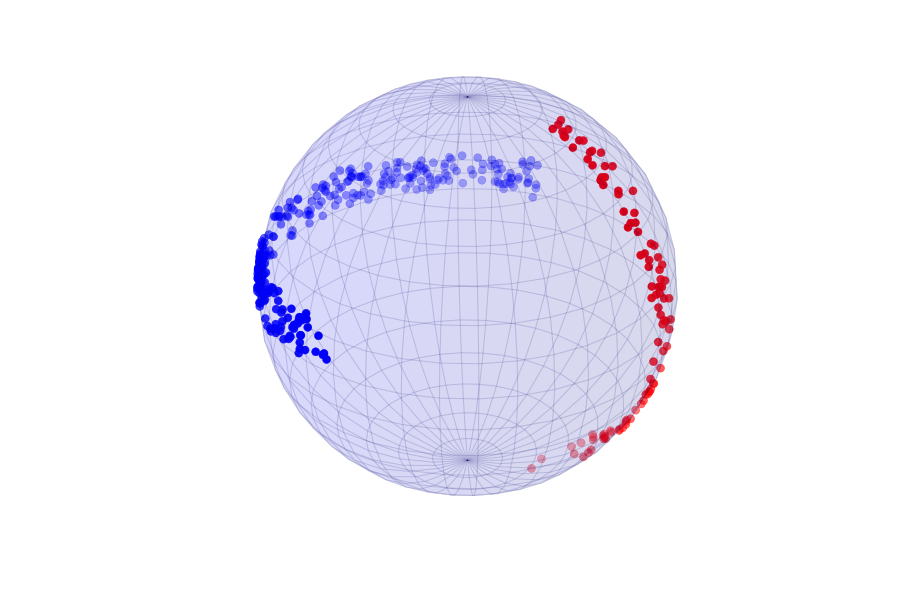}}
  \subcaptionbox{Data set II}[0.3\textwidth]{\includegraphics[width=0.3\textwidth, clip=true, trim=3.5cm 1cm 3cm 1cm]{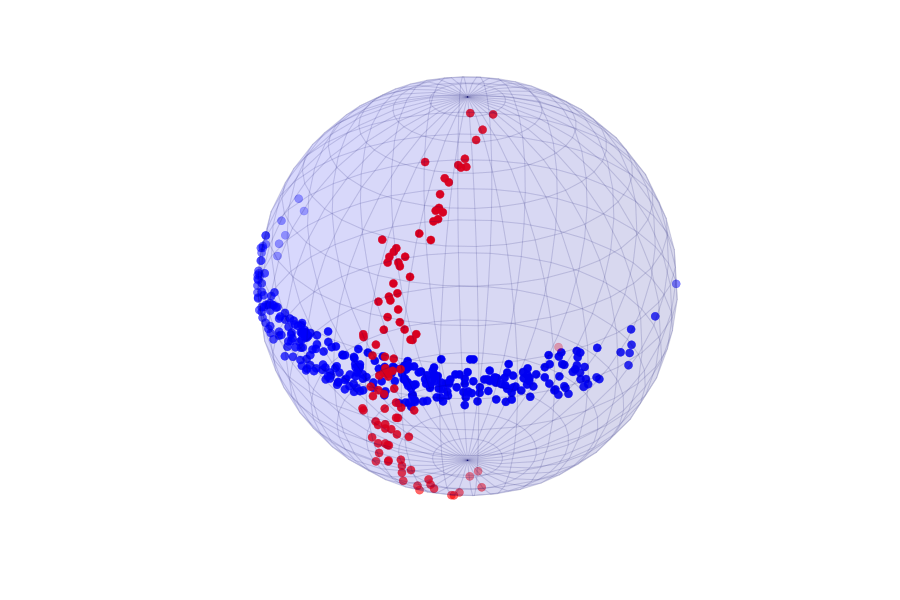}}
  \subcaptionbox{Data set III}[0.3\textwidth]{\includegraphics[width=0.3\textwidth, clip=true, trim=3.5cm 1cm 3cm 1cm]{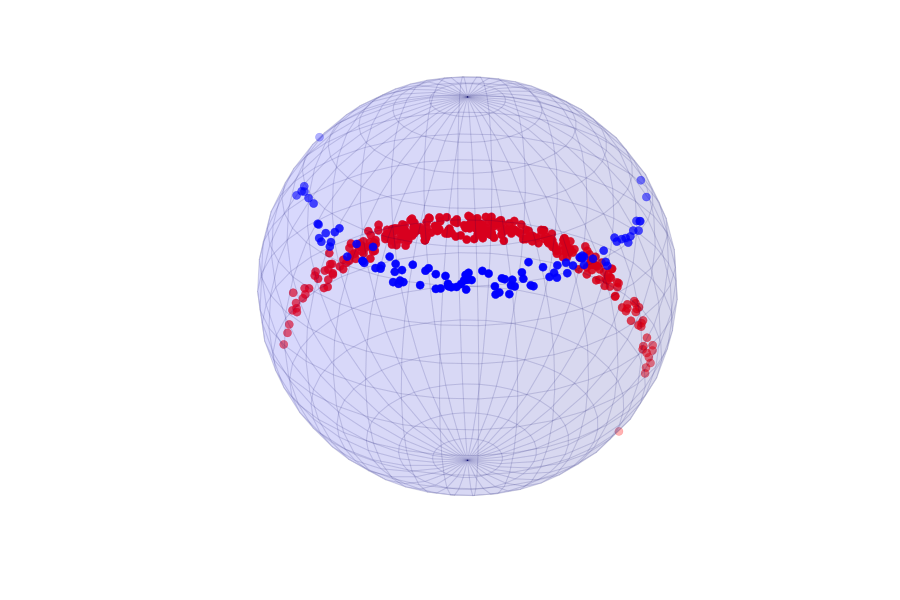}}
  \caption{\it Simulated datasets I (left) and II (middle) on $S^3$ concentrate on a common proper small $S^2$, their projections to estimated small two-spheres is depicted. The simulated dataset III (right) is on $S^2$.}
  \label{simulated_data}
\end{figure}

We apply PNS and PNGS to the simulated data and perform the two-sample test for identical respective nested submanifolds (means, small and great circles) and for identical small and great two-spheres. The resulting p-values are displayed in Table \ref{table}. These values are in agreement with the intuition guiding the design of the data.

\begin{table}[!ht]
  \caption{\textit{Displaying p-values for PNS and PNGS from the two-sample test on identical nested mean (column 0d), on identical nested small and great circle, respectively, (column 1d) and on identical small and great two-sphere (column 2d). Using $B=1000$ bootstrap samples, the penultimate p-value is $10^{-3}$.}}
  \begin{center}
    \begin{tabular}{|l|l|r|r|r|}
    \hline
    \rule{0pt}{2.6ex} Data Set & Method & $0$d       & $1$d       & $2$d       \\ \hline
    \rule{0pt}{2.6ex} I        & PNS    & $<10^{-3}$ & $<10^{-3}$ & $0.50$     \\
    \rule{0pt}{2.4ex}          & PNGS   & $<10^{-3}$ & $<10^{-3}$ & $<10^{-3}$ \\ \hline
    \rule{0pt}{2.6ex} II       & PNS    & $0.50$     & $<10^{-3}$ & $0.20$     \\
    \rule{0pt}{2.4ex}          & PNGS   & $0.47$     & $<10^{-3}$ & $<10^{-3}$ \\ \hline
    \rule{0pt}{2.6ex} III      & PNS    & $<10^{-3}$ & $<10^{-3}$ &            \\
    \rule{0pt}{2.4ex}          & PNGS   & $0.07$     & $0.15$     &            \\ \hline
    \end{tabular}
  \end{center}
  \label{table}
\end{table}

\subsection{Early Human Mesenchymal Stem Cell Differentiation}

Understanding differentiation of adult human stem cells with the perspective of clinical use (see e.g. \cite{Pittengeretal1999} who emphasize their potential for cartilage and bone reconstruction) is an ongoing fundamental challenge in current medical research, still with many open questions (e.g. \cite{Biancoetal2013}). To investigate mechanically guided differentiation, \emph{human mesenchymal stem cells} (hMSCs, pluripotent adult stem cells taken from the bone marrow) are placed on gels of varying elasticity, quantified by the Young's modulus, to mimic different environments in the human body, e.g. \cite{DischerJanmeyWang2005}. It is well known that within the first day the surrounding elasticity measured in kilopascal (kPa) induces differentiation through biomechanical cues, cf. \cite{EnglerSenSweeneyDisher2006, ZemelRehfeldBrownDischerSafran2010NatPhys}, where the changes manifest in orientation and ordering of the \emph{actin-myosin filament skeleton}. In particular, in order to direct future, more focused research, it is of high interest to more precisely identify time intervals in which such changes of ordering occur and to separate changes due to differentiation from changes due to other causes.
\begin{figure}[h!]
  \centering
  \subcaptionbox{Original}[0.48\textwidth]{\includegraphics[angle = 0, width=0.48\textwidth]{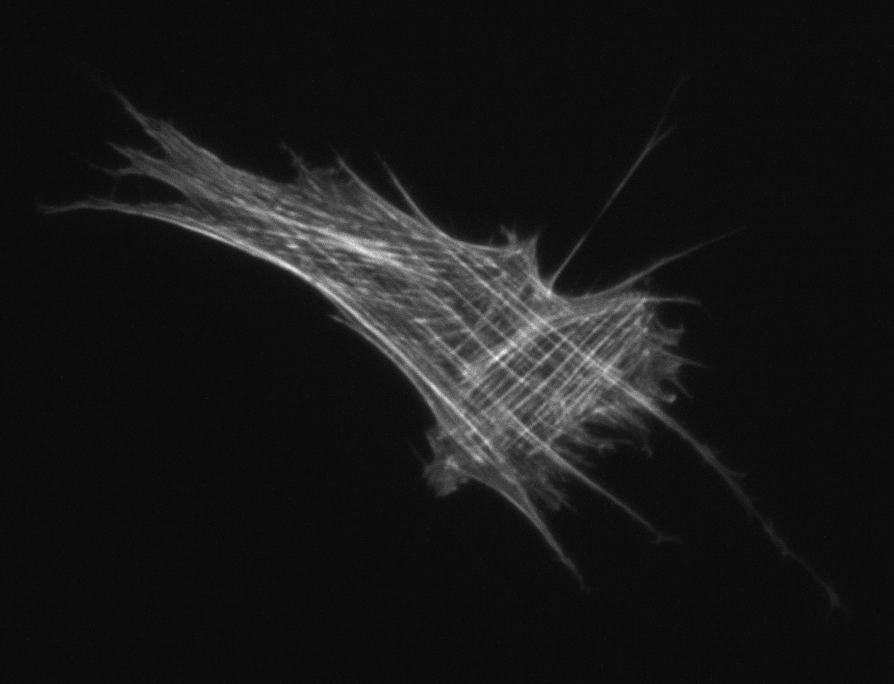}}
  \hspace*{0.02\textwidth}
  \subcaptionbox{All detected filaments}[0.48\textwidth]{\includegraphics[angle = 0, width=0.48\textwidth]{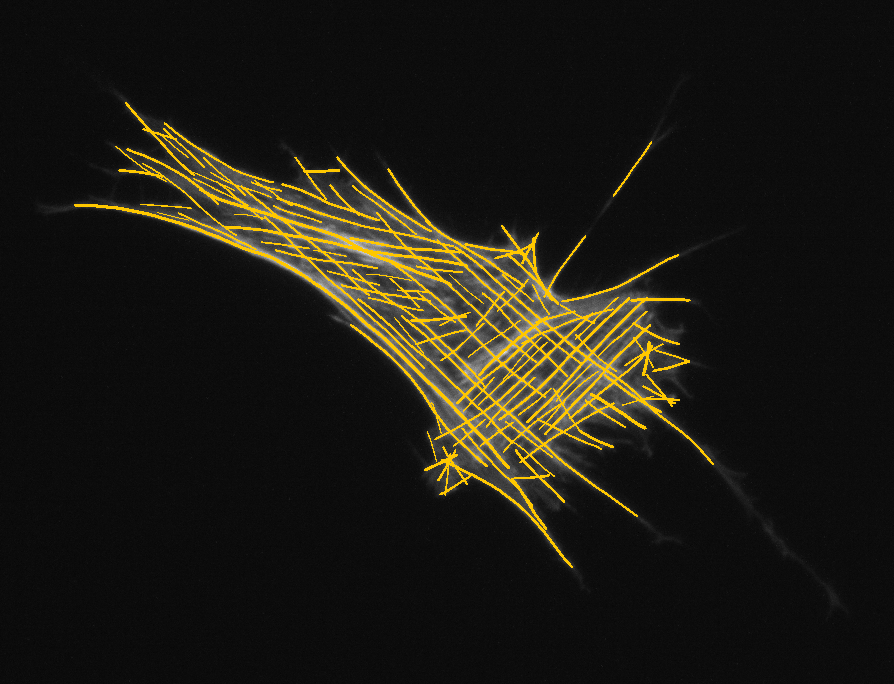}}\\
  \subcaptionbox{Main field}[0.48\textwidth]{\includegraphics[angle = 0, width=0.48\textwidth]{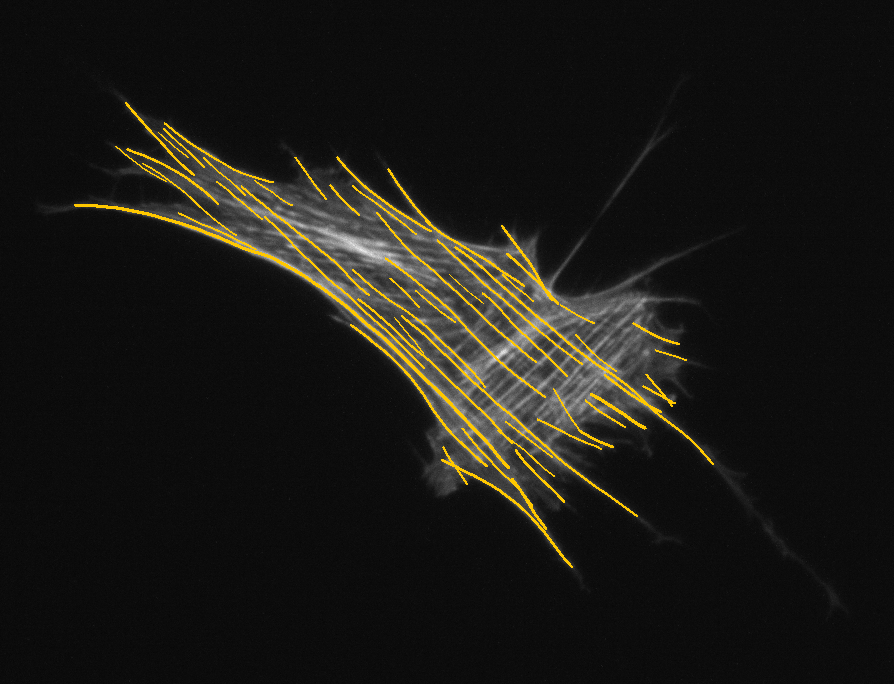}}
  \hspace*{0.02\textwidth}
  \subcaptionbox{Smaller fields and other filaments}[0.48\textwidth]{\includegraphics[angle = 0, width=0.48\textwidth]{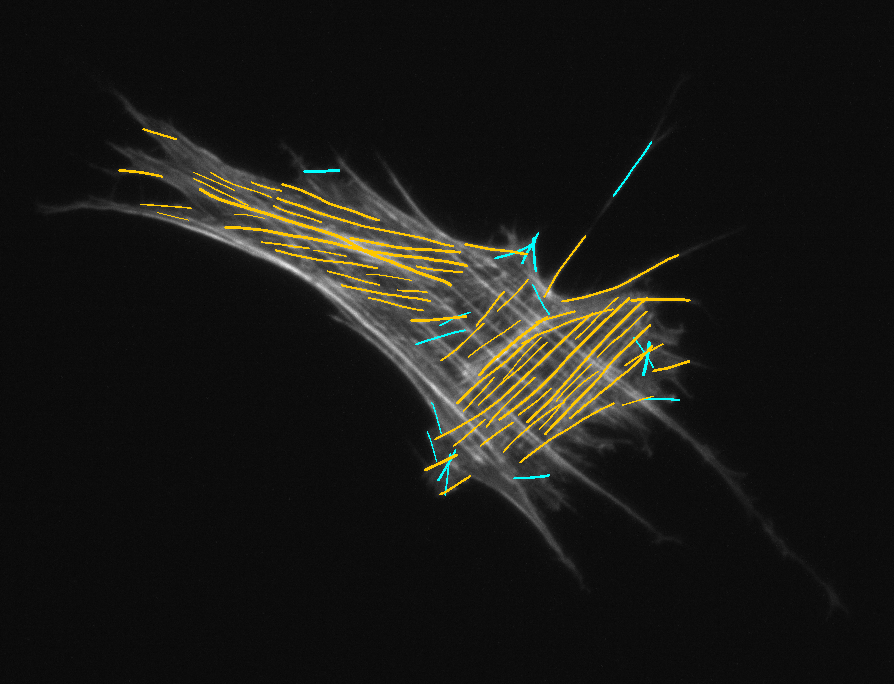}}
  \caption{\it (a): Fluorescence image of an immuno-stained human mesenchymal stem cell after cultivation for 16 hours at Young's modulus 10~kPa. (b): Automatically extracted filament structure using the Filament Sensor from \cite{EltznerWollnikGottschlichHuckemannRehfeldt2015}. (c): Filaments of the largest orientation field. (d): Filaments of smaller orientation fields (yellow) and filaments not belonging to any orientation field (cyan).}
  \label{fig:filament-sensor}
\end{figure}

\textbf{Experimental setup.}
We compare hMSC skeletons that have been cultured at the Third Institute of Physics of the University of G\"ottingen on gels with Young's moduli of 1~kPa mimicking neural tissue, 10~kPa mimicking muscle tissue, and 30~kPa mimicking bone tissue. The cells have been fixed after multiples of 4 hours on the respective gel and have then been immuno-stained for NMM IIa, the motor proteins making up small filaments that are responsible for cytoskeletal tension and imaged (as described in \cite{ZemelRehfeldBrownDischerSafran2010NatPhys}). Table \ref{table_samples} shows their sample sizes and the data will be published and made available after completion of current research, cf. \cite{WollnikRehfheltd2016}. Because earlier research (\cite{WiZer2016}) suggests that during the first 24 hours, 10~kPa and 30~kPa hMSCs develop rather similarly and quite differently from 1~kPa hMSCs, for this investigation, we pool the former.

\begin{table}[!ht]
  \caption{\textit{Sample sizes of hMSC skeleton images over varying Young's moduli and cultivation time.}}
  \begin{center}
    \begin{tabular}{|l|c|c|}
    \hline
    \rule{0pt}{2.6ex} Time & 1~kPa  & 10~kPa and 30~kPa \\ \hline
    \rule{0pt}{2.6ex}  4h  & 159    & 321 \\
    \rule{0pt}{2.6ex}  8h  & 163    & 317 \\
    \rule{0pt}{2.6ex} 12h  & 176    & 344 \\
    \rule{0pt}{2.6ex} 16h  & 135    & 274 \\
    \rule{0pt}{2.6ex} 20h  & 138    & 253 \\
    \rule{0pt}{2.6ex} 24h  & 166    & 304 \\ \hline
    \end{tabular}
  \end{center}
  \label{table_samples}
\end{table}

The actin-myosin filament structure has been automatically retrieved from the fluorescence images using the Filament Sensor from \cite{EltznerWollnikGottschlichHuckemannRehfeldt2015}. Since neighboring filaments share the same orientation, the 3D structure of the cellular skeleton can be retrieved by separating the filament structure into different orientation fields, cf. Figure \ref{fig:filament-sensor}.

\textbf{Orientation fields} for filament structures are determined via a relaxation labeling procedure, see \cite{Rosenfeld1976}. 
% The Java implementation of the Filament Sensor, including the relaxation
% labeling for orientation field detection, is available under a free open
% source license on the project's web page
% \url{http://www.stochastik.math.uni-goettingen.de/SFB755\_B8} .
The source code of our implementation is available as supplementary material. 
A detailed description is deferred to a future publication. The algorithm results in a set of contiguous areas with slowly varying local orientation, and, corresponding to each of these areas, a set of filaments which closely follow the local orientation. Also, these data will be published and made available after completion of current research, cf. \cite{WollnikRehfheltd2016}.

\textbf{Data analysis.}
For each single hMSC image, let $M$ be the number of pixels of all detected filaments, $m_1$ the number of all filament pixels of filaments of the largest orientation field and $m_2$ the number of all filament pixels of filaments of all smaller orientation fields. $M-m_1-m_2$ is then the number of pixels in all ``rogue'' filaments which are not associated to any field, because they are too inconsistent with neighboring filaments. Define $x=(x_1,x_2,x_3):= (\sqrt{m_1/M}, \sqrt{m_2/M}, \sqrt{1 - (m_1 + m_2)/M})^T \in Q=\SSS^2$ where the square roots ensure that $x$ does not describe relative areas but rather relative diameters of fields. This representation is confined to the $\SSS^2$ part in the first octant and every sample shows a distinct accumulation of points in the $x_2 = 0$ plane, corresponding to cells with only one orientation field. As common with biological data, especially from primary cells, their variance is rather high. In consequence, great circle fits are more robust under bootstrapping than small circle fits and we use the nested two-sample tests for PNGS with the following null hypothesis.
\begin{description}
\item[$H_0$:] hMSC orientation and ordering measured by random loci on $\SSS^2$ as above does not change between successive time points.
\end{description}

\begin{table}[!hb]
  \caption{\textit{Displaying p-values of two-sample tests for PNGS of filament orientation field distribution data. The test uses $B=1000$ bootstrap samples, therefore the penultimate p-value is $10^{-3}$.}}
  \begin{center}
    \begin{tabular}{|l|c|c||c|c|}
    \hline
    \rule{0pt}{2.6ex} Time     & \multicolumn{2}{|c||}{\rule{0pt}{2.6ex} nested great circle mean} & \multicolumn{2}{|c|}{\rule{0pt}{2.6ex} jointly great circle and nested mean} \\ \hline
    \rule{0pt}{2.6ex} Gel      & 1~kPa      & 10~kPa and 30~kPa & 1~kPa      & 10~kPa and 30~kPa  \\ \hline
    \rule{0pt}{2.6ex}   4h vs. 8h & $0.120$    & $<10^{-3}$               & $0.308$    & $<10^{-3}$ \\
    \rule{0pt}{2.6ex}  8h vs. 12h & $<10^{-3}$ & $<10^{-3}$               & $0.024$    & $<10^{-3}$ \\
    \rule{0pt}{2.6ex} 12h vs. 16h & $0.126$    & $<10^{-3}$               & $0.008$    & $<10^{-3}$ \\
    \rule{0pt}{2.6ex} 16h vs. 20h & $0.468$    & $0.626$                  & $0.494$    & $0.462$    \\
    \rule{0pt}{2.6ex} 20h vs. 24h & $<10^{-3}$ & $<10^{-3}$               & $<10^{-3}$ & $0.014$    \\ \hline
    \end{tabular}
  \end{center}
  \label{table_fields}
\end{table}

\textbf{Results.} As visible in Table \ref{table_fields}, while for hMSCs on harder gels (10~kPa and 30~kPa), nested means and the joint descriptor of nested mean and great circle change over each 4 hour interval until 16 hours -- for both the null hypothesis is rejected at the highest level possible -- similar changes are less clearly visible for hMSCs on the soft gel (1~kPa) between the intervals between 8 and 16 hours and not at all visible for the first time interval. Strikingly, for hMSCs on all gels, no changes seem to occur between 16 and 20 hours. In contrast, in the final interval between 20 and 24 hours, nested means and great circles clearly change for hMSCs on the soft gel -- rejecting the null hypothesis at the highest level possible. This effect is also there for the nested mean of hMSCs on the harder gels, but not as clearly visible for the joint descriptor including the circle. 

\begin{figure}[!ht]
  \centering
  \subcaptionbox{1~kPa\label{fields_1kpa}}[0.4\textwidth]{\includegraphics[width=0.4\textwidth, clip=true, trim=3cm 0cm 3cm 0cm]{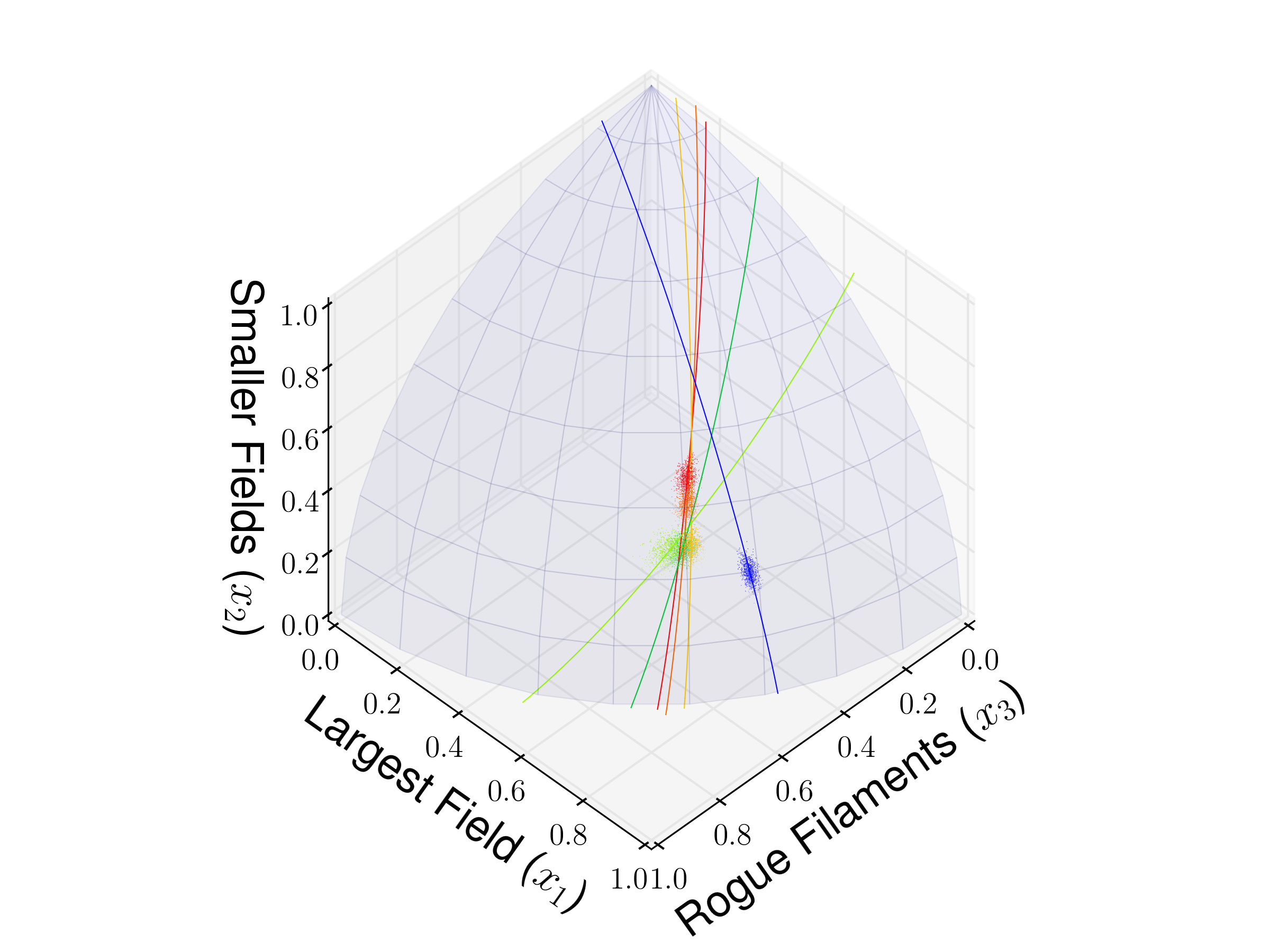}}
  \includegraphics[width=0.18\textwidth, clip=true, trim=4.5cm 0cm 4.5cm 1cm]{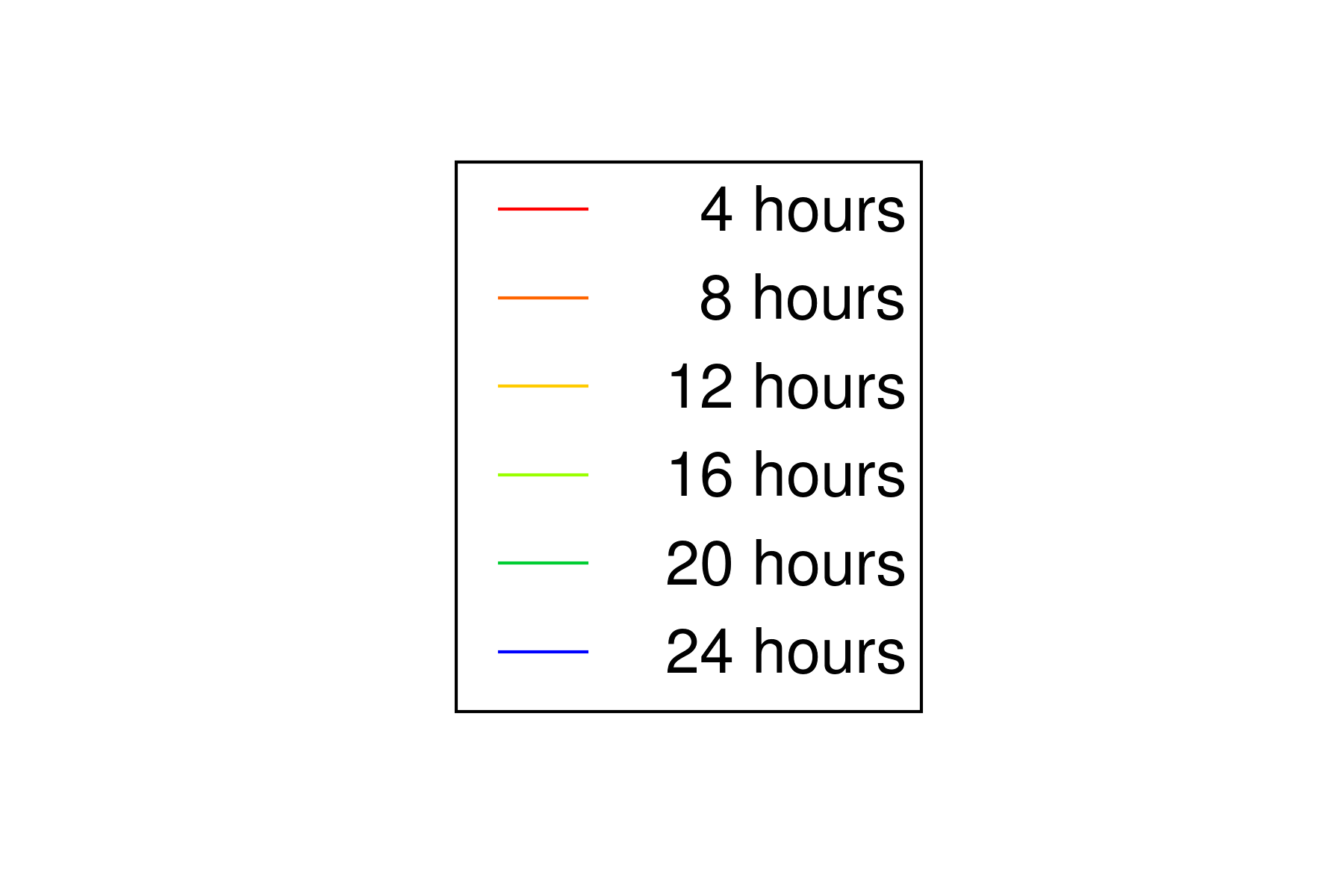}
  \subcaptionbox{10~kPa and 30~kPa\label{fields_pooled}}[0.4\textwidth]{\includegraphics[width=0.4\textwidth, clip=true, trim=3cm 0cm 3cm 0cm]{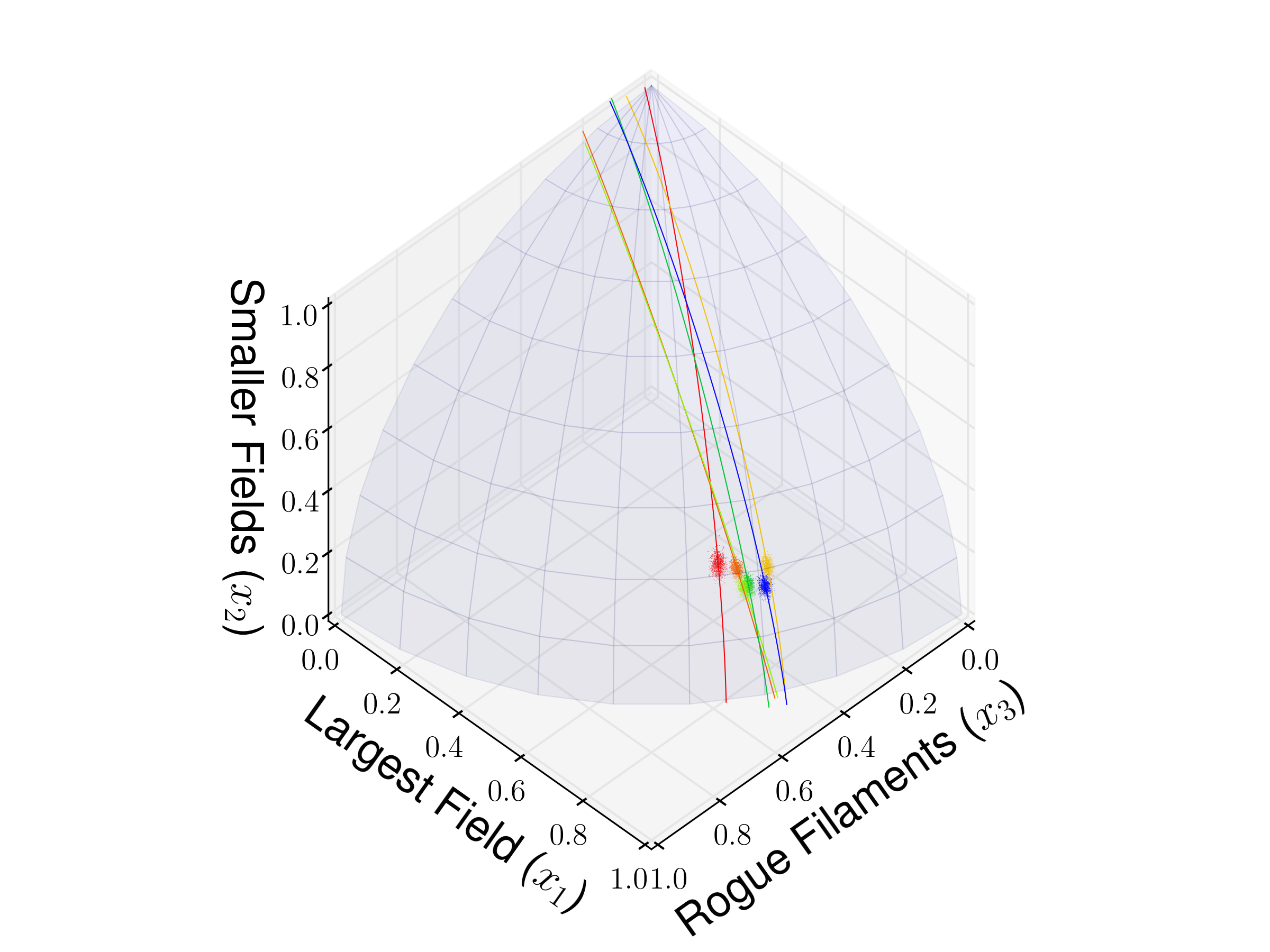}}
  \caption{\it Spherical representations of bootstrapped BNFDs (nested means on mean great circles) for the four data set at 6 time points. %The colors range from red for 4~hours to blue for 24~hours.
  \label{fig_fields}}
\end{figure}

\begin{table}[!ht]
  \caption{\textit{Displaying p-values of two-sample tests for PNGS of filament orientation field distribution data for all time points. We use $B=1000$ bootstrap samples, thus the penultimate p-value is $10^{-3}$.}}
  \begin{center}
    \begin{tabular}{|l|c|c|}
    \hline  
    Gels &  \multicolumn{2}{|c|}{\rule{0pt}{2.6ex}1~kPa vs. 10~kPa and 30~kPa}\\ \hline
    Time & nested great circle mean &  jointly great circle and nested mean\\ \hline
      4 h& $< 10^{-3}$ & $< 10^{-3}$\\
     10 h& $< 10^{-3}$ & $< 10^{-3}$\\
     16 h& $< 10^{-3}$ & $< 10^{-3}$\\
     20 h& $< 10^{-3}$ & $< 10^{-3}$\\
     24 h& $0.010$ & $0.061 $\\ \hline
    \end{tabular}
\end{center} 
\label{24hrs_table}
\end{table}

Visualization in Figure \ref{fig_fields} reveals further details. As seen from the loci of the nested means, hMSCs on the soft gel (Figure \ref{fields_1kpa}) tend to loose minor orientation field filaments with a nearly constant ratio of large orientation field filaments and rogue filaments until the \emph{critical slot}, the time interval between 16 and 20 hours. Their great circles, indicating the direction of largest spread, change at the beginning of the critical slot, suggesting that the major variation there occurs in the amount of rogue filaments. While, until the critical slot, the temporal motion of nested means for 1~kPa is mainly vertical, the corresponding motion for the hMSCs on harder gels (cf. Figure \ref{fields_pooled}) is horizontal, indicating that the number of rogue filaments decreases in favor of the main orientation field. Curiously for the nested means, there is also a sharp drop in height at the beginning of the critical slot as well as a backward horizontal motion. After the critical slot, hMSCs seem to continue the direction of their previous journey, at a lower smaller fields' level, though. In contrast, for the hMSCs on the soft gel, the critical slot seems to represent a true change point since afterward, the nested mean travels not much longer towards reducing the smaller fields, but like hMSCs on harder gels, mainly reduces the number of rogue filaments. Indeed, taking into account the auxiliary mesh lines, it can be seen that descriptors are rather close at time 24 hours, cf. Table \ref{24hrs_table}, where, in contrast they are rather far away from each other for all other time points.

\textbf{Discussion.} We conclude that hMSCs react clearly distinctly and differently on both gels already for short time intervals, where at the critical time slot some kind of reboot happens. A generic candidate for this effect is cell division. As all cells used in the experiments were thawed at the same time (72 hours before seeding) and treated identically, cell division is expected to occur at similar (at least for each environment) time points. Dividing cells completely reorganize their cell skeleton which would explain the change point found. In particular, it seems that due to cell division, the time point 24 hours (as used in \cite{ZemelRehfeldBrownDischerSafran2010NatPhys}) may not be ideal if differences in hMSCs differentiation due to different Young's moduli are to be detected. Our results clearly warrant further analysis using higher time resolution, in particular time resolved in-vivo imaging, that among others, allow to register cell division times.

% \section{Discussion}\label{outlook:scn}
\section*{Acknowledgment}

We thank Rabi Bhattacharya and Vic Patrangenaru for their valuable comments on the bootstrap and our collaborators Florian Rehfeldt and Carina Wollnik for their stem cell data. The authors also gratefully acknowledge DFG HU 1575/4, DFG CRC 755 and the Niedersachsen Vorab of the Volkswagen Foundation.

\section{Appendix}

\subsection{Completing the Proof of Theorem \ref{SC:thm}}

We continue to use the notation introduced in the sketch of the proof right after Theorem \ref{SC:thm}, in particular, recall that $p$ is the last element in $f$. First we show a crucial Lemma. 
\begin{Lem}\label{basic-Ziezold:lem}
  Fix $s\in S_p$. Then there is a measurable set $\Omega_s \subset \Omega$ with $\Prb(\Omega_s) = 1$ such that
  \begin{align*}
    F_{n,f_n}(s^{p_n}) \to F_f(s)\mbox{ and } \frac{1}{n}\sum_{i=1}^n \rho_{p_n}(\pi_{f_n}\circ X_i,s^{p_n}) \to \mathbb E[\rho_{p}(\pi_{f}\circ X,s)] %\quad a.s.
  \end{align*}
\end{Lem}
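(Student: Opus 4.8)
The plan is to reduce everything to the strong law of large numbers applied to the deterministic pair $(f,s)$, and then to transfer the conclusion to the random pair $(f_n,s^{p_n})$ using the uniform-continuity Assumption~\ref{uniform-link:as} together with the fact that the projection $s\mapsto s^{p_n}$ is eventually well defined and close to $s$ (Assumptions~\ref{projection-unique:as} and~\ref{projection-close:as}). Concretely, I would define $\Omega_s$ as the intersection of three probability-one events: the event on which the SLLN yields $\frac1n\sum_{i=1}^n\rho_p(\pi_f\circ X_i,s)^2\to\EE[\rho_p(\pi_f\circ X,s)^2]=F_f(s)$; the event on which it yields $\frac1n\sum_{i=1}^n\rho_p(\pi_f\circ X_i,s)\to\EE[\rho_p(\pi_f\circ X,s)]$; and the event $\Omega'$ furnished by the induction hypothesis~(\ref{BP-SC:eq}) for the levels $j+1,\dots,m$, on which $d_l(p_n^l,p^l)\to 0$ for all these $l$, hence $d(f_n,f)\to 0$. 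Both applications of the SLLN are legitimate since $\EE[\rho_p(\pi_f\circ X,s)^2]<\infty$ by Assumption~\ref{Ass-meas-exists:as}, which forces $\EE[\rho_p(\pi_f\circ X,s)]<\infty$ by Cauchy--Schwarz, and since $\rho_p$ is continuous while $\pi_f$ is measurable; note that $s^{p_n}$, being the unique element of $S_{p_n}^s$ once $p_n$ is close to $p$, is automatically a measurable function of $p_n$.

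Working on $\Omega_s$, I would next observe that $d_{j+1}(p_n,p)\to 0$, so by Assumption~\ref{projection-unique:as} the element $s^{p_n}\in S_{p_n}$ is well defined for all $n$ large, and by Assumption~\ref{projection-close:as} we get $d_j(s,s^{p_n})\to 0$. Hence $(f_n,s^{p_n})\to(f,s)$ in the relevant sense, and Assumption~\ref{uniform-link:as} applies: for every $\epsilon>0$ there is a (random, $\omega$-dependent) $N$ such that for all $n\ge N$,
\[
  \sup_{q\in Q}\bigl|\rho_p(\pi_f(q),s)-\rho_{p_n}(\pi_{f_n}(q),s^{p_n})\bigr|\le\epsilon .
\]
Evaluating at $q=X_i$ and averaging gives, for $n\ge N$,
\[
  \Bigl|\tfrac1n\sum_{i=1}^n\rho_{p_n}(\pi_{f_n}\circ X_i,s^{p_n})-\tfrac1n\sum_{i=1}^n\rho_p(\pi_f\circ X_i,s)\Bigr|\le\epsilon ,
\]
and, using $|a^2-b^2|=|a-b|\,|a+b|\le\epsilon\,(2\rho_p(\pi_f\circ X_i,s)+\epsilon)$,
\[
  \Bigl|\tfrac1n\sum_{i=1}^n\rho_{p_n}(\pi_{f_n}\circ X_i,s^{p_n})^2-\tfrac1n\sum_{i=1}^n\rho_p(\pi_f\circ X_i,s)^2\Bigr|\le\epsilon\Bigl(\tfrac2n\sum_{i=1}^n\rho_p(\pi_f\circ X_i,s)+\epsilon\Bigr).
\]

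Finally I would let $n\to\infty$. On $\Omega_s$ the right-hand sides converge to $\epsilon$ and to $\epsilon\,(2\EE[\rho_p(\pi_f\circ X,s)]+\epsilon)$, while the ``$p$-only'' averages converge to $\EE[\rho_p(\pi_f\circ X,s)]$ and to $F_f(s)$; a triangle inequality then bounds the $\limsup_n$ of each target discrepancy by a quantity that tends to $0$ as $\epsilon\downarrow 0$, which gives the two asserted convergences. Since $\Omega_s$ is a finite intersection of probability-one events, $\Prb(\Omega_s)=1$. \textbf{The step requiring the most care} is exactly the transfer from $(f,s)$ to $(f_n,s^{p_n})$: it is essential that the modulus of continuity in Assumption~\ref{uniform-link:as} is \emph{uniform} in $q\in Q$, so that a single deterministic $N=N(\epsilon,\omega)$ works for all summands simultaneously; the remaining ingredients are the SLLN and the elementary identity $|a^2-b^2|\le|a-b|(|a|+|b|)$. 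Note that Assumption~\ref{almost-triangle:as} is not needed for this lemma, only the uniqueness and closeness of the projection $s\mapsto s^{p_n}$.
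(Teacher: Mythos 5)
Your proposal is correct and follows essentially the same route as the paper: the paper likewise expands $\rho_{p_n}(\pi_{f_n}\circ X_i,s^{p_n})^2=\big(\rho_p(\pi_f\circ X_i,s)+h_i\big)^2$, makes the increment $h_i$ uniformly small via Assumptions~\ref{projection-close:as} and~\ref{uniform-link:as} together with the induction hypothesis $d(f,f_n)\to 0$ a.s., and concludes with the strong law applied to $\frac1n\sum\rho_p(\pi_f\circ X_i,s)^2$ and $\frac1n\sum\rho_p(\pi_f\circ X_i,s)$. Your additional remarks on integrability via Cauchy--Schwarz, measurability of $s^{p_n}$, and the dispensability of Assumption~\ref{almost-triangle:as} are all consistent with the paper's argument.
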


\begin{proof}
  \begin{align*}
    F_{n,f_n}(s^{p_n}) &= \frac{1}{n}\sum_{i=1}^n \rho_{{p_n}}(\pi_{f_n}\circ X_i,s^{p_n})^2\\
    &= \frac{1}{n}\sum_{i=1}^n \rho_{{p}}(\pi_{f}\circ X_i,s)^2 + \frac{2}{n}\sum_{i=1}^n \rho_{{p}}(\pi_{f}\circ X_i,s) h_i(f,f_n,s,s^{p_n}) + \frac{1}{n}\sum_{i=1}^n h_i(f,f_n,s,s^{p_n})^2 
  \end{align*}
  with
  \begin{align*}
    h_i(f,f_n,s,s^{p_n}) = \rho_{{p_n}}(\pi_{f_n}\circ X_i,s^{p_n}) - \rho_{{p}}(\pi_{f}\circ X_i,s)
  \end{align*}
  which, in conjunction with Assumption \ref{projection-close:as} and the induction hypothesis $d(f,f_n) \to 0$ a.s. can be made arbitrary small a.s. due to Assumption \ref{uniform-link:as}. In consequence of the usual strong law the first assertion follows. The second follows with the same argument.
\end{proof}

\textbf{Showing (\ref{Ziezold-sc:eq}).}

Having established Lemma \ref{basic-Ziezold:lem}, in principle we can now follow the steps laid out by \cite{Z77}. They are, however, more intrigued in our endeavor. By hypothesis we have $d(f_n,f) \to 0$ a.s.

By separability of $P_j$ it follows at once from Lemma \ref{basic-Ziezold:lem} that there is a measurable set $\Omega'\subset \Omega$ with $\mathbb P(\Omega')=1$ and a dense subset $\{s_k:k\in \mathbb N\} \subset S_p$ such that
\begin{align}\label{dense-as-conv1:eq}
  \left.\begin{array}{l}F_{n,f_n}(s^{p_n}) \to F_f(s)\mbox{ and }\\ \frac{1}{n}\sum_{i=1}^n \rho_{{p_n}}(\pi_{f_n}\circ X_i,s^{p_n}) \to \mathbb E[\rho_{p}(\pi_{f}\circ X_i,s)]\end{array}\right\} \quad \forall s \in \{s_k:k\in \mathbb N\}\mbox{ and }\omega\in \Omega'\,.
\end{align}

In order to obtain (\ref{dense-as-conv1:eq}) for all $s\in S_p$, consider $s_n,s'_n \in S_{p_n}$ and the following estimates.
\begin{align}\label{dense-as-conv2:ineq}
  \MoveEqLeft \left|F_{n,f_n}(s'_n) - F_{n,f_n}(s_n)\right|\nonumber\\ 
  &\leq \frac{1}{n}\sum_{i=1}^n \left(\rho_{p_n}(\pi_{f_n}\circ X_i,s'_n) + \rho_{p_n}(\pi_{f_n}\circ X_i,s_n)\right)
  \left|\rho_{p_n}(\pi_{f_n}\circ X_i,s'_n) - \rho_{p_n}(\pi_{f_n}\circ X_i,s_n)\right| \nonumber\\
  &=\left\{
  \begin{array}{l}
    \frac{1}{n}\sum_{i=1}^n \left(2\rho_{p_n}(\pi_{f_n}\circ X_i,s'_n) + g_{p_n}(\pi_{f_n}\circ X_i,s_n,s'_n)\right)
    \big|g_{p_n}(\pi_{f_n}\circ X_i,s_n,s'_n)\big|\\ 
    \frac{1}{n}\sum_{i=1}^n \left(2\rho_{p_n}(\pi_{f_n}\circ X_i,s_n) + g_{p_n}(\pi_{f_n}\circ X_i,s'_n,s_n)\right)\big|g_{p_n}(\pi_{f_n}\circ X_i,s_n,s'_n)\big|
  \end{array}
  \right.
\end{align}
with
\begin{align*}
  g_{p_n}(\pi_{f_n}\circ X_i,{s'}^{p_n},s^{p_n}) = \rho_{p_n}(\pi_{f_n}\circ X_i,{s'}^{p_n}) - \rho_{p_n}(\pi_{f_n}\circ X_i,{s}^{p_n})\,.
\end{align*}
Now, w.l.o.g., consider $s_k \to s$ (which implies that $d_j(s_k,s) \to 0$) for which (\ref{dense-as-conv1:eq}) is valid. Using twice the first line in (\ref{dense-as-conv2:ineq}) for $s_n = s^{p_n}$ and $s'_n=s^{p_n}_k$ we obtain
\begin{gather*}
  F_{n,f_n}({s_k}^{p_n}) - \frac{1}{n}\sum_{i=1}^n \left(2\rho_{p_n}(\pi_{f_n}\circ X_i,{s_k}^{p_n}) + g_{p_n}(\pi_{f_n}\circ X_i,s^{p_n},{s_k}^{p_n})\right)
  \big|g_{p_n}(\pi_{f_n}\circ X_i,s^{p_n},{s_k}^{p_n})\big|\\
  \leq F_{n,f_n}({s}^{p_n}) \leq \\
  F_{n,f_n}({s_k}^{p_n}) + \frac{1}{n}\sum_{i=1}^n \left(2\rho_{p_n}(\pi_{f_n}\circ X_i,{s_k}^{p_n}) + g_{p_n}(\pi_{f_n}\circ X_i,s^{p_n},{s_k}^{p_n})\right)
  \big|g_{p_n}(\pi_{f_n}\circ X_i,s^{p_n},{s_k}^{p_n})\big|
\end{gather*}
Due to Assumption \ref{uniform-link:as} and the strong law (\ref{dense-as-conv1:eq}) (and the argument applied in the proof of Lemma \ref{basic-Ziezold:lem}), for every $\epsilon>0$ there is $K=K(\epsilon)\in \mathbb N$ such that for all $k\geq K$ we have
\begin{align*}
  F_f(s_k) - 2\big(\mathbb E[\rho_p(\pi_f\circ X,s_k)] + \epsilon\big)\epsilon & \leq\lim\inf_{n\to \infty} F_{n,f_n}({s}^{p_n})\\
  &\leq \lim\sup_{n\to \infty} F_{n,f_n}({s}^{p_n})\\
  &\leq F_f(s_k) + 2\big(\mathbb E[\rho_p(\pi_f\circ X,s_k)] + \epsilon\big)\epsilon
\end{align*}
for all $\omega \in \Omega'$. Taking into account the continuity of $F_f$, letting $\epsilon \to 0$ yields
\begin{align}\label{as-conv3:eq}
  F_{n,f_n}(s^{p_n}) \to F_f(s) \quad \forall s \in S_p\mbox{ and }\omega\in \Omega'\quad\mbox{ for }n\to \infty\,.
\end{align}
Similarly we see that
\begin{align}\label{as-conv4:eq}
  \frac{1}{n}\sum_{i=1}^n \rho_{p_n}(\pi_{f_n}\circ X_i,s^{p_n}) \to \mathbb E[\rho_p(\pi_{f}\circ X_i,s)]\quad \forall s \in S_p\mbox{ and }\omega\in \Omega'\quad\mbox{ for }n\to \infty\,.
\end{align}

Next, we consider a sequence $S_{p_n}\ni s_n \to s\in S_p$. Note that in consequence of Assumption \ref{almost-triangle:as} we have that
\begin{align*}
  d_j(s_n,s^{p_n}) \to 0\,.
\end{align*}
Using the bottom line of (\ref{dense-as-conv2:ineq}) yields that
\begin{align*}
  \MoveEqLeft\big|F_{n,f_n}(s_n) - F_{n,f_n}(s^{p_n})\big|\\
  &\leq \frac{1}{n}\sum_{i=1}^n \left(2\rho_{p_n}(\pi_{f_n}\circ X_i,{s}^{p_n}) + g_{p_n}(\pi_{f_n}\circ X_i,s_n,{s}^{p_n})\right)
  \big|g_{p_n}(\pi_{f_n}\circ X_i,s_n,{s}^{p_n})\big|
  &\to 0\quad \forall \omega\in \Omega'
\end{align*}
with the same $\Omega'$ for all $s\in S_p$ due to (\ref{as-conv4:eq}). Hence, in consequence of this and (\ref{as-conv3:eq}), for all $S_{p_n}\ni s_n \to s\in S_p$ we have that
\begin{align}\label{as-conv5:eq}
  \big|F_{n,f_n}(s_n) - F_f(s)\big| \leq \big|F_{n,f_n}(s_n) - F_{n,f_n}(s^{p_n})\big| + \big| F_{n,f_n}(s^{p_n})- F_f(s)\big| \to 0\,.
\end{align}
for all $\omega\in \Omega'$.

Finally let us show
\begin{align}\label{Ziez3:eq}
  \mbox{if $\cap_{n=1}^\infty \overline{\cup_{k=n}^\infty E^{f_k}_k}\neq \emptyset$ then $\lw_{n,f_n} \to \lw_f$ a.s.}
\end{align}
Note that Assertion (\ref{Ziezold-sc:eq}) is trivial in case of $\cap_{n=1}^\infty \overline{\cup_{k=n}^\infty E_k^{f_k}}=\emptyset$. Otherwise, for ease of notation let $B_n := \cup_{k=n}^\infty E^{f_k}_k$, $\overline{B_n}\searrow B:=\cap_{n=1}^\infty \overline{B_n}$, $b \in B$. Then $b \in \overline{B_n}$ for all $n\in\mathbb N$. Hence, there is a sequence $b_n \in B_n$, $b_n \to b$. Moreover, there is a sequence $k_n$ such that $b_n =s_{k_n} \in E^{f_{k_n}}_{k_n}$ for a suitable $k_n \geq n$. Then $\lw_{n_k,f_{n_k}}=F_{n_k,f_{n_k}}(s_{n_k})\to F_f(b)\geq \lw$ by (\ref{as-conv5:eq}) a.s.. On the other hand, by Lemma \ref{basic-Ziezold:lem} for arbitrary fixed $s\in S_p$, there is a sequence $\epsilon_{n}\to 0$ such that $F_f(s)\geq F_{n,f_n}(s^{p_n})- \epsilon_{n}\geq \lw_{n,f_n} - \epsilon_{n}$. First letting $n\to \infty$ and then considering the infimum over $s\in S_p$ yields
\begin{align*}
  \lw_f \geq\limsup_{n\to\infty} \lw_{n,f_n}\,.
\end{align*}
In consequence
\begin{align}\label{lw-lim-sup:ineq}
  \limsup_{n\to\infty}\lw_{n,f_n} \geq F_f(b)\geq \lw_f \geq \limsup_{n\to\infty}\lw_{n,f_n} \mbox{ a.s.}
\end{align}
In particular we have shown that $\lw_f =F_f(b)$ which means that $b=s^*$ thus completing the proof of (\ref{Ziezold-sc:eq})

\textbf{Proof of (\ref{BP-SC:eq}).}
Using the notation of the previous proof of (\ref{Ziezold-sc:eq}), let $s_n\in S_{p_n}$ and consider $r_n = d_j(s^*,s_n)$. If the assertion (\ref{BP-SC:eq}) was false, there would be a measurable set $A\subset \Omega$ with $\Prb(A)>0$ such that for every $\omega\in A$ there is $r_0(\omega) > 0$ and $r_n(\omega)\geq r_0(\omega)>0$.

First, we claim that $\Prb(B)=0$ with $B=\{\omega \in A: s_n(\omega)\mbox{ has a cluster point}\}$. 

For if $\omega \in B$ with $s_n(\omega) \to \widetilde{s}(\omega)\in S_p$, then by continuity $d_j(s^*(\omega),\widetilde{s}(\omega))\geq r_0(\omega)>0$ which in conjunction with (\ref{Ziezold-sc:eq}),
\begin{align*}
  \widetilde{s}\in \bigcap_{n=1}^\infty \overline{\bigcup_{k=n}^\infty E^{f_k}_k} \subset \{s^*\}\quad a.s\,.
\end{align*}
implies that $\Prb(B)=0$.

In consequence of the Heine Borel property we have thus $r_n \to \infty $ for all $\omega \in A\setminus B$.
Since $\E[\rho_{p}(\pi_f\circ X,s)^2] <\infty$ for all $s\in S_p$, there is a $C>0$ such that
\begin{align*}
  \Prb\{\omega\in A: \rho_{p}(\pi_f\circ X,s)<C\}>0
\end{align*}
Hence, in consequence of Assumption \ref{coercivity:as} we have thus a subset $A'\subset A\setminus B$ with $\Prb(A') >0$ such that for all $\omega\in A'$, due to the usual strong law,
\begin{align*}
  \lw \geq \lw_{n,f_n} = F_{n,f_n}(s_n) \geq \frac{1}{n}\sum_{i=1}^{n} 1_{\{\omega\in A:\rho_{p}( \pi_p\circ X_{i}(\omega),s)<C\}}~\rho_{{p_n}}( \pi_{f_n}\circ X_{i}(\omega),s_n)^2 ~\to~\infty~a.s.
\end{align*}
This is a contradiction to (\ref{lw-lim-sup:ineq}). This yields (\ref{BP-SC:eq}) completing the proof of Theorem \ref{SC:thm}.

% \bibliographystyle{Chicago}
% \bibliography{superbib}
% \bibliographystyle{../../../BIB/Chicago}
% \bibliography{../../BIB/diffgeo,../../BIB/circular,../../BIB/stats,../../BIB/trees,../../BIB/machine,../../BIB/numerics,../../BIB/finger,../../BIB/shape,../../BIB/complex,../../BIB/zytos,../../BIB/comp_vis}

\end{document}